\title{Gulbrandsen--Halle--Hulek degeneration and Hilbert--Chow morphism}
\author{Yasunari Nagai}
\address{
Department of Mathematics, Faculty of Science and Engineering,
Waseda University,
3-4-1 Ohkubo, Shinjuku, Tokyo 169-8555, Japan}
\email{nagai.y@waseda.jp}
\theoremstyle{plain}
\newtheorem{theorem}{Theorem}[subsection]
\newtheorem{lemma}[theorem]{Lemma}
\newtheorem{proposition}[theorem]{Proposition}
\newtheorem{assumption}[theorem]{Assumption}
\newtheorem*{theorem*}{Theorem}
\newtheorem*{corollary*}{Corollary}
\newtheorem*{MainTheorem}{Main Theorem}
\theoremstyle{definition}
\newtheorem{example}[theorem]{Example}
\theoremstyle{remark}
\newtheorem{remark}[theorem]{Remark}
\DeclareSymbolFont{cmletters}{OML}{cmm}{m}{it}
\DeclareSymbolFont{cmsymbols}{OMS}{cmsy}{m}{n}
\DeclareSymbolFont{cmlargesymbols}{OMX}{cmex}{m}{n}
\DeclareMathSymbol{\myjmath}{\mathord}{cmletters}{"7C}
\DeclareMathSymbol{\myamalg}{\mathbin}{cmsymbols}{"71}
\DeclareMathSymbol{\mycoprod}{\mathop}{cmlargesymbols}{"60}
\let\jmath\myjmath
\let\coprod\mycoprod
\def\lto{\longrightarrow}
\DeclareMathOperator{\Hilb}{Hilb}
\DeclareMathOperator{\Sym}{Sym}
\DeclareMathOperator{\Proj}{Proj}
\DeclareMathOperator{\pr}{pr}
\DeclareMathOperator{\id}{id}
\DeclareMathOperator{\Ker}{Ker}
\DeclareMathOperator{\Sing}{Sing}
\DeclareMathOperator{\Stab}{Stab}
\def\git{/\!\!/}
\newcommand{\Vspc}{\vphantom{\vdots}}
\def\@seccntformat#1{%
  \protect\textup{\protect\@secnumfont
    \ifnum\pdfstrcmp{subsection}{#1}=0 \bfseries\fi% subsection # in \bfseries
    \csname the#1\endcsname
    \protect\@secnumpunct
  }%
}
\begin{document}

\baselineskip 17pt
\parskip 5pt

\begin{abstract}
For a semistable degeneration of surfaces without a triple point,
we show that two models of degeneration of Hilbert scheme of points of the family,
Gulbrandsen-Halle-Hulek degeneration given in \cite{GHH} and
the one given by the author in \cite{N}, are actually
isomorphic.
\end{abstract}

\maketitle
\vspace{-1cm}

%%%%%%%%%%%
\section*{Introduction}

The Hilbert scheme of points on a surface appears as an interesting object
in many branches of mathematics, such as holomorphic symplectic geometry, differential
geometry, singularity theory, representation theory, and so on.
If one wants to study a moduli behavior of Hilbert scheme of points on surfaces,
it is natural to ask for a good model of degenerating family of Hilbert schemes.

For the first sight, one might regard this question a triviality;
for a semistable degeneration $S\to C$ of quasi-projective surfaces,
shouldn't the relative Hilbert scheme of points $\Hilb ^n(S/C)\to C$ do the work?
However, even though the family satisfies several good properties such as
unipotency of monodromy operators on the cohomology groups,
the singular fiber of $\Hilb ^n(S/C)\to C$ can be quite singular.
In fact, it is not quite clear how to cut out the `main component' of
the relative Hilbert scheme. Moreover, even in the case of $n=2$, the relative Hilbert
scheme is not a minimal model in the sense of
higher dimensional birational geometry \cite{N08}.
Therefore, a search for a minimal model that is very near to being semistable
as a family over the base curve $C$ is
a non-trivial problem.

At the time of writing, there are at least two approaches to the problem.
One is an approach of Gulbrandsen-Halle-Hulek \cite{GHH} based on the notion of
\emph{expanded degeneration} due to Jun Li. They associate to the family $S\to C$
a family of expanded degeneration $S[n]\to \mathbb A^{n+1}$,
consider the relative Hilbert scheme $\Hilb ^n(S[n]/\mathbb A^{n+1})$ of
the expanded degeneration,
and define $I^n_{S/C}$ to be a GIT quotient $\Hilb ^n(S[n]/\mathbb A^{n+1})^{ss}\git G[n]$
for a natural action of $G[n]\cong \mathbb (\mathbb C^*)^n$ with a certain linearization.
We call the family $I^n_{S/C}\to C$ \emph{Gulbrandsen-Halle-Hulek degeneration}
(GHH degeneration as a shorthand). The other construction is in the previous work of the author \cite{N};
it works in a local situation that $S=\mathbb A^3 \to C=\mathbb A^1$ is given by
$(x,y,z)\mapsto t=xy$, and
analyzes the local structure of the singularities of
the relative symmetric product $\Sym ^n(S/C)$. We construct a $\mathbb Q$-factorial terminalization
$Y^{(n)}\to \Sym ^n(S/X)$ explicitly; first we consider a small projective toric resolution
$\tilde Z^{(n)}$ of the relative self-product $(S/C)^n$ and define $Z^{(n)}=\tilde Z^{(n)}/\mathfrak S_n$.
$Y^{(n)}$ is given as a crepant divisorial partial resolution of $Z^{(n)}$.

Each approach has its own merit; the construction of GHH degeneration is global in nature.
Gulbrandsen et. al. clarified the necessary and sufficient condition that the GHH degeneration
$I^n_{S/C}$ be projective over the base and analyzes the combinatorial properties of the degenerate
fiber. On the other hand, the approach of \cite{N} clarifies the local singularities along the singular fiber
in every step of the construction of the minimal model.

Now, another natural question is to ask the relationship between these two models.
The main theorem of this article is the following:

\begin{MainTheorem}[=Theorem \ref{comparison theorem}]
  GHH degeneration $I^n_{S/C}$ and $Y^{(n)}$ are isomorphic to each other
  as a family over $C$.
\end{MainTheorem}

The main device to prove the theorem is \emph{Hilbert-Chow morphism};
we have a natural relative Hilbert-Chow morphism
\[
\Hilb ^n(S[n]/\mathbb A^{n+1})^{ss}\to \Sym ^n(S[n]/\mathbb A^{n+1}),
\]
which is $G[n]$-equivariant. Taking GIT quotient by $G[n]$,
we get a birational morphism
\[
I^n_{S/C}=\Hilb ^n(S[n]/\mathbb A^{n+1})^{ss}\git G[n]
\to \Sym ^n(S[n]/\mathbb A^{n+1})^{ss}\git G[n].
\]
The main technical claim is that the quotient stack $[\Sym ^n(S[n]/\mathbb A^{n+1})^{ss}/G[n]]$
is isomorphic to $[\tilde Z^{(n)}/\mathfrak S_n]$ (Theorem \ref{thm: comparison of stabilizers}).
We prove this claim relying on toric geometry, in particular a description of torus quotient of
a semi-projective toric variety via polyhedron. In the process, we will see that
the choice of linearization that Gulbrandsen--Halle--Hulek made (we call it GHH linearization)
is very natural also with a view toward toric--combinatorial aspect of the theory.

\section{Toric blowing-up and its GIT quotient}

\subsection{Toric variety via polyhedron}
First we review the description of a semi-projective toric varieties using
lattice polyhedra. For details, we refer \cite{CLS}, Chapter 7.

Let $T=(\mathbb C^*)^n$ be a torus, $M=\mathbb Z^n$ a character lattice of $T$,
and $N=M^{\vee}$ a lattice of one-parameter subgroups of $T$.
Let $\tilde P$ be a \emph{lattice polyhedron} on $M$ (\emph{op. cit.} Definition 7.1.3).
Then, $\tilde P$ is a Minkowski sum of a lattice polytope $P$ and a strongly convex
rational polyhedral cone $C$,
the \emph{recession cone} of $\tilde P$.
The normal fan $\Sigma _{\tilde P}$ defines a toric variety $X_T(\tilde P)=X(\tilde P)$.
The dual cone $\sigma =C^{\vee}\subset N_{\mathbb R}$ to the recession cone
may not be strongly convex, while $\sigma$ is the union of the maximal cones in $\Sigma _{\tilde P}$.
Let $W = \mathbb R\mbox{-span}(\sigma \cap (-\sigma))$ and define an affine toric variety
$U(\tilde P)=X_{N/W\cap N}(\sigma ')$, where $\sigma '$ is the image of $\sigma $ in $N_{\mathbb R}/W$.
Then, the toric variety $X(\tilde P)$ is equipped with a projective toric morphism
\[
\phi _P : X(\tilde P) \to U(\tilde P).
\]
A projective toric morphism over an affine toric variety can always be realized
as $\phi _P$ above (\emph{op. cit.}, Theorem 7.2.4 and Proposition 7.2.3).
A projective birational toric modification is a special case in which
the cone $\sigma$ is strictly convex and $U(\tilde P)=X(\sigma)$.

\subsection{Toric blowing-up}
Let $\sigma \subset N_{\mathbb R}$ be a rational polyhedral cone
and $m_1,\dots, m_r\in M$ a set of generators of the semigroup $\sigma ^{\vee}\cap M$.
Then, the affine toric variety $X(\sigma)$ is the closure of the image of a map
\[
T\to \mathbb A^r;\quad t\mapsto (\chi ^{m_1}(t),\dots, \chi ^{m_r}(t))
\]
where $\chi ^m$ is a monomial with the exponent $m$.
Let us take another set of elements $m'_1,\dots, m'_s\in M$ and consider
a polytope $P$ that is the convex hull of  $\{m'_0=0, m'_1,\dots, m'_s\}$ in
$M_{\mathbb R}$ and a polyhedron $\tilde P$ given as the Minkowski sum $P+\sigma ^{\vee}$.
Let us assume further the following.

\begin{assumption}\label{very ampleness}
For every vertex $v\in \tilde P$,
the set
\[
\{m_1-v,\dots, m_r-v,-v,m'_1-v,\dots, m'_s-v\}
\]
generates the semigroup
$\sigma _v^{\vee}\cap M$ where $\sigma _v$ is the normal cone to $\tilde P$ at $v$.
\end{assumption}

\noindent
Note that this assumption
immediately implies that the polyhedron $\tilde P$ is very ample (\emph{op. cit.}, Definition 7.1.8).
In this situation, the toric variety $X(\tilde P)$ can be realized as the closure of the image of
a monomial map
\[
T \to  \mathbb A^r\times \mathbb P^s;\quad
t\mapsto \left( (\chi ^{m_1}(t),\dots ,\chi ^{m_r}(t)),\,
[1:\chi ^{m'_1}(t):\dots :\chi ^{m'_s}(t)] \right).
\]
The canonical morphism $\phi _P:X(\tilde P)\to X(\sigma)$, which we call
a \emph{toric blowing-up}, is nothing but the projection to the first factor $\mathbb A^r$.

\subsection{Fractional linearization and torus quotient}\label{frac lin}

Now we consider an action of a sub-torus $G\subset T$ on
a toric blowing-up $X(\tilde P)$ and
discuss GIT quotients of $X(\tilde P)$ by $G$. The following argument is
a slight generalization of \cite{KSZ}, \S 3.

The sub-torus $G\subset T$ acts in a trivial way on $X(\tilde P)$ and $X(\sigma)$
such that $\phi _{\tilde P}$ is $G$-equivariant.
Let $L$ be a line bundle on $X(\tilde P)$
that is the pull back of $\mathcal O_{\mathbb P^s}(1)$.
We call a linearization of $L^{\otimes k}$ a \emph{fractional linearization} of $L$

The cone $C(\tilde P)$ associated to $\tilde P$ is a cone in $\tilde M_{\mathbb R}
=M_{\mathbb R}\times \mathbb R$ such that
\[
C(\tilde P)\cap H_t = t\tilde P = tP + \sigma ^{\vee}
\]
where $H_t=\{(m,t)\in M_{\mathbb R}\times \mathbb R\; |\; m\in M_{\mathbb R}\}$,
the hyperplane of `height $t$'. The cone $C(\tilde P)$,
in turn, determines a graded ring
\[
S(\tilde P)=\mathbb C[C(\tilde P)\cap (M\times \mathbb Z)],
\]
and we know that $X(\tilde P)\cong \Proj \; S(\tilde P)$ (\emph{op. cit.}, Theorem 7.1.13).
Note that $S(\tilde P)_0=\mathbb C[\sigma ^{\vee}\cap M]$ is the coordinate
ring of the affine toric variety $X(\sigma)$ and
$S(\tilde P)_k = H^0(X(\tilde P),L^{\otimes k})
= \bigoplus _{m\in k\tilde P} \mathbb C \chi ^m$.

\begin{proposition}\label{torus quotient via polyhedron}
  We keep the notation above. Let $M_G$ be the character lattice of $G$ and
  $\alpha :M\to M_G$ the canonical projection corresponding to the embedding $G\subset T$.
  \begin{enumerate}[\rm (1)]
    \item The set of fractional linearizations of $L$ is naturally identified with
    $M_G\otimes \mathbb Q$.
    \item Assume that $b\in M_G\otimes \mathbb Q$ is a fractional $G$-linearlization of $L$.
    Then, the GIT quotient $X(\tilde P)^{ss}(L,b)\git G$ is a toric variety given by
    a polyhedron
    \[\tilde P_b = \tilde P \cap (\alpha \otimes \mathbb R) ^{-1}(-b),
    \]
    which is naturally identified with a (fractional) lattice polyhedron on
    a sublattice $\Ker(\alpha)\subset M$.
    \end{enumerate}
\end{proposition}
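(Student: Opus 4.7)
Part~(1) is essentially formal: since $G \subset T$ acts trivially on $X(\tilde P)$, any $G$-linearization of $L^{\otimes k}$ is determined by the single character by which $G$ acts on a fiber. The monomial embedding $X(\tilde P) \hookrightarrow \mathbb{A}^r \times \mathbb{P}^s$ recalled above supplies a canonical $T$-linearization of $L$, hence a canonical $G$-linearization to use as a basepoint, and the set of $G$-linearizations of $L^{\otimes k}$ becomes a torsor over $M_G$. Letting $k$ range over positive integers and identifying $(L^{\otimes k}, kb)$ with $(L^{\otimes \ell k}, \ell k b)$ yields the promised $M_G \otimes \mathbb{Q}$.

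For part~(2), the plan is to invoke the standard Proj presentation of a GIT quotient,
\[
X(\tilde P)^{ss}(L, b) \git G \;=\; \Proj \bigoplus_{k\geq 0,\, kb\in M_G} H^0\bigl(X(\tilde P), L^{\otimes k}\bigr)^{G,\, kb},
\]
where the superscript denotes the isotypic component on which $G$ acts by the character $-kb$ (with a sign convention consistent with part~(1)). Using the description $H^0(X(\tilde P), L^{\otimes k}) = \bigoplus_{m\in k\tilde P\cap M} \mathbb{C}\,\chi^m$ recalled just above the proposition, together with the fact that $G$ acts on $\chi^m$ with weight $\alpha(m)$ relative to our basepoint, this isotypic component is spanned precisely by monomials $\chi^m$ with
\[
m \;\in\; k\tilde P \cap (\alpha\otimes\mathbb{R})^{-1}(-kb) \;=\; k\tilde P_b.
\]

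To identify the resulting invariant graded ring with the coordinate ring of the toric variety associated to $\tilde P_b$, I would translate $\tilde P_b$ by a fixed rational point $v_0 \in (\alpha\otimes\mathbb{R})^{-1}(-b)$ so that it lies inside $\Ker(\alpha)\otimes\mathbb{R}$, making it a (fractional) lattice polyhedron on the sublattice $\Ker(\alpha)\subset M$ with recession cone $\sigma^{\vee} \cap (\Ker(\alpha)\otimes \mathbb{R})$. The Proj of the associated graded ring is then the toric variety attached to $\tilde P_b$ by the review in the first subsection, completing the identification.

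The main technical point is the bookkeeping around the genuinely fractional case $b \notin M_G$: there the invariant monomials appear only in degrees divisible by a denominator $N$ of $b$, so the computation above is really about the $N$-th Veronese subring, and the translated $\tilde P_b$ has its vertices in $\frac{1}{N}\Ker(\alpha)$ rather than in $\Ker(\alpha)$ itself. This is precisely the \emph{fractional} polyhedron language used in the statement, and once one clears denominators the argument reduces to the integral situation treated in \cite{KSZ}, \S 3.
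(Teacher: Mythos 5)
Your proposal is correct and follows essentially the same route as the paper: fractional linearizations are identified with affine lifts $l=\alpha+b$ of the character map, the degree-$k$ isotypic piece of $S(\tilde P)$ is spanned by the monomials with exponent in $k\tilde P\cap(\alpha\otimes\mathbb R)^{-1}(-kb)=k\tilde P_b$, so the quotient is $\Proj S(\tilde P_b)$, and the genuinely fractional case is handled by passing to a Veronese truncation exactly as in the paper. One slip worth noting: $G$ does \emph{not} act trivially on $X(\tilde P)$ --- it acts by restricting the dense-torus action (the paper's phrase ``acts in a trivial way'' means ``in the evident way'') --- but this does not damage your argument, since the torsor structure of the set of linearizations over $M_G$ only uses that the \emph{difference} of two linearizations of the same bundle is a character of $G$, together with the existence of the basepoint linearization you extract from the monomial embedding.
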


\begin{proof}
(1) After passing to sufficiently high Veronese embedding, namely passing to $L^{\otimes m}$ instead
of $L$, if necessary, we may assume that $S(\tilde P)$ is genarated
by $S(\tilde P)_1=\mathbb C[C(\tilde P)\cap (M\times \{ 1\})]$, \emph{i.e.}, we assume that
$\tilde P$ is a \emph{normal} polyhedron (\emph{op. cit.}, Definition 7.1.8).
Then, to give a $G$-linearization of $L$ is
the same as to give a dual $G$-action on the $S(\tilde P)_0$-module
\begin{equation}\label{module linearization}
S(\tilde P)_1\to S(\tilde P)_1\otimes \mathbb C[G]
\end{equation}
that is compatible with the dual $G$-action on $S(\tilde P)_0$ (cf. \cite{Muk}, Definition 6.23).
The $G$-action on $S(\tilde P)_0=\mathbb C[\sigma ^{\vee}\cap M]$ is determined by
the canonical projection $\alpha: M\to M_G$. The map \eqref{module linearization} is determined by
a map
\begin{equation}\label{linearization on monomials}
l:\tilde P\cap M\to M_G
\end{equation}
satisfying $l(m'+m)=\alpha(m')+l(m)$. This immediately implies that the map $l$ is
(a restriction of) an affine map $l:M\to M_G$ such that $l(m)=\alpha (m)+l(0)$ for all
$m \in M$. Therefore, a fractional linearization of $L$ is
in one to one correspondence with $b = l(0)\in M_G\otimes \mathbb Q$.

\noindent
(2)
A (integral) linearization $b\in M_G$ determines
a diagonal action of $G$ on
$S(\tilde P)_k = \bigoplus _{m\in k\tilde P} \mathbb C \chi ^m$,
thus it determines the ring of invariants $S(\tilde P)^{(G,b)}$
with respect to this action.
A monomial function $\chi ^m$ is $G$-invariant if and only if $l_b(m)=0$
for $l_b=l=\alpha + b$ as in \eqref{linearization on monomials} corresponding
to $b$.
The GIT quotient $X^{ss}(L,b)\git G$ is defined to be the $\Proj$ of
the graded ring $S(\tilde P)^{(G,b)}$.
If we define an affine plane $\tilde M_{\mathbb R,b}\subset \tilde M_{\mathbb R}$ by
\[
\tilde M_{\mathbb R,b}=\{(m,t)\in M_{\mathbb R}\times \mathbb R \; | \;
(l_b\otimes \mathbb R) (m)=0\},
\]
the invariant ring is given by
\[
S(\tilde P)^{(G,b)} =
\mathbb C[C(\tilde P)\cap \tilde M_{\mathbb R,b}\cap (M\times \mathbb Z)],
\]
which is nothing but $S(\tilde P_b)$.
Therefore, the GIT quotient $X(\tilde P)^{ss}(L,b)\git G$ is
the toric variety associated with $\tilde P_b$.
The case of fractional linearization $b\in M_G\otimes \mathbb Q$,
we just pass to a sufficiently high truncation of
the graded ring $S(\tilde P)$.
\end{proof}

We note that if we set $\bar \sigma ^{\vee} = \sigma^{\vee} \cap \Ker (\alpha) \otimes \mathbb R$
and $P_b = P \cap (\alpha \otimes \mathbb R)^{-1}(-b)$, we have $\tilde P_b = P_b+\bar \sigma^{\vee}$,
that is, the GIT quotient $X(\tilde P)^{ss}(L,b)\git G$ is
the toric blow-up of the affine quotient $X(\sigma)\git G$
determined by the polytope $P_b$.

\section{Toric description of a family of expanded degeneration}

\subsection{Family of expanded degenerations $X[n]$}
Now we study the local model of expanded degeneration using toric geometry.
For details, we refer \cite{GHH}. We also follow the notation in \emph{op. cit.}

Let $X=\mathbb A^2$ and $C=\mathbb A^1$ with coordinates $(x,y)$ and $t$, respectively,
and consider the morphism $X\to C=\mathbb A^1$ defined by $t=xy$. The base change
$X\times _{\mathbb A^1} \mathbb A^{n+1}$ by
\[
\mathbb A^{n+1}\to \mathbb A^1;\quad (t_1,\dots, t_{n+1})\mapsto t_1\dots t_{n+1}
\]
is an affine variety defined by $xy-t_1\dots t_{n+1}=0$ in $\mathbb A^{n+3}$.
The $n$-th family of expanded degeneration $X[n]\to \mathbb A^{n+1}$ is
a successive blowing-up $X[n]$ of $X\times _{\mathbb A^1} \mathbb A^{n+1}$
by the strict transform of a subvariety defined by the ideal $(t_i,x)$ for
$i=1,\dots, n$ in this order, equipped with the natural projection to $\mathbb A^{n+1}$.

One can easily see that $X[n]$ is a toric variety by the construction.
Let us describe $X[n]$ via a polyhedron.
It is easy to see that $X[n]$ is the closure of
the image of a map
\[
\Phi' [n]:T[n]=(\mathbb C^*)^{n+2}\to (\mathbb A^2\times \mathbb A^{n+1})\times (\mathbb P^1)^{n}
\]
defined by
\begin{multline*}
(s, t_1,\dots, t_{n+1})\mapsto \\
(\, \left(\frac{t_1\dots t_{n+1}}{s},s,\; t_1,\dots, t_{n+1}\right),\;
\left[1:\frac{t_2\dots t_{n+1}}{s}\right],
\left[1:\frac{t_3\dots t_{n+1}}{s}\right],\dots,
\left[1:\frac{t_{n+1}}{s}\right]\, ).
\end{multline*}
Here we note that we have the relations
\[
x = \frac{t_1\dots t_{n+1}}{s}\mbox{\quad and\quad }y = s.
\]
Composing with the Segre embedding $(\mathbb P^1)^n\to \mathbb P^{2^n-1}$,
we get a monomial map
\[
\Phi [n]:T[n] \to \mathbb A^{n+3}\times \mathbb P^{2^n-1}.
\]
By the description in \S 1.1, we have a polyhedron $\tilde P[n]$ on
the character lattice $M[n]=\mathbb Z^{n+3}$ of $T[n]$
such that
\[
  \phi _{\tilde P[n]}:X[n] = X(\tilde P[n])\to  X\times _{\mathbb A^1}\mathbb A^{n+1} = X(\sigma[n])
\]
is the composite of blowing-ups described above.
Here $\sigma [n]$ is the dual cone of the recession cone of the polyhedron $\tilde P[n]$, namely
we have a Minkowski sum decomposition
\[
\tilde P[n] = P[n] + \sigma [n]^{\vee}
\]
with $P[n]$ a lattice polytope on $M[n]$.

The cone $\sigma [n]^{\vee}$ is easy to describe:
taking a basis of $M[n]$ corresponding to
the coordinate $(s,t_1,\dots, t_{n+1})$, $\sigma [n]^{\vee}\subset M[n]_{\mathbb R}$ is
the cone generated by the column vectors of the matrix
\[
\sigma [n]^{\vee} =
\begin{pmatrix}
-1 & 1 & 0 & \cdots & 0 \\
1 & 0 & 1 & \cdots & 0 \\
\vdots & \vdots & \vdots &\ddots & \vdots \\
1 & 0 & 0 & \cdots & 1
\end{pmatrix},
\]
corresponding to the monomials $x,y,t_1,\dots, t_{n+1}$ in this order.

Let $\square_{n}$ be a hypercube in $\mathbb R^{n}$ whose $2^n$ vertices are
the vectors whose entries are $0$ or $1$. Then we define $P[n]$ to be the image of
$\square_n$ under the linear map $\mathbb R^n \to \mathbb R^{n+2}=M_{\mathbb R}$
defined by the left multiplication of a matrix
\[
\begin{pmatrix}
  -1 & -1 & \cdots & -1 & -1 \\
  0 & 0 & \cdots & 0 & 0 \\
  1 & 0 & \cdots & 0 & 0 \\
  1 & 1 & \ddots & 0 & 0 \\
  \vdots & \vdots & \ddots & \vdots & \vdots \\
  1 & 1 & \cdots & 1 & 0 \\
  1 & 1 & \cdots & 1 & 1
\end{pmatrix}.
\]
It is straightforward to see that $P[n]$ is a lattice polytope whose vertices
are generated by the vectors corresponding to the monomials that appear as the entries
of the monomial map $\pr_2\circ \Phi[n]: T\to \mathbb P^{2^n-1}$.
It is also easy to check that the polyhedron $\tilde P[n]$ is very ample,
thus it satisfies Assumption \ref{very ampleness}.

\subsection{Self-product $W[n]$}\label{def W[n]}
For later use, we calculate the polyhedron $\tilde P_W[n]$
corresponding to the $n$-fold self-product of $X[n]$ over the base $\mathbb A^{n+1}$,
\[
W[n] = (X[n]/\mathbb A^{n+1})^n = X[n] \times _{\mathbb A^{n+1}} \dots \times _{\mathbb A^{n+1}} X[n].
\]
It is easy to see that $W[n]$ is the closure of the image of a map
\[
\Phi' _W [n]:T_W[n]:=(\mathbb C^*)^{2n+1}\to
((\mathbb A^2)^n\times \mathbb A^{n+1})\times ((\mathbb P^1)^n)^n)
\]
defined by
\begin{multline}\label{coordinte of W[n]}
(s_1,\dots, s_n,t_1,\dots, t_{n+1})\mapsto \\
(\, (\, \left(\frac{t_1\dots t_{n+1}}{s_1},s_1;\,\dots \,;
\frac{t_1\dots t_{n+1}}{s_n},s_n\right),\, (t_1,\dots ,t_{n+1})\, ), \qquad \qquad \\
(\left[1:\frac{t_2\dots t_{n+1}}{s_1}\right],
\left[1:\frac{t_3\dots t_{n+1}}{s_1}\right],\dots,
\left[1:\frac{t_{n+1}}{s_1}\right]),\; \cdots\;  , \\
(\left[1:\frac{t_2\dots t_{n+1}}{s_n}\right],
\left[1:\frac{t_3\dots t_{n+1}}{s_n}\right],\dots,
\left[1:\frac{t_{n+1}}{s_n}\right])
).
\end{multline}
Let $M_W[n]$ be the character lattice of $T_W[n]$.
The recession cone $\sigma _W[n]^{\vee}$ of $\tilde P_W[n]$
is a rational polyhedral cone on $M_W[n]$
generated by the column vectors of a $(2n+1,3n+1)$ matrix
\begin{equation}\label{sigma W dual}
\sigma _W[n]^{\vee} =
\left(
\begin{array}{ccc;{2pt/2pt}c;{2pt/2pt}c}
  &-I_n& &I_n & O\\ \hdashline[2pt/2pt]
  1 & \cdots & 1 & & \\
  \vdots & & \vdots & O & I_{n+1} \\
  1 & \cdots & 1 & &
\end{array}
\right),
\end{equation}
while the polytopal part $P_W[n]$ is the image of the hypercube
$\square _{n^2}\subset \mathbb R^{n^2}$ by
a linear map defined by the matrix
\begin{equation}\label{proj from hypercube}
L[n] =
\left(
\begin{array}{ccc;{2pt/2pt}ccc;{2pt/2pt}c;{2pt/2pt}ccc}
 &&&&&&&&\\
 & -I_n & & & -I_n & & \cdots & & -I_n & \\
 &&&&&&&&\\\hdashline[2pt/2pt]
 0 & \cdots & 0 & 0 & \cdots & 0 & \cdots & 0 & \cdots & 0 \\
 1 & \cdots & 1 & 0 & \cdots & 0 & \cdots & 0 & \cdots & 0 \\
 1 & \cdots & 1 & 1 & \cdots & 1 & \cdots & 0 & \cdots & 0 \\
 & \vdots & & & \vdots & & \ddots & &\vdots & \\
 1 & \cdots & 1 & 1 & \cdots & 1 & \cdots & 1 & \cdots & 1
\end{array}
\right)
\end{equation}
of size $(2n+1,n^2)$.
As $\tilde P[n]$ satisfies Assumption \ref{very ampleness},
$\tilde P_W[n]$ also satisfies the assumption.

\section{relative symmetric product of an expanded degeneration and its quotient}

\subsection{Small resolution $\tilde Z^{(n)\prime}$ of $(X/C)^n$}
Next we review the construction of a small crepant resolution $\tilde Z^{(n)\prime}$ of
the relative $n$-fold self-product $(X/C)^n$ of the family $X\to C$ in \cite{N}.
For details, we refer \emph{op. cit.}, \S\S 1 and 2.

Let $\tilde X^{(n)\prime}=(X/C)^n=X\times _C \dots \times _C X$
be the $n$-fold self-product of $X$ over $C$. It is an affine toric variety defined by
the equations
\[
z_{11}z_{12}=z_{21}z_{22}=\dots =z_{n1}z_{n2}
\]
in $\mathbb A^{2n}$ with coordinates $(z_{11},z_{12},\dots, z_{n1},z_{n2})$.
The symmetric group $\mathfrak S_n$ acts on $X^{(n)\prime}$ by the permitation of
the first index $i$ of $z_{ij}$.

Let $\overline N=\mathbb Z^{n-1}$ and $e_i\in \overline N$ a vector whose
$i$-th entry is one and all the other entries are 0.
For a nonempty subset $I\subset \{1,2,\dots, n\}$, we define
$e_I= \sum _{i\in I} e_i$
and call such vectors \emph{primitive positive weight vectors}.
We define primitive negative weight vectors as the negation of
positive vectors. The positive vectors and negative vectors span a full dimensional
smooth fan $\bar \Delta ^{n}$ in $\overline N_{\mathbb R}$, which is isomorphic to
the Coxeter complex of $A_{n-1}$-root system. The simple reflections acts on $\overline N$ by
\[
(k\ \ k+1)=
\left(\begin{array}{c;{2pt/2pt}cc;{2pt/2pt}c}
I_{k-1} & & & \\ \hdashline[2pt/2pt]
& 0 & 1 & \\
& 1 & 0 & \\ \hdashline[2pt/2pt]
& & & I_{n-k-2}
\end{array}\right)
\mbox{\; and \;}
(n-1\ \ n)=
\begin{pmatrix}
1 & 0 & \cdots & 0 & -1 \\
0 & 1 & \cdots & 0 & -1 \\
\vdots & \vdots & \ddots & \vdots & -1\\
0 & 0 & \cdots & 1 & -1 \\
0 & 0 & \cdots & 0 & -1
\end{pmatrix}.
\]
One can easily check that the cone $\bar \delta ^{(n)}$ generated by the column vectors of
\[
\bar \delta ^{(n)}=
\begin{pmatrix}
  1 & 1 & \cdots & 1 \\
  0 & 1 & \cdots & 1 \\
  \vdots &\vdots  & \ddots & \vdots \\
  0 & 0 & \cdots & 1
\end{pmatrix}
\]
and its $\mathfrak S_n$-translates are exactly the maximal cones of the fan $\bar \Delta ^{(n)}$.
We have the correspoinding projective toric variety $X(\bar \Delta ^{(n)})$.
We denote the variety by $X(A_{n-1})$ for simplicity.

Let $N = \mathbb Z \oplus \overline N\oplus \mathbb Z
=\mathbb Z^{n+1}$ and consider an $\mathfrak S_n$-action
defined by
\begin{equation}\label{action on N}
\begin{aligned}
(k \ \ k+1) &=
\left(
\begin{array}{c;{2pt/2pt}cc;{2pt/2pt}c}
I_k & & & \\ \hdashline[2pt/2pt]
& 0 & 1 & \\
& 1 & 0 & \\ \hdashline[2pt/2pt]
& & & I_{n-k-1}
\end{array}
\right)
\mbox{\; for $k = 1,\dots, n-2$ and \;} \\
(n-1 \ \ n) &=
\left(
\begin{array}{c;{2pt/2pt}ccccc;{2pt/2pt}c}
1 & 0 & 0 & \cdots & 0 & -1 & 0 \\ \hdashline[2pt/2pt]
0 & 1 & 0 & \cdots & 0 & -1 & 0 \\
0 & 0 & 1 & \cdots & 0 & -1 & 0 \\
\vdots & \vdots & \vdots & \ddots & \vdots & \vdots & \vdots \\
0 & 0 & 0 & \cdots & 1 & -1  & 0  \\
0 & 0 & 0 & \cdots & 0 & -1 & 0\\ \hdashline[2pt/2pt]
0 & 0 & 0 & \cdots & 0 & 1 & 1
\end{array}
\right).
\end{aligned}
\end{equation}
Then, the projection $N\to \overline N$ is
$\mathfrak S_n$-equivariant. Let $\delta ^{(n)}$ be the maximal cone in $N_{\mathbb R}$
spanned by the column vectors of
\[
\delta ^{(n)}=
\left(
\begin{array}{c;{2pt/2pt}cccc;{2pt/2pt}c}
  1 & 1 & 1 &\cdots & 1 & 0 \\ \hdashline[2pt/2pt]
  0 & 1 & 1 &\cdots & 1 & 0 \\
  0 & 0 & 1 & \cdots & 1 & 0 \\
  \vdots & \vdots & \vdots & \ddots & \vdots & \vdots \\
  0 & 0 & 0 & \cdots & 1 & 0 \\ \hdashline[2pt/2pt]
  0 & 0 & 0 & \cdots & 0 & 1
\end{array}
\right)
\]
and $\Delta ^{(n)}$ be the fan consiting of faces of maximal cones
$s\delta ^{n}\; (s\in \mathfrak S_n)$. Then, it is easy to see that
the toric variety $\tilde Z^{(n)\prime} = X(\Delta ^{(n)})$
is the total space of $\mathbb C^2$-bundle
$\mathcal O_{X(A_{n-1})}(-D_{pos})
\oplus \mathcal O_{X(A_{n-1})}(-D_{neg})$ over $X(A_{n-1})$, where
$D_{pos}$ is the sum of torus invariant divisors corresponding to positive vectors
and $D_{neg}$ is defined similarly;
\[
D_{pos}= \sum _I D_{e_I},\quad
D_{neg}= \sum _I D_{-e_I}.
\]
Let $\sigma ^{(n)}$ be the union of all the maximal cones in $\Delta ^{(n)}$.
It is easy to see that no ray in $\Delta ^{(n)}$ is in the relative interior of
$\sigma ^{(n)}$, and that $\sigma ^{(n)}$ is generated by the column vectors of
\begin{equation}\label{sigma (n)}
\sigma ^{(n)}=
\left(
\begin{array}{c;{2pt/2pt}cccc;{2pt/2pt}cccc;{2pt/2pt}c}
1 & 1 & \cdots & \cdots & 1 & 0 & \cdots &\cdots & 0 &  0 \\ \hdashline[2pt/2pt]
0 & &&& & &&& & 0 \\
\vdots & \multicolumn{4}{|c|}{\shortstack{positive primitive\\ weight vectors}}
  & \multicolumn{4}{|c|}{\shortstack{negative primitive\\ weight vectors}} & \vdots \\
0 & &&& & &&& & 0 \\ \hdashline[2pt/2pt]
0 & 0 & \cdots & \cdots & 0 & 1 & \cdots & \cdots & 1 & 1
\end{array}
\right).
\end{equation}
This implies that the associated projective birational toric morphism
$X(\Delta ^{(n)})\to X(\sigma ^{(n)})$ is small \emph{i.e.}, its exceptional set
has no divisorial component.

\begin{proposition}[\cite{N}, Proposition 1.4, Proposition 2.5]\label{small resolu}
 The affine toric variety $X(\sigma ^{(n)})$ is the relative n-fold product
 $\tilde X^{(n)\prime}$ of $X$ over $C$. Therefore, $\tilde Z^{(n)\prime}=X(\Delta ^{(n)})$
 is an $\mathfrak S_n$-equivariant small projective resolution
 of $\tilde X^{(n)\prime}$.
\end{proposition}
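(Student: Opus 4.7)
The plan is to split Proposition \ref{small resolu} into two assertions and tackle them in sequence. The first is the identification of the affine toric variety $X(\sigma^{(n)})$ with $\tilde X^{(n)\prime}$; the second is that $X(\Delta^{(n)})\to X(\sigma^{(n)})$ is an $\mathfrak{S}_n$-equivariant small projective resolution.

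For the first assertion, both sides are affine toric varieties of dimension $n+1$: the dense torus of $\tilde X^{(n)\prime}$ is parametrised by $(z_{11},z_{21},\dots,z_{n1},t)$ with $t=z_{i1}z_{i2}$, matching the rank of $N$. I would exhibit $2n$ distinguished lattice points $m_{ij}\in(\sigma^{(n)})^{\vee}\cap M$ (for $i=1,\dots,n$, $j=1,2$) that generate the semigroup $(\sigma^{(n)})^{\vee}\cap M$ and satisfy exactly the binomial relations $m_{i1}+m_{i2}=m_{k1}+m_{k2}$. Using the decomposition $N=\mathbb{Z}\oplus\overline N\oplus\mathbb{Z}$ and reading the dual cone off \eqref{sigma (n)}, the natural candidates for $m_{i1}$ and $m_{i2}$ are the lattice functionals that pair to $1$ with the first respectively last $\mathbb{Z}$-factor and take complementary values on the middle $\overline N$-block so as to remain non-negative on every generator of $\sigma^{(n)}$. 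Assigning $\chi^{m_{ij}}\leftrightarrow z_{ij}$ then realises $X(\sigma^{(n)})$ as the subvariety of $\mathbb A^{2n}$ cut out by $z_{11}z_{12}=\cdots=z_{n1}z_{n2}$.

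The second assertion follows almost formally once the first is in place. The morphism is projective birational by the description in \S 1.1, since $\sigma^{(n)}$ is strictly convex and coincides with the union of the maximal cones of $\Delta^{(n)}$. Smoothness is immediate from the matrix $\delta^{(n)}$: its columns form a $\mathbb{Z}$-basis of $N$ (upper triangular with ones on the diagonal, together with $e_{n+1}$), so every maximal cone $s\delta^{(n)}$ is smooth. Smallness was essentially already observed just before \eqref{sigma (n)}: the rays of $\Delta^{(n)}$ coincide with the generators of $\sigma^{(n)}$ listed in \eqref{sigma (n)}, hence are edges of $\sigma^{(n)}$ rather than rays interior to a higher-dimensional face, so no prime torus-invariant divisor is contracted. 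The $\mathfrak{S}_n$-equivariance is built into the construction, because the action \eqref{action on N} was chosen so as to permute the maximal cones $s\delta^{(n)}$ and therefore preserves $\Delta^{(n)}$.

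The main obstacle is the semigroup computation in the first assertion: one must identify the right lattice functionals realising the coordinates $z_{ij}$, check they lie in $(\sigma^{(n)})^{\vee}$, and verify that they suffice as generators with no hidden relations. The bookkeeping is made unwieldy by the decomposition $N=\mathbb{Z}\oplus\overline N\oplus\mathbb{Z}$ and by the distinction between ``positive'' and ``negative'' weight rays, but conceptually this is a standard toric computation with no real surprise.
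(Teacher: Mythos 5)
First, note that the paper itself does not prove this proposition: it is quoted from \cite{N} (Propositions 1.4 and 2.5 there), and the only in-text support is the remark before \eqref{sigma (n)} (for smallness) and, implicitly, Proposition \ref{polytope of Znp} (for projectivity). So your attempt is really being measured against the cited source. Your outline is essentially correct, and your candidate functionals are the right ones: the generators of $\sigma^{(n)\vee}$ displayed in the proof of Proposition \ref{polytope of Znp} are exactly the $2n$ vectors $m_{i1}=(1,-e_{i-1},0)$ and $m_{i2}=(0,e_{i-1},1)$ (with $e_0=0$), which satisfy $m_{i1}+m_{i2}=(1,0,1)$ for all $i$ and pair non-negatively with every column of \eqref{sigma (n)}. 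The route in \cite{N} is different and lighter, however: it identifies $\sigma^{(n)}$ (after the base change $Q$ mentioned in the proof of Proposition \ref{polytope of Znp}) with the fiber product of $n$ copies of the cone of $\mathbb A^2\to\mathbb A^1$ over $\mathbb R_{\geqslant 0}$, using the compatibility of affine toric varieties with fiber products (the same Lemma 1.5 of \cite{N} that this paper invokes in \S 4.4 for $\sigma[n]$). That bypasses the two checks your plan correctly flags as the main burden: that the $m_{ij}$ generate the semigroup $\sigma^{(n)\vee}\cap M$, and that there are ``no hidden relations'' --- note that generating the lattice of relations is weaker than generating the toric ideal, so to close this off you would still need an argument such as: $\{z_{11}z_{12}=\dots=z_{n1}z_{n2}\}$ is an irreducible reduced complete intersection of dimension $n+1$ containing the closure of the image of the torus, hence equal to it.

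Two steps of your second half are stated too quickly. Projectivity does not follow merely from $\sigma^{(n)}$ being strictly convex and equal to the support of $\Delta^{(n)}$; that only gives a proper birational toric morphism. You need a strictly convex support function on $\Delta^{(n)}$, i.e., precisely the polyhedron $\tilde P^{(n)}=\sigma^{(n)\vee}+\iota P^{(n)}$ of Proposition \ref{polytope of Znp} (whose proof is independent of the present statement, so it may be invoked without circularity). Likewise, for smallness, being a listed generator of $\sigma^{(n)}$ is not the same as spanning an edge of $\sigma^{(n)}$; one should verify extremality, e.g., by exhibiting for each ray $(1,e_I,0)$ the $n$ linearly independent functionals among the $m_{ij}$ that vanish on it. Finally, $\mathfrak S_n$-equivariance of the \emph{resolution} requires not only that \eqref{action on N} permutes the cones $s\delta^{(n)}$, but also that the induced action on $X(\sigma^{(n)})$ agrees with the permutation action on $(X/C)^n$, i.e., that the dual action on $M$ sends $m_{ij}$ to $m_{s(i)j}$; this is exactly where the normalization by $Q$ enters and where the index bookkeeping you mention has to be done once, carefully.
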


We can also describe the toric variety $X(\Delta ^{(n)})$ in terms of polyhedron.
It is well-known that the Coxeter complex $\bar \Delta ^{(n)}$ of $A_{n-1}$-root system is
a normal fan to the $n$-th \emph{permutahedraon} $P^{(n)}$. One of a realization
of $P^{(n)}$ is as follows; define $P^{(n)}$ as the convex hull of the vertex set
\[
\Big\{ v_s=
  \begin{pmatrix} s(1) \\ s(2) \\ \vdots \\ s(n-1)\end{pmatrix}
    - \begin{pmatrix} 1 \\ 2 \\ \vdots \\ n-1 \end{pmatrix}
  \; \Big| \;  s\in \mathfrak S_n
\Big\} \subset \mathbb R^{n-1}.
\]
This is clearly a lattice polytope on $\overline M=\mathbb Z^{n-1}$, the dual of $\overline N$.
The normal Fan to $P^{(n)}$ agrees with our $\bar \Delta ^{(n)}$. Actually, the vertices
adjacent to $v_e = (0,\dots, 0)^T$ is given by $v_s$ for all simple transpositions
$s=(1\; 2), (2\; 3), \dots, (n-1\; n)$, namely the column vectors of
\begin{equation}\label{edge cone}
B_e=
\begin{pmatrix}
1 & 0 & \cdots & \cdots & 0 & 0\\
-1 & 1 & \cdots & \cdots & 0 & 0\\
0 & -1 & \ddots &  & 0 & 0 \\
\vdots & \vdots & \ddots &\ddots & \vdots & \vdots \\
0 & 0 &  &\ddots & 1 & 0\\
0 & 0 & \cdots &\cdots & -1 & 1
\end{pmatrix},
\end{equation}
The dual cone to the cone $B_e$ spanned by the column vectors of the matrix
is the normal cone to $P^{(n)}$ at $v_e$, which
agrees with the positive Weyl chamber $\bar \delta ^{(n)}$.
Moreover, one can easily check that the normal cone at $v_{s^{-1}}\; (s\in \mathfrak S_n)$
is $s\bar \delta ^{(n)}$.

Now we consider $M=\mathbb Z\oplus \overline M\oplus \mathbb Z$, the dual of
$N=\mathbb Z\oplus \overline N\oplus \mathbb Z$ and let $\iota P^{(n)}$
be the image of $P^{(n)}$
under the natural injection $\iota: \overline M\to M$. We define a polyhedron
\[
\tilde P^{(n)} = \sigma ^{(n)\vee} + \iota P^{(n)}.
\]

\begin{proposition}\label{polytope of Znp}
The toric variety $X(\tilde P^{(n)})$ associated to the polyhedron $\tilde P^{(n)}$
is isomorphic to $X(\Delta ^{(n)})$.
\end{proposition}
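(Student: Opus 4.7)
The plan is to compute the normal fan of the polyhedron $\tilde P^{(n)}$ directly and show that it equals $\Delta^{(n)}$, which immediately yields $X(\tilde P^{(n)}) \cong X(\Delta^{(n)})$.

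Since $\tilde P^{(n)} = \iota P^{(n)} + \sigma^{(n)\vee}$ is a Minkowski sum of a polytope and a strongly convex rational polyhedral cone, its recession cone is $\sigma^{(n)\vee}$ and the support of its normal fan is $\sigma^{(n)}$---the same as the support of $\Delta^{(n)}$. The vertices of $\tilde P^{(n)}$ are exactly the images $\iota(v_s)$ for $s \in \mathfrak S_n$. For a vertex $\iota(v_s)$, the normal cone in $\tilde P^{(n)}$ consists of those $n \in N_{\mathbb R}$ for which $\langle n, \cdot \rangle$ attains its minimum on $\tilde P^{(n)}$ at $\iota(v_s)$. Because the minimum of a linear functional on a Minkowski sum is the sum of the minima on the summands, and the minimum on $\sigma^{(n)\vee}$ is $0$ exactly when $n \in \sigma^{(n)}$ (and $-\infty$ otherwise), this normal cone equals the intersection of $\sigma^{(n)}$ with the normal cone of $\iota P^{(n)}$ at $\iota(v_s)$, which in turn is the preimage under $\iota^*$ of the normal cone of $P^{(n)}$ at $v_s$. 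Using the identification of the normal cones of $P^{(n)}$ with the Weyl chambers recorded just before the proposition, each maximal cone of the normal fan of $\tilde P^{(n)}$ has the form $\iota^{*-1}(t\bar\delta^{(n)}) \cap \sigma^{(n)}$ for some $t \in \mathfrak S_n$.

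By the $\mathfrak S_n$-equivariance of the entire setup, it suffices to establish the single identity $\iota^{*-1}(\bar\delta^{(n)}) \cap \sigma^{(n)} = \delta^{(n)}$. The inclusion $\delta^{(n)} \subseteq \iota^{*-1}(\bar\delta^{(n)}) \cap \sigma^{(n)}$ is immediate: each generator of $\delta^{(n)}$ appears among the generators of $\sigma^{(n)}$, and its middle coordinate $e_{\{1,\ldots,k\}}$ lies in $\bar\delta^{(n)}$. For the reverse inclusion I would use the explicit description
\[
\delta^{(n)} = \{(a, v, c) \in N_{\mathbb R} : v \in \bar\delta^{(n)},\ c \geq 0,\ a \geq v_1\},
\]
obtained by inverting the simplicial parametrization by the generators of $\delta^{(n)}$. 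Given $(a, v, c) \in \sigma^{(n)}$ written as a nonnegative combination
\[
\alpha_0 (1,0,0) + \sum_J \alpha_J (1, e_J, 0) + \sum_K \beta_K (0, -e_K, 1) + \beta_0 (0,0,1)
\]
of generators of $\sigma^{(n)}$ indexed by subsets $J, K \subseteq \{1,\dots,n-1\}$, both $c = \beta_0 + \sum_K \beta_K$ and the cancellation $a - v_1 = \alpha_0 + \sum_{J \not\ni 1} \alpha_J + \sum_{K \ni 1} \beta_K$ are manifestly nonnegative; combining with the hypothesis $v \in \bar\delta^{(n)}$ gives $(a, v, c) \in \delta^{(n)}$.

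The main technical point is this final nonnegativity calculation, which is elementary but hinges on the precise split between positive and negative primitive weight vectors in the generator description of $\sigma^{(n)}$. Once the identity is established, translating by all $s \in \mathfrak S_n$ shows that each maximal cone of the normal fan of $\tilde P^{(n)}$ is a maximal cone $s\delta^{(n)}$ of $\Delta^{(n)}$; combined with the agreement of supports these fans coincide, and the required isomorphism $X(\tilde P^{(n)}) \cong X(\Delta^{(n)})$ of toric varieties follows.
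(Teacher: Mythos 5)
Your proof is correct and follows essentially the same route as the paper: both identify the maximal cones of the normal fan of $\tilde P^{(n)}$ as the normal cones at the vertices $\iota(v_s)$, reduce by $\mathfrak S_n$-equivariance to the single vertex $v_e$, and verify that the resulting cone is $\delta^{(n)}$. The only divergence is that you check the key identity $\iota^{*-1}(\bar\delta^{(n)})\cap\sigma^{(n)}=\delta^{(n)}$ directly on the $N$-side by inequalities on the generators of $\sigma^{(n)}$, whereas the paper computes explicit generators of the tangent cone $\sigma^{(n)\vee}+\iota B_e$ on the $M$-side and dualizes; these are equivalent computations, and your nonnegativity check is valid.
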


\begin{proof}
Note that the set of vertices of $\tilde P^{(n)}$ is the same as the set of vertices
of $\iota P^{(n)}$,
\[
\Big\{
\tilde v_s =
\left(
\begin{array}{c}
0 \\ \hdashline[2pt/2pt]
v_s \\ \hdashline[2pt/2pt]
0
\end{array}
\right) \in \mathbb R \oplus \overline M_{\mathbb R}\oplus \mathbb R\; \Big| \;
s\in \mathfrak S_n)
\Big\}.
\]
The cone $\sigma ^{(n)\vee}$ is generated by the column vectors of
\[
\sigma ^{(n)\vee}=
\begin{pmatrix}
1 & 1 & 1 & \cdots & 1 & 0 & 0 & 0 & \cdots & 0 \\
0 & -1 & 0 & \cdots & 0 & 0 & 1 & 0 & \cdots & 0 \\
0 & 0 & -1 & \cdots & 0 & 0 & 0 & 1 & \cdots & 0 \\
\vdots & \vdots & \vdots & \ddots & \vdots & \vdots & \vdots & \vdots & \ddots & \vdots \\
0 & 0 & 0 & \dots & -1 & 0 & 0 & 0 & \cdots & 1 \\
0 & 0 & 0 & \cdots & 0 & 1 & 1 & 1 & \cdots & 1
\end{pmatrix}
\]
by \emph{op. cit.}, \S 1.6 (note that we are working on a basis modified by $Q$
in \emph{op. cit.}, Proof of Proposition 2.5).
One can easily see that a cone $\sigma ^{(n)\vee} + \iota B_e$
is generated by the column vectors of
\begin{equation}\label{delta-n-dual}
\begin{pmatrix}
1 & 0 & \cdots & \cdots & 0 & 0 & 0 \\
-1 & 1 & \cdots & \cdots & 0 & 0 & 0 \\
0 & -1 & \ddots & & 0 & 0 & 0 \\
\vdots & \vdots & \ddots & \ddots & \vdots & \vdots & \vdots \\
0 & 0 & & \ddots & 1 & 0 & 0 \\
0 & 0 & \cdots & \cdots & -1 & 1 & 0 \\
0 & 0 & \cdots & \cdots & 0 & 0 & 1
\end{pmatrix}.
\end{equation}
Therefore, the normal cone to $\tilde P^{(n)}$ at $\tilde v_e$, which is the
dual cone to $\sigma ^{(n)\vee} + \iota B_e$, agrees with $\delta ^{(n)}$.
At the vertex $\tilde v_{s^{-1}}\; (s\in \mathfrak S_n)$, the normal cone is the dual
cone to $\sigma ^{(n)\vee}+\iota (s^{-1}B_e)$ as $\sigma ^{(n)\vee}$ is invariant
under the action of $\mathfrak S_n$. However, as we know that the dual cone
of $s^{-1}B_e$ is nothing but $s\bar \delta ^{(n)}$, the normal cone to $\tilde P^{(n)}$
at $\tilde v_{s^{-1}}$ must be the same as $s\delta ^{(n)}$. This implies that
the normal fan of $\tilde P^{(n)}$ is exactly the fan $\Delta ^{(n)}$.
\end{proof}

\subsection{GHH linearization}\label{GHH linearization}
Let us go back to the self-product $W[n]$ of the expanded degeneration $X[n]$.
We keep the notation in \S \ref{def W[n]}.
Gulbrandsen, Halle, and Hulek introduced in \cite{GHH} a specific fractional linearization
on the expanded degeneration $X[n]$ with respect to the embedding
\[
\Phi [n]: X[n]\overset{\Phi '[n]}{\lto}
(\mathbb A^2\times \mathbb A^{n+1})\times (\mathbb P^1)^n
\overset{\mbox{\scriptsize Segre}}{\lto}
(\mathbb A^2\times \mathbb A^{n+1})\times \mathbb P^{2^n-1}.
\]
Here we describe the fractional linearization in the framework of \S \ref{frac lin}.

Let us consider the torus $(\mathbb C^*)^{n+1}$ of the base space of the $n$-th expanded degeneration
$X[n]\to \mathbb A^{n+1}$ with coordinate $(t_1,\dots, t_{n+1})$. We define
\[
G[n]:=\{(t_1,\dots, t_n,t_{n+1})\in (\mathbb C^*)^{n+1}\; |\; t_1\dots t_n t_{n+1}=1\}.
\]
We note that we can naturally regard $G[n]$
as a sub-torus of $T[n]$ or $T_W[n]$ by our consistent choice of coordinate.
We also note that $G[n]$ has a natural action of $G[n]$ on $(\mathbb P^1)^n$ through
the map $\Phi '[n]$.
Following \cite{GHH}, we introduce another coordinate $(\tau _1,\dots ,\tau _n)$ of $G[n]$ by
\[
\tau _i =  \prod _{j=1}^i t_j\quad (i=1,\dots, n).
\]
In other words, we have $t_1 = \tau _1$ and $t_i = \tau _i / \tau _{i-1}$ for $i=2,\dots, n+1$.
Now we let $G[n]$ act on $(\mathbb A^2)^n$ by
\[
(\dots, (u_i,v_i), \dots )\mapsto (\dots, (\tau _i^{\frac{i}{n+1}}u_i,\tau _i^{\frac{i}{n+1}-1}v_i),\dots).
\]
Putting
\[
[u_i:v_i] = \left[1:\frac{t_{i+1}\dots t_{n+1}}{s} \right],
\]
we see that this is a lifting of the $G[n]$ action on $(\mathbb P^1)^n$ induced by $\Phi '[n]$.
This determines a linearlization on $L^{\otimes n+1}$ where $L$ is a pull-back to $X[n]$ of
$\mathcal O_{\mathbb P^{2^n-1}}(1)$ under $\Phi [n]$, hence we get a fractional linearlization on $L$, which
we call \emph{GHH fractional linearlization}. As we saw in \S \ref{frac lin}, we have a corresponding
affine map
\[
l _{GHH} : M[n]_{\mathbb Q} \to M_G[n]_{\mathbb Q},
\]
where we will always take a dual basis on $M_G[n]$
to the coordinate $(\tau _1,\dots, \tau _n)$.
The origin of $M[n]$ is a vertex of the polytope $P[n]$ that correspoinds to
the monomial $u_1\dots u_n$. As $\tau _i$ acts on the monomial via a character
$\tau _i\mapsto \tau _i^{\frac{i}{n+1}}$, we know that
\[
b _{GHH} = l_{GHH}(0) =
\begin{pmatrix}
\frac{1}{n+1} \\ \vdots \\ \frac{n}{n+1}
\end{pmatrix}.
\]

The GHH fractional linearization induces a fractional linearization of the self-product
$W[n]$ with respect to the embedding
\[
\Phi _W [n]:W[n]\overset{\scriptsize \Phi '_W [n]}{\lto}
((\mathbb A^2)^n\times \mathbb A^{n+1})\times ((\mathbb P^1)^n)^n)
\overset{\mbox{\scriptsize Segre}}{\lto}
((\mathbb A^2)^n\times \mathbb A^{n+1})\times \mathbb P^{2^{n^2}-1} .
\]
The origin of $M_W[n]$ is a vertex of the polytope $P_W[n]$ corresponding to
a monomial $(u_1 \dots u_n)^n$, and therefore the induced fractional linearization is given by
\[
b^{W[n]}_{GHH} =
\begin{pmatrix}
  \frac{n}{n+1} \\ \frac{2n}{n+1} \\ \vdots \\ \frac{n^2}{n+1}
\end{pmatrix}.
\]
For later use, we note that the linear part $\alpha _W[n]:M_W[n]\to M_G[n]$
of the GHH linearlization
\[
l^{W[n]}_{GHH}=\alpha _W[n]+b^{W[n]}_{GHH}:
M_W[n]_{\mathbb Q}\to M_G[n]_{\mathbb Q}
\]
is given by the $(n,2n+1)$ matrix
\[
\alpha _W[n] =
\left(
\begin{array}{c;{2pt/2pt}cccccc}
  & 1 & -1 & 0 & \cdots & 0 & 0 \\
  & 0 & 1 & -1 & \cdots & 0 & 0 \\
O_n\;\;  & \vdots  & \vdots   & \vdots & \ddots & \vdots & \vdots \\
  & 0 & 0 & 0 & \cdots & -1 & 0 \\
  & 0 & 0 & 0 & \cdots & 1 & -1
\end{array}
\right),
\]
where $O_n$ is the zero matrix of size $(n,n)$.

\subsection{GIT quotient of $W[n]$}

Now we are prepared to prove the following

\begin{theorem}\label{quotiont of W[n]}
Notation as above.
The GIT quotient $W[n]^{ss} \git G[n]$ with respect to
the GHH fractional linearlization is isomorphic to $\tilde Z^{(n)\prime}$.
\end{theorem}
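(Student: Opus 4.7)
The plan is to apply Proposition \ref{torus quotient via polyhedron} (2) to realize the quotient as the toric variety of a sliced polyhedron on $\Ker(\alpha_W[n])$, and then to identify this with $\tilde P^{(n)} = \sigma^{(n)\vee} + \iota P^{(n)}$ on $M$, which by Proposition \ref{polytope of Znp} defines $\tilde Z^{(n)\prime}$. Since the abstract toric variety depends only on the normal fan of its defining polyhedron, and the normal fan is preserved under lattice translation and positive rescaling of the polytope part, it suffices to match the sliced polyhedron with $\tilde P^{(n)}$ up to such transformations via a suitable lattice isomorphism $\phi : \Ker(\alpha_W[n]) \xrightarrow{\sim} M$.

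First, by Proposition \ref{torus quotient via polyhedron} (2) and the remark following it, $W[n]^{ss}\git G[n]$ is the toric variety of the Minkowski sum of the slice of $P_W[n]$ and the recession cone $\sigma_W[n]^\vee \cap \Ker(\alpha_W[n])$. Direct linear algebra from the explicit form of $\alpha_W[n]$ identifies $\Ker(\alpha_W[n])$ as the sublattice where $t_1 = \cdots = t_{n+1}$, with canonical basis $e_{s_1}, \ldots, e_{s_n}, \sum_{k=1}^{n+1} e_{t_k}$; using (\ref{sigma W dual}), $\sigma_W[n]^\vee \cap \Ker(\alpha_W[n])$ is generated by the $2n$ rays corresponding to $y_i = s_i$ and $x_i = (t_1 \cdots t_{n+1})/s_i$ for $i=1,\ldots,n$. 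Writing $h, \epsilon_1, \ldots, \epsilon_{n-1}, h'$ for the standard basis of $M = \mathbb Z \oplus \overline M \oplus \mathbb Z$, I would define $\phi$ by $e_{s_i} \mapsto h - \epsilon_{i-1}$ (with the convention $\epsilon_0 := 0$) and $\sum_{k=1}^{n+1} e_{t_k} \mapsto h + h'$. This has determinant $(-1)^{n-1}$, and direct checking confirms that it matches the $2n$ generating rays of $\sigma_W[n]^\vee \cap \Ker(\alpha_W[n])$ with the $2n$ extreme rays of $\sigma^{(n)\vee}$ listed in the proof of Proposition \ref{polytope of Znp}.

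The combinatorial heart of the argument is the polytope part. The slice of $P_W[n]$ is the image under $L[n]$ (from (\ref{proj from hypercube})) of the set of matrices $A\in [0,1]^{n\times n}$ with prescribed column sums $\sum_i A_{ik} = kn/(n+1)$, where the $s$-coordinates of the image equal the negated row sums. Since the columns of $A$ vary independently subject to the sum constraints, I would locate the vertices of the slice polytope by linear programming: for a generic direction $u \in \mathbb R^n$ with $u_{\sigma(1)} > \cdots > u_{\sigma(n)}$, maximizing $\sum_i u_i s_i$ column-by-column picks out a unique matrix depending only on $\sigma \in \mathfrak S_n$, whose row sums can then be computed in closed form. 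This shows the slice polytope has exactly $n!$ vertices indexed by $\mathfrak S_n$, with normal fan the Coxeter fan $\bar \Delta^{(n)}$, the same normal fan as $\iota P^{(n)}$. Combined with the matching of recession cones via $\phi$, this identifies the full normal fan of $\tilde P_W[n] \cap (\alpha_W[n]\otimes \mathbb R)^{-1}(-b^{W[n]}_{GHH})$ with that of $\tilde P^{(n)}$, yielding $W[n]^{ss}\git G[n] \cong X(\tilde P^{(n)}) = \tilde Z^{(n)\prime}$. The main technical obstacle is verifying that the LP-selected vertices exhaust all vertices of the slice polytope and that their adjacency (edges dual to walls between adjacent Weyl chambers) realizes the Coxeter complex of $\bar\Delta^{(n)}$, so that the normal fans indeed coincide under $\phi$.
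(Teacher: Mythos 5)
Your proposal is correct and follows essentially the same route as the paper: reduce via Proposition \ref{torus quotient via polyhedron} to matching the sliced polyhedron with $\tilde P^{(n)}=\sigma^{(n)\vee}+\iota P^{(n)}$ of Proposition \ref{polytope of Znp}, treating the recession cone and the polytope part separately and invoking invariance of the normal fan under translation and rescaling of the polytope part. The only tactical differences are that the paper computes the recession cone dually, as the image $\pi(\sigma_W[n])$ via Hu's lemma (Lemma \ref{conical part}), rather than intersecting $\sigma_W[n]^{\vee}$ with $\Ker(\alpha_W[n])$, and establishes the permutahedron shape of the sliced polytope by an explicit convex-hull containment (Lemma \ref{vertex of P_b[n]}) rather than your linear-programming selection of vertices; both of your variants go through, and the exhaustion worry you flag is automatic since every vertex of the image of a product polytope is the image of the unique blockwise maximizer of a generic linear functional.
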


According to Proposition \ref{torus quotient via polyhedron},
the polyhedron
\[
\tilde P_b[n] := \tilde P_W[n]\cap (\alpha _W[n]\otimes \mathbb R)^{-1}(-b)
\]
with $b=b^{W[n]}_{GHH}$ determines the quotient $W[n]^{ss}\git G[n]$.
First we calculate the recession cone $\sigma '[n]^{\vee}$
of $\tilde P_b[n]$. We have a short exact sequence of lattices
\[
0 \lto M'[n] \lto M_W[n] \overset{\alpha _W[n]}{\lto} M_G[n]\lto 0.
\]
By dualizing the sequence, we get a surjective map
$\pi:N_W[n]\to N'[n]\cong \mathbb Z^{n+1}$. Then by \cite{H}, Lemma 10.1,
the dual to the recession cone $\sigma '[n]$ is just the image
$\pi (\sigma _W[n])$. By making an appropriate choice of basis for $N'[n]$,
$\pi$ is given by the matrix
\begin{equation}\label{matrix pi}
\pi =
\left(
\begin{array}{ccccc;{2pt/2pt}ccccc}
0 & 0 & \cdots & 0 & -1 & 1 & 1 & \cdots & 1 & 1 \\ \hdashline[2pt/2pt]
1 & 0 & \cdots & 0 & -1 & &\\
0 & 1 & \cdots & 0 & -1 & & \\
\vdots & \vdots & \ddots & \vdots & \vdots & & & O_{n,n+1}\\
0 & 0 & \cdots & 1 & -1 & & \\
0 & 0 & \cdots & 0 & 1 & & \\
\end{array}
\right).
\end{equation}

\begin{lemma}\label{conical part}
{\rm (1)} The dual cone $\sigma _W[n]$ of the recession cone of $\tilde P_W[n]$ is
a rational polyhedral cone genereted by the vectors
\[
v =
\left(
\begin{array}{c}
  a_1 \\ \vdots \\ a_n \\ \hdashline[2pt/2pt]
  b_1 \\ \vdots \\ b_{n+1}
\end{array}
\right)
\]
such that
(i) $a_i$ is either $0$ or $1$ for $i=1,\dots, n$, and
(ii) exactly one among $b_j$'s is $1$ and others are all $0$.

\noindent {\rm (2)} The image $\sigma '[n] = \pi (\sigma _W[n])$
coincides with $\sigma ^{(n)}$.

\end{lemma}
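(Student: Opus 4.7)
The plan is to obtain (1) by describing $\sigma_W[n]$ through the inequalities it imposes, and then to derive (2) by plugging the extreme rays from (1) into the formula for $\pi$ read from \eqref{matrix pi}.

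For (1), I will begin by reading off the defining inequalities of $\sigma_W[n]$. For $v = (a_1, \ldots, a_n, b_1, \ldots, b_{n+1})$, pairing against each column of \eqref{sigma W dual} yields one inequality per generator of $\sigma_W[n]^\vee$. The three column blocks, corresponding respectively to the monomials $x_i = s_i^{-1} t_1 \cdots t_{n+1}$, $y_i = s_i$, and $t_j$, produce the inequalities $B - a_i \geq 0$, $a_i \geq 0$, and $b_j \geq 0$, where $B := b_1 + \cdots + b_{n+1}$. Slicing at $B = 1$, the cross section becomes the product of the unit cube $[0,1]^n$ in the $a$-variables and the standard simplex $\Delta^n$ in the $b$-variables, whose vertices are precisely the vectors described in the statement of (1). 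These are therefore the extreme rays of $\sigma_W[n]$.

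For (2), I will substitute each such extreme ray into the formula
\[
\pi(v) = (B - a_n,\ a_1 - a_n,\ \ldots,\ a_{n-1} - a_n,\ a_n)
\]
obtained from \eqref{matrix pi}. Since $B = 1$ and the expression does not involve which $b_j$ equals $1$, the image depends only on $(a_1, \ldots, a_n) \in \{0,1\}^n$; hence $n+1$ extreme rays of $\sigma_W[n]$ collapse to a single image. When $a_n = 0$, the image equals $(1,\, e_I,\, 0)$ with $I = \{i : a_i = 1\} \subseteq \{1, \ldots, n-1\}$; when $a_n = 1$, it equals $(0,\, -e_J,\, 1)$ with $J = \{i \leq n-1 : a_i = 0\}$. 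As $I$ and $J$ range over all subsets of $\{1, \ldots, n-1\}$, these $2^n$ images exhaust the column vectors listed in \eqref{sigma (n)}, so $\pi(\sigma_W[n]) = \sigma^{(n)}$.

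The real content is part (2), and the mild subtlety will be the bookkeeping: one must verify that the generators of $\sigma^{(n)}$ indexed by primitive weight vectors $e_I$ in \eqref{sigma (n)} are precisely those with $I \subseteq \{1, \ldots, n-1\}$ (consistent with the maximal cones $s\delta^{(n)}$ forming $\sigma^{(n)}$ under the $\mathfrak S_n$-action), which exactly matches the range of the $\pi$-images computed above.
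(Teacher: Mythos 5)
Your proposal is correct and follows essentially the same route as the paper: both read off the defining inequalities $0\leqslant a_i\leqslant \sum_j b_j$, $b_j\geqslant 0$ from the columns of \eqref{sigma W dual}, identify the extreme rays as the vertices of a cube--simplex product in the slice $\sum_j b_j=1$, and then push these rays through $\pi$ to match the columns of \eqref{sigma (n)}. The bookkeeping point you flag at the end (that the weight vectors appearing are exactly $\pm e_I$ for $I\subseteq\{1,\dots,n-1\}$) is resolved the same way in the paper, which simply records the two cases $a_n=0$ and $a_n=1$.
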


\begin{proof}
  We have a list \eqref{sigma W dual} of generators for $\sigma _W[n]^{\vee}$.
  The two blocks on the right implies that if $v\in \sigma _W[n]$, all $a_i$ and $b_j$ are
  non-negative. The condition from leftmost block is
  \[
  a_i \leqslant \sum _{j=1}^{n+1} b_j \quad (i=1,\dots, n).
  \]
  Thus, $\sigma _W[n]$ is a family of hypercubes in $(a_1,\dots, a_n)$
  of size $\sum b_j$ over
  the positive orthant in $(b_1,\dots, b_{n+1})$. This proves (1).
  Let $v\in \sigma _W[n]$ be one of the generators listed in (1).
  Then, we have
  \[
\pi (v) =
\begin{pmatrix}
-a_n + 1 \\
a_1 - a_n \\
\vdots \\
a_{n-1}-a_n \\
a_n
\end{pmatrix}
\]
If $a_n=0$, $\pi(v)$ is of the form
\[
\left(
\begin{array}{c}
  1 \\ \hdashline[2pt/2pt]
  u \\ \hdashline[2pt/2pt]
  0
\end{array}
\right)
\]
where $u$ is either a positive primitive weight vector $e_I$ or zero. If $a_n=1$,
\[
\pi (v)= \left(
\begin{array}{c}
  0 \\ \hdashline[2pt/2pt]
  u \\ \hdashline[2pt/2pt]
  1
\end{array}
\right)
\]
where $u$ is either a negative primitive weight vector $-e_I$ or zero.
Comparing with \eqref{sigma (n)},
we immediately conclude that $\pi (\sigma _W[n])=\sigma ^{(n)}$.
\end{proof}

Next, let us calculate the polytopal part $P_b[n]$ of $\tilde P_b[n]$,
which is given by
\[
P_b[n] = P_W[n] \cap (\alpha _W[n]\otimes \mathbb R)^{-1}(-b),
\]
for $b=b_{GHH}^{W[n]}= {}^t\!\left(\frac{n}{n+1},\frac{2n}{n+1},\dots ,\frac{n^2}{n+1}\right)$.
Recalling that $P_W[n]$ is the image of the hypercube $\square _{n^2}$ under the
linear map $L[n]$, first we look at
\[
\square _{n^2}\cap (\alpha _W[n] L[n])^{-1}(-b).
\]
We represent a vector in $\mathbb R^{n^2}$ by a transpose of
\[
\left(
\begin{array}{ccc|ccc|ccc|ccc}
  c_{11} & \cdots & c_{1n} & c_{21} & \cdots & c_{2n} & & \cdots & & c_{n1} & \cdots & c_{nn}
\end{array}
\right).
\]
As one can easily check that the composition $\alpha _W[n]L[n]$ is given by
$(n,n^2)$-matrix
\[
\alpha _W[n]L[n] =
\left(
 \begin{array}{ccc|ccc|ccc|ccc}
  -1 & \cdots & -1 & 0 & \cdots & 0 & & & & 0 & \cdots & 0 \\
  0 & \cdots & 0 & -1 & \cdots & -1 & & & & 0 & \cdots & 0 \\
  &&& &&& &\ddots & \\
  0 & \cdots & 0 & 0 & \cdots & 0 & & & & -1 & \cdots & -1
\end{array}
\right),
\]
the subspace $(\alpha _W[n] L[n])^{-1}(-b)$ is cut out by $n$ hyperplanes
\[
H_i: c_{i1}+c_{i2}+\dots +c_{in} = \frac{in}{n+1}\quad (i=1,\dots, n).
\]
If we define a polytope $R_i[n]$ in the subspace with coordinates $(c_{i1},c_{i2}, \dots, c_{in})$
as the intersection $\square _n\cap H_i$, we get a decomposition
\[
\square _{n^2}\cap (\alpha _W[n] L[n])^{-1}(-b)
= R_1[n]\times \dots \times R_n[n].
\]
As $R_i[n]$ is a hyperplane cut of
a hypercube $\square _n$, a vertex of $R_i[n]$ is on a edge of $\square _n$, that is,
a vertex is of the form $(c_{i1},\dots, c_{in})$ with $c_{ij}=0$ or $1$
for all $j$ but a unique $k$ and $0\leqslant c_{ik}\leqslant 1$.
Combining with the equation for $H_i$, one sees that the vertex set of
$R_i[n]$ is given by
\[
w_i = {}^t \! (
\underbrace{1,\dots ,1}_{\scriptsize (i-1)},\frac{n-i+1}{n+1},0,\dots ,0)
\]
and its permutation of components $sw_i\; (s\in \mathfrak S_n)$.
This implies that the polytope $P_b[n]$,
the image of $\square _{n^2}\cap (\alpha _W[n] L[n])^{-1}(-b)$ under $L[n]$,
is the convex hull of vectors
\[
\Bigg\{
L[n]
\left(
\begin{array}{c}
s_1w_1 \\ \hdashline[1pt/1pt]
\vdots \\ \hdashline[1pt/1pt]
s_nw_n
\end{array}
\right)\;\; \Bigg| \;\;
s_1,\dots ,s_n\in \mathfrak S_n
\Bigg\}.
\]
As we have
\begin{equation}\label{L[n] explicit}
L[n]
\left(
\begin{array}{c}
c_{11} \\ \vdots \\ c_{1n} \\ \hdashline[1pt/1pt]
\vdots \\ \hdashline[1pt/1pt]
c_{n1} \\ \vdots \\ c_{nn}
\end{array}
\right)
=
\left(
\begin{array}{c}
- \sum _{i=1} ^n c_{i1} \\ \vdots  \\
- \sum _{i=1} ^n  c_{in}  \\ \hdashline[1pt/1pt]
0 \\
\sum _{j=1} ^n c_{1j} \\
\sum _{j=1} ^n c_{1j} + \sum _{j=1} ^n c_{2j} \\
\vdots \\
\sum _{j=1} ^n c_{1j} + \cdots + \sum _{j=1} ^n c_{nj}
\end{array}
\right),
\end{equation}
the last $(n+1)$ entries of
$L[n]
\left(
\begin{array}{c}
s_1w_1 \\ \hdashline[1pt/1pt]
\vdots \\ \hdashline[1pt/1pt]
s_nw_n
\end{array}
\right)$
is always $\!\!\!\!\mbox{\phantom{$\Big($}}^t\!
\left(0, \frac{n}{n+1}, \frac{3n}{n+1},\dots, \frac{\frac 12 n^2(n+1)}{n+1}\right)$ independent of
permutations $s_1,\dots, s_n\in \mathfrak S_n$. As the kernel of $\alpha _W[n]$ is a free $\mathbb Z$-module
with a basis
\[
\left(
\begin{array}{c}
1 \\ \vdots \\ 0 \\ \hdashline[1pt/1pt]
0 \\ \vdots \\ 0
\end{array}
\right),\; \cdots \;,
\left(
\begin{array}{c}
0 \\ \vdots \\ 1 \\ \hdashline[1pt/1pt]
0 \\ \vdots \\ 0
\end{array}
\right),\;
\left(
\begin{array}{c}
0 \\ \vdots \\ 0 \\ \hdashline[1pt/1pt]
1 \\ \vdots \\ 1
\end{array}
\right),
\]
it is sufficient to look at the image of $P_b[n]$ under a projection to
first $n$ components. The first $n$ components of
$L[n]
\left(
\begin{array}{c}
w_1 \\ \hdashline[1pt/1pt]
\vdots \\ \hdashline[1pt/1pt]
w_n
\end{array}
\right)$ is given by
\[
u = \!\!\mbox{\phantom{$\Big($}}^t\! \left(
-\frac{n^2+n-1}{n+1},
-\frac{n^2-3}{n+1},
\dots,
-\frac{n+3}{n+1},
-\frac{1}{n+1},
\right).
\]
Here we note that the difference of every two consecutive numbers is
$1+\frac{1}{n+1}$.

\begin{lemma}\label{vertex of P_b[n]}
The projection $\bar P_b[n]$ of $P_b[n]$ to the first $n$ components is the convex hull of
the set $\{ su \; |\; s\in \mathfrak S_n\}$. In particular, $P_b[n]$ agrees with the permutahedron
$P^{(n)}$ up to a multiplication of rational scalar and a translation.
\end{lemma}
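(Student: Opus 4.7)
The plan is to express $\bar P_b[n]$ as a negated Minkowski sum of the polytopes $R_i[n]$ and to identify its vertex set by maximising generic linear functionals. From the explicit formula \eqref{L[n] explicit}, the first $n$ entries of $L[n](c_1,\dots,c_n)$, with each $c_i$ regarded as a vector in $\mathbb{R}^n$, are given by $-\sum_{i=1}^n c_i$, so the description $P_b[n]=L[n](R_1[n]\times\dots\times R_n[n])$ immediately yields
\[
\bar P_b[n] \;=\; -\bigl(R_1[n]+R_2[n]+\dots+R_n[n]\bigr).
\]

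Next I would analyse this Minkowski sum vertex by vertex. Each $R_i[n]$ is the convex hull of the $\mathfrak S_n$-orbit of $w_i$, where $w_i$ has $(i-1)$ ones, one entry equal to $\frac{n-i+1}{n+1}$, and $(n-i)$ zeros. The key observation is that the non-zero entries of every $w_i$ are sorted in the same (weakly decreasing) order, so for a generic linear functional $\ell(x)=\sum_k c_k x_k$ with coefficients ordered as $c_{\tau(1)}>c_{\tau(2)}>\dots>c_{\tau(n)}$ for some $\tau\in\mathfrak S_n$, the unique maximiser of $\ell$ on $R_i[n]$ is obtained simultaneously for all $i$ by placing the ones on positions $\tau(1),\dots,\tau(i-1)$, the entry $\frac{n-i+1}{n+1}$ on position $\tau(i)$, and zeros on the remaining positions. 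Summing these maximisers over $i$, the $\tau(k)$-th entry of the resulting vertex of $\sum_i R_i[n]$ is
\[
(n-k)+\frac{n-k+1}{n+1}\;=\;-u_k.
\]
Since every vertex of a Minkowski sum of polytopes arises as the joint maximiser of some generic linear functional, and the $n!$ Weyl chambers produce $n!$ distinct such vertices (the entries of $u$ being pairwise distinct), the vertex set of $\sum_i R_i[n]$ equals the $\mathfrak S_n$-orbit of $-u$, and therefore the vertex set of $\bar P_b[n]$ is exactly $\{su:s\in\mathfrak S_n\}$. This establishes the first statement.

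For the ``in particular'' clause, the components of $u$ form an arithmetic progression with common difference $\frac{n+2}{n+1}$, so the convex hull of its $\mathfrak S_n$-orbit is the permutahedron $P^{(n)}$ rescaled by the rational factor $\frac{n+2}{n+1}$ and translated. Moreover, \eqref{L[n] explicit} shows that the last $(n+1)$ coordinates of every point of $P_b[n]$ are fixed independently of the choice of $s_1,\dots,s_n$, so the projection $P_b[n]\to\bar P_b[n]$ is an isomorphism of polytopes, giving the comparison with $P^{(n)}$. The main obstacle is the functional-maximising analysis of $\sum_i R_i[n]$; once the simultaneous-ordering principle is in place, everything reduces to the single combinatorial identity $(n-k)+\frac{n-k+1}{n+1}=-u_k$.
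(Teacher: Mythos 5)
Your proof is correct, but it takes a genuinely different route from the paper's. You first rewrite the projection as a negated Minkowski sum, $\bar P_b[n]=-\bigl(R_1[n]+\cdots+R_n[n]\bigr)$, and then determine its vertex set using the standard fact that every vertex of a Minkowski sum is the joint unique maximiser of a linear functional taken in the interior of its normal cone (hence with pairwise distinct coefficients); the rearrangement inequality, combined with the observation that all the $w_i$ are simultaneously sorted, identifies the joint maximiser over the chamber $c_{\tau(1)}>\cdots>c_{\tau(n)}$, and the identity $(n-k)+\tfrac{n-k+1}{n+1}=-u_k$ closes the computation. The paper instead argues on the level of inequalities: it takes the generating points $L[n](s_1w_1,\dots,s_nw_n)$ one at a time and checks, via the explicit half-space description $a_1+\cdots+a_k\geqslant -\tfrac{k^2(n+2)-k(2n^2+3n)}{2(n+1)}$ of the tangent cone $C$ of the convex hull of $\{su\}$ at the vertex $u$, that each generator lies in $C$, and then uses the diagonal $\mathfrak S_n$-symmetry to conclude containment in $\bigcap_s sC$, which is the convex hull of $\{su\}$. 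The combinatorial kernel is the same in both arguments, namely the monotonicity of the partial sums $\sum_{j\leqslant k}c_{ij}$ under permutation of a weakly decreasing vector. What your route buys is structure: it exhibits $\bar P_b[n]$ as a Minkowski sum of hypersimplex-type slices (so the appearance of a permutahedron is no surprise), and it determines the exact vertex set $\{su\mid s\in\mathfrak S_n\}$ rather than only the two containments, at the cost of invoking the face-decomposition theorem for Minkowski sums; the paper's route stays entirely within elementary linear inequalities. Both treatments of the ``in particular'' clause are the same in substance, resting on the constancy of the last $n+1$ coordinates and the arithmetic-progression form of $u$ with common difference $1+\tfrac{1}{n+1}$.
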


\begin{proof}
It is sufficient to show that
$L[n]
\left(
\begin{array}{c}
s_1w_1 \\ \hdashline[1pt/1pt]
\vdots \\ \hdashline[1pt/1pt]
s_nw_n
\end{array}
\right)$ is in the convex hull of $\{su\; |\; s\in \mathfrak S_n\}$ for
every $(s_1,\dots, s_n)\in (\mathfrak S_n)^n$.
By symmetry under the diagonal action of $\mathfrak S_n$,
this is equivalent to say that
$L[n]
\left(
\begin{array}{c}
s_1w_1 \\ \hdashline[1pt/1pt]
\vdots \\ \hdashline[1pt/1pt]
s_nw_n
\end{array}
\right)$
is in a cone $C$ spanned by \{$(i\; i+1)u\; |\; i=1,\dots, n-1\}$ with the vertex
$u$. One can easily check that the cone $C$ is defined by
\[
\begin{aligned}
a_1+\dots +a_k &\geqslant -\frac{k^2(n+2)-k(2n^2+3n)}{2(n+1)}\mbox{ for }k=1,\dots, n-1,\mbox{ and}\\
a_1+\dots +a_n &= -\frac{n^2}2,
\end{aligned}
\]
where $a_1,\dots, a_n$ are the first $n$ coordinates of $M_W[n]_{\mathbb R}$
as in Lemma \ref{conical part}. As $w_i$ satisfies
\[
c_{i1}\geqslant c_{i2}\geqslant \dots \geqslant c_{in},
\]
for all $i$, the sum $a_1+\dots +a_k=- \sum _{i=1}^n\sum _{j=1}^k c_{ij}$ only increases
under the action of $(s_1,\dots, s_n)\in (\mathfrak S_n)^n$ on
$\left(
\begin{array}{c}
w_1 \\ \hdashline[1pt/1pt]
\vdots \\ \hdashline[1pt/1pt]
w_n
\end{array}
\right)$.
\end{proof}

Now we finish the proof of Theorem \ref{quotiont of W[n]}.
On one hand $\tilde Z^{(n)'}$ is the toric variety corresponding to
a polyhedron $\tilde P^{(n)}=\sigma ^{(n)\vee}+\iota P^{(n)}$ by
Propoisiton \ref{polytope of Znp}. On the other hand,
the quotient $W[n]^{ss}\git G[n]$ is also a toric variety determined by
the polyhedron $\tilde P_b[n]$. By Lemma \ref{conical part},
the conical part of $\sigma '[n]$ coincides with $\sigma ^{(n)\vee}$
under the basis of $M'[n]$ consisting of column vectors of the transpose ${}^t\pi$ of
$\pi$ in \eqref{matrix pi}. Let us denote by $Q'$ the matrix of basis change
such that
\[
{}^t\pi Q' =
\left(
\begin{array}{ccc;{2pt/2pt}c}
&&& 0 \\
& I_n & & \vdots \\
&&& 0 \\ \hdashline[2pt/2pt]
&&& 1 \\
& O_{n+1} & & \vdots \\
&&& 1
\end{array}
\right).
\]
Then, we have
\[
\iota P^{(n)} = \frac{n+1}{n+2} \cdot Q' (P_b[n] - u).
\]
This implies that $\tilde P_b[n]$ agrees with the polyhedron $\tilde P^{(n)}$
after a multiplication of rational scalar $\frac{n+1}{n+2}$ and a translation.
Therefore, after taking sufficinetly high trancation of the graded rings,
we get an isomorphism $Z^{(n)\prime}=X(\tilde P^{(n)})\cong X(\tilde P_b[n])
= W[n]^{ss}\git G[n]$.

\begin{remark}\label{recovering original family}
In the same way, one can easily verfy that
$X[n]\git G[n]$ is isomorphic to the original family $X$.
In this case, the polytopal part $P_b$ of the quotient becomes
just one point so that the polyhedron $\tilde P_b$ is just
a cone that corresponds to the invariant ring of $X\times _{\mathbb A^1} \mathbb A^{n+1}$.
\end{remark}

\section{Hilbert-Chow morphism for GHH degeneration}

\subsection{GHH degeneration of Hilbert schemes}

Let us define $S=X\times \mathbb A^{m-1}$ and endow it with a morphism
$S\to C=\mathbb A^1$ given by the composition
\[
S\overset{\scriptsize \pr _1}{\lto}X\lto C=\mathbb A^1.
\]
This is a local model for a \emph{simple degeneration} in the sense of
\cite{GHH}, Definition 1.1. The expanded degeneration of the family $S\to C$ is just
given by the composition
\[
S[n] := X[n]\times \mathbb A^{m-1} \to  X[n]\to \mathbb A^{n+1}.
\]
Here we remark that the torus $G[n]$ acts trivially on the factor $\mathbb A^{m-1}$ in $S$ or $S[n]$.
Gulbrandsen, Halle, and Hulek considered in \emph{op. cit.}
the relative Hilbert scheme
\[
\Hilb ^n(S[n]/\mathbb A^{n+1})\to \mathbb A^{n+1}
\]
of the expanded degeneration $S[n]\to \mathbb A^{n+1}$.
Since the Hilbert scheme admits a natural action of $G[n]$, they define
\[
I^n_{S/C} = \Hilb ^n(S[n]/\mathbb A^{n+1})^{ss} \git G[n],
\]
where the GIT stability and the GIT quotient are considered
under the GHH linearization as in
\S\ref{GHH linearization}. $I^n_{S/C}$ has a natural morpshim
\[
I^n_{S/C}\to \mathbb A^{n+1}\git G[n]\cong \mathbb A^1,
\]
whose general fiber over $t\in \mathbb A^1$ is isomorphic to $\Hilb ^n(S_t)$, where
$S_t$ is the fiber over $t$ of the original semistable family $S\to \mathbb A^1$.
Let us call the family $I^n_{S/C}\to \mathbb A^1$
\emph{Gulbrandsen-Halle-Hulek degeneration}, or
\emph{GHH degeneration} of
Hilbert schemes associated with the family $S\to C$.

This construction is most interesting in the case where $m=2$, namely in the case where
$S\to C$ is a semistable family of surfaces whose singular fiber has no triple point.
As $\Hilb ^n(S[n]/\mathbb A^{n+1})\to \Sym ^n(S[n]/\mathbb A^{n+1})$ is $G[n]$-equivarinant,
we get a projective birational morphism
\[
\Psi : I^n_{S/C} \to \Sym^n (S[n]/\mathbb A^{n+1})^{ss} \git G[n].
\]
We call the morphism $\Psi$ \emph{Hilbert-Chow moprhism} of GHH degeneration.

\subsection{A small partial resolution $Z^{(n)}$ of $\Sym^n(S/C)$}
To analyze a degeneration of Hilbert schemes, we have another approach, namely
we can also start from the symmetric product
$\Sym ^n(S/C)$ of the family $S\to C$. As $S=X\times \mathbb A^1$,
it is just an $\mathfrak S_n$-quotient of $(S/C)^n = \tilde X^{(n) \prime} \times \mathbb A^n$.
The projective toric small resolution $\tilde Z^{(n)\prime}\to \tilde X^{(n)\prime}$
as in Proposition \ref{small resolu} immediately gives a toric small resolution
\[
\tilde Z^{(n)}=\tilde Z^{(n)\prime}\times \mathbb A^{n} \to \tilde X^{(n)\prime}\times \mathbb A^{n}=(S/C)^n.
\]
We note that the resolution is $\mathfrak S_n$-equivariant. By taking $\mathfrak S_n$-quotient of
both sides, we get a projective small resolution
\[
Z^{(n)}:=\tilde Z^{(n)}/\mathfrak S_n \to \Sym ^n(S/C).
\]
The self-product $(S[n]/\mathbb A^{n+1})^n$
of the expanded degeneration $S[n]=X[n]\times \mathbb A^1\to \mathbb A^{n+1}$
is just
\[
(S[n]/\mathbb A^{n+1})^n = W[n]\times \mathbb A^n\overset{\pr _1}{\lto} W[n]\lto \mathbb A^{n+1}.
\]
Again the torus $G[n]$ acts trivially on the factor $\mathbb A^n$,
and Theorem \ref{quotiont of W[n]} gives an isomorphism
\[
\tilde \varepsilon ^{(n)}: (W[n]\times \mathbb A^n)^{ss} \git G[n]\overset{\sim}{\lto}
\tilde Z^{(n)}=\tilde Z^{(n)\prime}\times \mathbb A^n.
\]
Moreover the $G[n]$-action on $W[n]\times \mathbb A^n$ commutes with the natural
$\mathfrak S^n$-action, $\tilde \varepsilon ^{(n)}$ descends to an isomorphism
\[
\varepsilon ^{(n)}: \Sym ^n(S[n]/\mathbb A^{n+1})^{ss}\git G[n] \to Z^{(n)},
\]
and thus we have a natural birational morphism
\[
\psi ^{(n), GHH} = \varepsilon ^{(n)}\circ \Psi :
I^n_{S/C}\to Z^{(n)}.
\]
Here we note that there is a commutative diagram
\begin{equation}\label{double quotients}
\xymatrix{
(W[n]\times \mathbb A^n)^{ss} \ar[r] \ar[d]
& \Sym^n(S[n]/\mathbb A^{n+1})^{ss} \ar[d]\\
\tilde Z^{(n)} \ar[r] &Z^{(n)} \\
}
\lower42pt\hbox{.}
\end{equation}

\subsection{A crepant resolution of $Z^{(n)}$}
By the construction, the general fiber of $Z^{(n)}\to C=\mathbb A^1$ is just
the symmetric product of the general fiber $\Sym ^n(S_t)$. More precisely,
as the restriction of the family $S\to C$ to $C^{\circ}=C\backslash \{0\}$ is a trivial
family of $\mathbb C^*\times \mathbb A^1$, the restriction
\[
Z^{(n)\circ}:=Z^{(n)}\times _C C^{\circ}\to C^{\circ}
\]
is a trivial family of $\Sym ^n(\mathbb C^*\times \mathbb A^1)$. Therefore,
the ordinary Hilbert-Chow morphism gives a crepant divisorial resolution
\[
\psi ^{(n)\circ} :Y^{(n)\circ} \to Z^{(n)\circ}.
\]
In \cite{N}, Theorem 4.1, we constructed an extension of $\psi ^{(n)\circ}$ to
a projective crepant divisorial birational morphism
\[
\psi ^{(n)}:Y^{(n)}\to Z^{(n)}.
\]
For $Z\in \Hilb ^n(S[n]/\mathbb A^{n+1})$, we define $t_i(Z)$ to be the
$i$-the component of the image of $Z$ in $\mathbb A^{n+1}$.
Since $Z\in \Hilb ^n(S[n]/\mathbb A^{n+1})$ has trivial stabilizer under
the action of $G[n]$ if $t_i(Z)\neq 0$ for all $i$,
we see that the restriction of $\psi ^{(n), GHH}$ agrees with the trivial
family of Hilbert-Chow morphisms $\psi ^{(n)\circ}$.
In the rest of the article, we prove the following

\begin{theorem}\label{comparison theorem}
The Hilbert-Chow morphism of GHH degeneration $\Psi$ (or $\psi ^{(n),GHH}$)
is isomorphic to the projective crepant divisorial partial resolution $\psi ^{(n)}$.
\end{theorem}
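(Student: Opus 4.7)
The plan is to use the stack isomorphism $[\Sym^n(S[n]/\mathbb A^{n+1})^{ss}/G[n]]\cong[\tilde Z^{(n)}/\mathfrak S_n]$ from Theorem \ref{thm: comparison of stabilizers} to transfer the GIT Hilbert--Chow morphism $\Psi$ to an $\mathfrak S_n$-equivariant morphism over $\tilde Z^{(n)}$, and then to identify this with the $\mathfrak S_n$-equivariant pullback $\tilde Y^{(n)}:=Y^{(n)}\times_{Z^{(n)}}\tilde Z^{(n)}$ of $\psi^{(n)}$.

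First, the relative Hilbert--Chow morphism $\Hilb^n(S[n]/\mathbb A^{n+1})^{ss}\to\Sym^n(S[n]/\mathbb A^{n+1})^{ss}$ is $G[n]$-equivariant, so it descends to a morphism of quotient stacks whose target is identified with $[\tilde Z^{(n)}/\mathfrak S_n]$. Base-changing along the atlas $\tilde Z^{(n)}\to[\tilde Z^{(n)}/\mathfrak S_n]$ then yields an $\mathfrak S_n$-equivariant projective birational morphism $\tilde\Psi:\tilde I^n\to\tilde Z^{(n)}$ whose $\mathfrak S_n$-quotient recovers $\psi^{(n),GHH}:I^n_{S/C}\to Z^{(n)}$. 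Thus it is sufficient to produce an $\mathfrak S_n$-equivariant isomorphism $\tilde I^n\cong\tilde Y^{(n)}$ over $\tilde Z^{(n)}$.

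Over the open locus $\tilde Z^{(n)\circ}$ lying above $C^\circ=C\setminus\{0\}$, the $G[n]$-action is free and the Hilbert--Chow morphism restricts to the classical one for the smooth surface $\mathbb C^*\times\mathbb A^1$, so $\tilde I^n$ and $\tilde Y^{(n)}$ are canonically and $\mathfrak S_n$-equivariantly isomorphic there. Both objects are projective over the affine $\tilde Z^{(n)}$ and normal ($Y^{(n)}$ is a crepant divisorial partial resolution of the Gorenstein $Z^{(n)}$, while $\tilde I^n$ inherits normality from the relative Hilbert scheme), so the remaining content is to extend this open isomorphism across the singular fiber. I would carry out the extension by matching toric data: $\tilde Y^{(n)}\to\tilde Z^{(n)}$ is the combinatorially prescribed refinement associated with the polyhedron $\tilde P^{(n)}$ of Proposition \ref{polytope of Znp}, and one aims to exhibit $\tilde I^n$ as the same toric modification.

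The hard part is that $\Hilb^n(S[n]/\mathbb A^{n+1})$ is not itself a toric variety, so Proposition \ref{torus quotient via polyhedron} cannot describe $\tilde I^n$ directly. The likely way around this is to run the argument in the opposite direction: construct a candidate universal flat family of length-$n$ subschemes on $\tilde Y^{(n)}$ from the explicit local toric structure in \cite{N}, descend it to $Y^{(n)}$, and invoke the universal property of $I^n_{S/C}$ to obtain a comparison morphism. Checking that this morphism is an isomorphism compatible with the GHH linearization and the $\mathfrak S_n$-action on $\tilde Z^{(n)}$ — equivalently, that both structures pin down the same $\mathfrak S_n$-equivariant combinatorial refinement of the polyhedron $\tilde P^{(n)}$ — is where the essential work lies.
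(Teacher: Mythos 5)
Your overall reduction---compare both morphisms over $Z^{(n)}$, observe that they agree over $C^{\circ}$, and extend across the central fibre---is the right shape, but the comparison object you set up and the extension mechanisms you propose do not work. The morphism $\psi^{(n)}:Y^{(n)}\to Z^{(n)}$ is \emph{not} the $\mathfrak S_n$-quotient of an equivariant toric modification of $\tilde Z^{(n)}$: by construction the local resolution $\widehat U_{\tilde q}\to U_{\tilde q}$ only exists after first passing to the intermediate quotient $U_{\tilde q}=\tilde U_{\tilde q}/\Stab^0_{\mathfrak S_n}(\tilde q)$ by the ``trivial angle type'' Young subgroup, which is where the local model $\Sym^{\mu}(\mathbb A^2)\times\mathbb A^1$ and the (non-toric) Hilbert--Chow resolution of $\mathbb A^2$ appear; the polyhedron $\tilde P^{(n)}$ of Proposition \ref{polytope of Znp} describes $\tilde Z^{(n)\prime}$ itself, not any refinement giving $Y^{(n)}$, so the fibre product $Y^{(n)}\times_{Z^{(n)}}\tilde Z^{(n)}$ is not the natural object and ``matching toric data'' cannot identify it. Relatedly, the literal stack isomorphism $[\Sym^n(S[n]/\mathbb A^{n+1})^{ss}/G[n]]\cong[\tilde Z^{(n)}/\mathfrak S_n]$ you start from fails at cycles with multiplicities: Theorem \ref{thm: comparison of stabilizers} identifies $\Stab_{G[n]}(\gamma)$ with the \emph{quotient} $G(\tilde q)=\Stab_{\mathfrak S_n}(\tilde q)/\Stab^0_{\mathfrak S_n}(\tilde q)$, not with $\Stab_{\mathfrak S_n}(\tilde q)$, so the two stacks have automorphism groups of different orders wherever $\Stab^0\neq 1$. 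Finally, ``normal, projective over an affine base, and isomorphic over a dense open set'' does not force an isomorphism (distinct small modifications satisfy all three), and the universal property you invoke is not available: $I^n_{S/C}$ is a GIT quotient, not a fine moduli space of subschemes of the fibres of $S\to C$, so producing a ``universal flat family on $\tilde Y^{(n)}$'' is the whole difficulty rather than a way around it.

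The paper's actual mechanism is local and slice-theoretic. At $\gamma\in\Sym^n(S[n]/\mathbb A^{n+1})^{ss}$ one writes down an explicit $\Stab_{G[n]}(\gamma)$-invariant slice $V_{\gamma}$ to the $G[n]$-action and shows (Theorem \ref{thm: comparison of stabilizers}) that it is $G(\tilde q)$-equivariantly isomorphic to the chart $U_{\tilde q}\subset\Sym^n(\mathbb A^2)\times\mathbb A^1$, with $\Stab_{G[n]}(\gamma)\cong G(\tilde q)$. Over this slice the relative Hilbert scheme of the expanded degeneration restricts to the ordinary Hilbert--Chow morphism $\Hilb^n(\mathbb A^2)\times\mathbb A^1\to\Sym^n(\mathbb A^2)\times\mathbb A^1$, which is exactly the local model $\widehat U_{\tilde q}\to U_{\tilde q}$ defining $Y^{(n)}$; the universality of $\Hilb^n(\mathbb A^2)$ (a genuine fine moduli space) lifts the slice isomorphism to an equivariant isomorphism $\hat V_{\gamma}\cong\hat U_{\tilde q}$, and these local identifications glue to an isomorphism of Deligne--Mumford stacks $\mathscr I^n_{S/C}\cong\mathscr Y^{(n)}$ over $Z^{(n)}$, hence of coarse spaces. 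The idea missing from your proposal is precisely this passage to Luna slices and intermediate quotients, where the classical Hilbert scheme of the smooth surface $\mathbb A^2$ takes over and its universal property can legitimately be used.
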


\subsection{Orbifold structures}\label{orbifold structures}
The key to prove the theorem is natural orbifold structures on
$I^n_{S/C}$ and $Y^{(n)}$. The orbifold structure on $I^n_{S/C}$ is explained in
\cite{GHH}, \S3: by the numerical criterion of stability (\cite{GHH}, Theorem 2.9),
a semistable point in $\Hilb ^n(S[n]/\mathbb A^{n+1})$ is automatically stable
under the GHH linearlizaiton,
and therefore the stabilizer subgroup of $G[n]$ at a point is finite.
Since the semistable locus $\Hilb ^n(S[n]/\mathbb A^{n+1})^{ss}$ is
contained in the smooth locus of $\Hilb ^n(S[n]/\mathbb A^{n+1})\to \mathbb A^{n+1}$
(\cite{GHH}, Lemmta 3.6 and 3.7),
Luna's \'etale slice theorem implies that the quotient stack
\[
\mathscr I^n_{S/C}=[\Hilb ^n(S[n]/\mathbb A^{n+1})^{ss} / G[n]]
\]
is a smooth Deligne-Mumford stack.
The canonical morphism to the coarse moduli scheme $\mathscr I^n_{S/C}\to I^n_{S/C}$ gives
the orbifold structure on $I^n_{S/C}$.
Similarly, the GIT quotient $\Sym ^n(S[n]/\mathbb A^{n+1})^{ss}\git G[n]$ has a natural covering
structure coming from the corresponding quotient stack $[\Sym ^n(S[n]/\mathbb A^{n+1})^{ss}/ G[n]]$.
Again by the numerical criterion of stability (\emph{loc. cit.}, see also the following remark),
one sees that the stack is also (non-smooth) Deligne-Mumford stack.

\begin{remark}\label{remark on numerical criterion}
Although the numerical criterion of stability \cite{GHH}, Theorem 2.9 is stated
only for a point in the Hilbert scheme $\Hilb ^n(S[n]/\mathbb A^{n+1})$,
one can easily verify that the proof
works in exactly the same way for the case of
the symmetric product $\Sym ^n(S[n]/\mathbb A^{n+1})$ and a self-fiber product $(S[n]/\mathbb A^{n+1})^n$.
We will repeatedly use this fact.
\end{remark}

On the other hand, $Y^{(n)}$ also has a natural orbifold structure. We recall the
construction of $Y^{(n)}$ in some detail (we refer \cite{N}, in particular
\S 2.7, Lemma 2.9, Lemma 4.5, and \S 4.6, for full detail).
Let us take a point $q\in Z^{(n)}$ and $\tilde q=(\tilde q_1,\tilde q_2)\in \tilde Z^{(n)}
=\tilde Z^{(n)\prime}\times \mathbb A^n$, a point above $q$. We have a sequence of
$\mathfrak S_n$-equivariant projections
\[
Z^{(n)\prime}=X(\Delta ^{(n)})\overset{\mbox{\scriptsize $\mathbb A^2$-bundle}}{\lto}
 X(A_{n-1})\overset{\mbox{\scriptsize birational}}{\lto}
 \mathbb P^{n-1}.
\]
If we denote the homogeneous coordinate on $\mathbb P^{n-1}$ by $[\xi _1:\dots :\xi _n]$
and take the coordinate $\displaystyle \left(\frac{\xi_1}{\xi _n},\dots, \frac{\xi _{n-1}}{\xi _n}\right)$
for the torus $(\mathbb C^*)^{n-1}\subset \mathbb P^{n-1}$,
an toric affine open neighborhood $X(\bar \delta ^{(n)})$ of $X(A_{n-1})$ is $\mathbb A^{n-1}$ with
the cooridantes
\[
\left(
\frac{\xi_1}{\xi_2},\frac{\xi_2}{\xi_3},\dots ,
\frac{\xi_{n-1}}{\xi_n}
\right)
\]
by \eqref{edge cone}, and therefore the toric coordinate on $x(s\bar \delta ^{(n)})
; (s\in \mathfrak S_n)$ is given
\[
\left(
\frac{\xi _{s(1)}}{\xi _{s(2)}},\frac{\xi _{s(2)}}{\xi _{s(3)}},\dots ,
\frac{\xi _{s(n-1)}}{\xi _{s(n)}}
\right)
\]
for some $s\in \mathfrak S_n$. Let us decompose $s\in \mathfrak S_n$ into cycles as
\[
s = (i_1\; \dots\; i_{l_1})(i_{l_1+1}\; \dots \; i_{l_2})\dots
(i_{l_{r-1}+1}\; \dots \; i_{l_r}),
\]
If $\tilde q_1$ is fixed by $s$, we necessarily have
\[
\frac{\xi _{i_{l_{k-1}+1}}}{\xi _{i_{l_{k-1}+2}}} =
\frac{\xi _{i_{l_{k-1}+2}}}{\xi _{i_{l_{k-1}+3}}} = \cdots =
\frac{\xi _{i_{l_k-1}}}{\xi _{i_{l_k}}} = \alpha _k
\]
for $k=1,\dots, r$, where $l_0=0$ by convention and
$\alpha_k$ is an $(l_k-l_{k-1})$-th root of unity.
We say that $\tilde q$ is an $s$-fixed point of \emph{trivial angle type} if
\[
\alpha _1 = \dots = \alpha _r = 1.
\]
One can easily see that $\tilde q$ is an $s$-fixed point of trivial angle if and only if
\[
\frac{\xi _i}{\xi _{s(i)}}=1 \quad \mbox{for all $i$ with $s(i)\neq i$}
\]
and $\tilde q_2\in \mathbb A^n$ is an $s$-fixed point with respect to the standard
permutation action. From this characterization, one sees that
\begin{equation}\label{stab of trivial angle type}
\Stab _{\mathfrak S_n}^0(\tilde q)=\{s\in \Stab _{\mathfrak S_n}(\tilde q)\; |\;
\mbox{$\tilde q$ is an $s$-fixed point of trivial angle type} \}
\end{equation}
is a Young subgroup of $\mathfrak S_n$ and is a normal subgroup of $\Stab _{\mathfrak S_n}(\tilde q)$
(\cite{N}, Lemma 4.5). The tangent space
$T_{\tilde q_1}\tilde Z^{(n)\prime}\cong \mathbb C^{n+1}$ seen as a representation of
$\Stab _{\mathfrak S_n}^0(\tilde q)$
is a direct sum of the restriction of standard permutation representation and a one dimensional trivial
representation. Therefore, for a sufficiently small neighborhood
$\tilde U_{\tilde q}\subset \tilde Z^{(n)}$ of $\tilde q$,
the quotient $U_{\tilde q}=\tilde U_{\tilde q}/\Stab _{\mathfrak S_n}(\tilde q)$ is isomorphic to an open neighborhood
of $(\gamma, 0)\in \Sym ^{n}(\mathbb A^2)\times \mathbb A^1$,
where $\gamma = \sum \mu _i p_i \in \Sym ^{n}(\mathbb A^2)$ if $\Stab _{\mathfrak S_n}^0(\tilde q)$
is isomorphic to a Young subgroup $\mathfrak S_{\mu}$ associated with a partition $\mu=(\mu _i)$ of $n$.
Now restricting the Hilbert-Chow morphism
$\Hilb ^n(\mathbb A^2)\times \mathbb A^1\to \Sym ^n(\mathbb A^2)\times \mathbb A^1$ to $U_{\tilde q}$,
we get a crepant divisorial resolution
\[
\widehat U_{\tilde q} \to U_{\tilde q}=\tilde U_{\tilde q}/\Stab _{\mathfrak S_n}^0(\tilde q).
\]
As $\Stab _{\mathfrak S_n}^0(\tilde q)$ is a normal subgroup of $\Stab _{\mathfrak S_n}(\tilde q)$,
the quotient group
\begin{equation}\label{quotient stabilizer}
G(\tilde q)=\Stab _{\mathfrak S_n}(\tilde q)/\Stab _{\mathfrak S_n}^0(\tilde q)
\end{equation}
acts on $U_{\tilde q}$ and moreover the action lifts to $\widehat U_{\tilde q}$.
Nothing that $U_{\tilde q}/G(\tilde q)$ is isomorphic to a neighborhood of
the image $q\in Z^{(n)}$ of the point $\tilde q\in \tilde Z^{(n)}$,
we see that the quotients with canonical morphism
\[
\widehat U_{\tilde q}/G(\tilde q) \to U_{\tilde q}/G(\tilde q)\subset Z^{(n)}
\]
patch together along $Z^{(n)}$ to give a crepant partial resolution
\[
\psi ^{(n)}:Y^{(n)}\to Z^{(n)}.
\]
Therefore, the family of quotient stacks $\{[\widehat U_{\tilde q}/G(\tilde q)]\to Z^{(n)}\}$
defines a smooth Deligne-Mumford stack $\mathscr Y^{(n)}$ whose coarse moduli space
is $Y^{(n)}$.

\subsection{Semistable locus $W[n]^{ss}$ and the quotient map}
In this subsection, we describe explicitly the local behavior of the quotient map
$W[n]^{ss}\to W[n]\git G[n]=\tilde Z^{(n)\prime}$.

The cone $\sigma [n]$ corresponding to the base change
$X(\sigma [n])= X\times _{\mathbb A^1}\mathbb A^{n+1}$ is generated by
the column vectors of $(n+2,2n)$-matrix
\[
\sigma [n] =
\left(
\begin{array}{ccccccc}
  0 & 1 & 0 & 1 & \cdots & 0 & 1 \\ \hdashline[2pt/2pt]
  1 & 1 & 0 & 0 & \cdots & 0 & 0 \\
  0 & 0 & 1 & 1 & \cdots & 0 & 0 \\
   \vdots & \vdots & \vdots & \vdots & \ddots & \vdots & \vdots  \\
  0 & 0 & 0 & 0 & \cdots & 1 & 1
\end{array}
\right).
\]
We can see this as follows;
let $\sigma _X$ be a cone generated by $\begin{pmatrix} 0 \\ 1\end{pmatrix}$ and
$\begin{pmatrix}1 \\ 1\end{pmatrix}$ in $N_{X,\mathbb R}=\mathbb R^2$,
and $\sigma _{\mathbb A^{n+1}}$
be the positive orthant in $N_{\mathbb A^{n+1},\mathbb R}=\mathbb R^{n+1}$.
Then, by \cite{N}, Lemma 1.5, $\sigma [n]$ is just a fiber product of
cones $\sigma _X\times _\mathbb R \sigma _{\mathbb A^{n+1}}$
under the maps
\[
(0\; 1):N_X=\mathbb Z^2\to \mathbb Z,\quad \mbox{and} \quad
(1\; 1\; \cdots \; 1) : N_{\mathbb A^{n+1}}=\mathbb Z^{n+1}\to \mathbb Z.
\]
Then the cone $\sigma _W[n]$ corresponding to the self-product
\[
X(\sigma[n])\times _{\mathbb A^{n+1}} \cdots \times _{\mathbb A^{n+1}} X(\sigma [n])
\]
is given by the fiber product $\sigma _W[n]$ of $n$-copies of the cone $\sigma [n]$ with respect to
the projection to the last $n$-factors, which is generated by the vectors
\[
v_{I,j} =
\left(
\begin{array}{c}
e_I \\ \hdashline[2pt/2pt]
e_j
\end{array}
\right)
\]
for $e_I = \sum _{i\in I} e_i \in \mathbb Z^n$ with a (possibly empty) subset
$I\subset \{1,\dots, n\}$
and $e_i\in \mathbb Z^n\; (i=1,\dots, n)$ the standard basis,
and similarly for $e_j\in \mathbb Z^{n+1}\; (j=1,\dots, n+1)$.
As $W[n]\to (X(\Sigma [n]))/\mathbb A^{n+1})^n$ is a toric small birational morphism,
the set of rays in the normal fan $\Sigma _W$ to the polyheron $\tilde P_W[n]$ coicides
with the set of rays generated by $v_{I,j}$. Therefore, the set of torus invariant divisors on $W[n]$
is in one to one correspondence with the set $\{v_{I,j}\}$. We denote by $D_{I,j}$
the torus invariant divisor corresponding to $v_{I,j}$.
$\tilde P_W[n]$ is cut out by
halfspaces defined by
\[
v_{I,j}\geqslant d_{I,j}
\]
for some $d_{I,j}\in \mathbb Z$,
where $v_{I,j}$ is seen as a linear functional on $M_W[n]$. The ample divisor corresponding to
the polyhedron $\tilde P_W[n]$ is given by
\[
D_{\tilde P_W[n]} = \sum (-d_{I,j})D_{I,j}.
\]
Since the functional
$v_{I,j}$ is non-negative on the conical part $\sigma _W[n]$, the constants $d_{I,j}$ is
defined by
\[
d_{I,j} = \min (\{ \langle v_{I,j} ,u\rangle \; | \; u \in P_W[n]\}\cup \{0\} ).
\]
As the polytopal part $P_W[n]$ is the image of hypercube $\square _{n^2}$ under $L[n]$,
we have
\[
d_{I,j} = \min ( \{ \langle {\,}^t L[n]v_{I,j},\tilde u\rangle \; | \; \tilde u\in \square _{n^2}\}
\cup \{0\}).
\]
On the other hand, the equality
\[
{}^t L[n] v_{I,j} =
\left(
\begin{array}{c}
\Vspc e_{I^c} \\
\Vspc \vdots \\
\Vspc e_{I^c} \\ \hdashline[2pt/2pt]
\Vspc -e_I \\
\Vspc\vdots \\
\Vspc -e_I
\end{array}
\right)
\begin{array}{@{\kern-\nulldelimiterspace}l@{}}
    \left.\begin{array}{@{}c@{}}\Vspc\\\Vspc\\\Vspc\end{array}\right\}\mbox{\scriptsize $j-1$} \\
    \left.\begin{array}{@{}c@{}}\Vspc\\\Vspc\\\Vspc\end{array}\right.
  \end{array}
\]
infers that
\[
d_{I,j} = -(n-j)\cdot \#(I).
\]
The space of sections of $\mathcal O(kD_{\tilde P_W[n]})$ is isomorphic to
the vector space spanned by the monomials $m\in k P_W[m]$.
Therefore the complete linear system $|kD_{\tilde P_W[n]}|$ consists of divisors
of the form
\[
\sum ( \langle v_{I,j},m \rangle - kd_{I,j}) D_{I,j}.
\]
Therefore, the subsystem of $G[n]$-invariant divisors is given by
\[
\Lambda _{b,k}=
\left\{
\sum ( \langle v_{I,j},m \rangle - kd_{I,j}) D_{I,j}\; |\;
m \in k P_b[n]\right\} .
\]

\begin{proposition}\label{unstable locus}
The stable base locus $\bigcap _k \mbox{\rm Bs}(\Lambda _{b,k})$, namely the locus of
unstable points with respect to the GHH-linearlization is
\[
W[n]\backslash W[n]^{ss} =
\bigcup _{\#(I)\neq j} D_{I,j}.
\]
\end{proposition}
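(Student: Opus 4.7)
The plan is to apply the polyhedral description of the invariant linear subsystem $\Lambda_{b,k}$ just established. In that description a divisor $D_{I,j}$ appears with coefficient $\langle v_{I,j}, m\rangle - kd_{I,j}$ in the divisor associated with $m \in k\tilde P_b[n] \cap M_W[n]$, so $D_{I,j}$ lies in the stable base locus $\bigcap_k \mathrm{Bs}(\Lambda_{b,k})$ exactly when this coefficient is strictly positive for every such $m$. Clearing integrality by divisibility reduces the condition to
\[
\min_{m \in \tilde P_b[n]} \langle v_{I,j}, m\rangle > d_{I,j}.
\]
Since $v_{I,j} \in \sigma_W[n]$ pairs non-negatively with $\bar\sigma^\vee \subset \sigma_W[n]^\vee$, this minimum is already attained on the bounded part $P_b[n]$, so the problem reduces to a linear optimization over the vertices of $P_b[n]$.

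I would then invoke Lemma \ref{vertex of P_b[n]}: a vertex of $P_b[n]$ has the form $L[n](s_1 w_1,\dots,s_n w_n)^T$, with the row sums $\sum_k (s_i w_i)_k = in/(n+1)$ fixed by the defining affine constraint. Using the explicit formula \eqref{L[n] explicit}, the pairing decomposes as a $j$-dependent constant minus $\sum_i \sum_{k \in I}(s_i w_i)_k$, so minimization is equivalent to maximizing the latter. This sum decouples across rows, and for each $i$ the maximum over $s_i \in \mathfrak{S}_n$ is $\min(\#(I),\,in/(n+1))$, achieved by placing the $(i-1)$ ones and, when $\#(I) \geq i$, the fractional entry $(n-i+1)/(n+1)$ of $w_i$ into positions indexed by $I$. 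Setting $p := \#(I)$ and comparing with $d_{I,j} = -(n-j)p$, the resulting difference factors as
\[
\min_{m\in P_b[n]}\langle v_{I,j},m\rangle - d_{I,j}
= (j-p)\cdot\frac{n(j+1) - p(n+2)}{2(n+1)}.
\]

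An elementary sign check for integers $0 \leq p,j \leq n$ shows that the two factors have matching signs: when $j > p$ one has $n(j+1) - p(n+2) \geq 2(n-p) > 0$, and the symmetric estimate handles $j < p$, so the product is non-negative and vanishes precisely when $p = j$. This proves the divisorial identification $D_{I,j} \subset \bigcap_k \mathrm{Bs}(\Lambda_{b,k}) \iff \#(I) \neq j$. For the reverse inclusion it remains to show that no torus orbit $O_\tau$ all of whose rays satisfy $\#(I) = j$ is unstable, equivalently $F_\tau \cap \tilde P_b[n] \neq \emptyset$; this is precisely the compatibility of the row-wise maximizers encoded in the vertex structure of $\tilde P_b[n]$ from the proof of Proposition \ref{polytope of Znp}. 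The main technical obstacle is the central vertex optimization: disentangling the fractional entry of $w_i$ from the fixed row sums requires a case split on whether $\#(I) < i$ or $\#(I) \geq i$, and the final sign verification must be carried out with attention to the admissible integer range $0 \leq p,j \leq n$.
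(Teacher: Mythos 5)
Your proof is correct and takes essentially the same route as the paper: both reduce the statement to minimizing $\langle v_{I,j},\cdot\rangle$ over the polytope $P_b[n]$ using the vertex description of Lemma \ref{vertex of P_b[n]}, and your factored expression $(j-p)\cdot\frac{n(j+1)-p(n+2)}{2(n+1)}$ is identical to the paper's $\frac{1}{2(n+1)}(j-\#(I))\bigl((j-\#(I))n+n-2\,\#(I)\bigr)$. The only differences are cosmetic refinements --- you decouple the optimization over all tuples $(s_1,\dots,s_n)$ rather than restricting to the diagonal vertices $m_s$, and you explicitly flag the reverse inclusion for deeper torus orbits, which the paper subsumes under ``it is sufficient to prove.''
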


\begin{proof}
It is sufficient to prove
\[
\langle v_{I,j},m \rangle - d_{I,j} \geqslant 0
\]
for every rational point $m\in P_b[n]$ and the equality is attained if and only if
$j=\#(I)$. It is equivalent to say that the same condition holds for every
vertex $m\in P_b[n]$. By Lemma \ref{vertex of P_b[n]}, a vertex of $P_b[n]$
is of the form
\[
m_s =
\left(
\begin{array}{c}
  su \\ \hdashline[2pt/2pt]
  0 \\
  \frac{n}{n+1} \\
  \frac{3n}{n+1} \\
  \vdots \\
  \frac{\frac 12 n^2(n+1)}{n+1}
\end{array}
\right)
\]
for $u=\!\!\mbox{\phantom{$\big($}}^t\!\left(-\frac{n^2+n-1}{n+1},
-\frac{n^2-3}{n+1},
\dots,
-\frac{n+3}{n+1},
-\frac{1}{n+1},
\right)$ and $s\in \mathfrak S_n$, we have
\[
\langle v_{I,j}, m_s\rangle - d_{I,j}= \sum _{i\in I} (su)_i + \frac{j(j+1)}2 \frac{n}{n+1} - \#(I)\cdot (n-j).
\]
Its minimum is given by
\begin{multline*}
\sum _{i=1}^{\#(I)} -\frac{(n^2+n-1)-(i-1)(n+2)}{n+1} + \frac{j(j+1)}2 \frac{n}{n+1} - \#(I)\cdot (n-j)\\
= \frac 1{2(n+1)} (j-\#(I))((j-\#(I))n+n-2\cdot \#(I)).
\end{multline*}
It is elementary exercise to show that this amount is always non-negative and equals to zero
if and only if $j=\#(I)$.
\end{proof}

Let $\mathbf t = (t_1,\dots, t_{n+1})\in \mathbb A^{n+1}$ and $X[n]_{\mathbf t}$ the
fiber of the expanded degeneration of $X=\mathbb A^2\to \mathbb A^1,\; (x,y)\mapsto t=xy$.
We recall the description
of the fiber $X[n]_{\mathbf t}$ (see \cite{GHH}, Proposition 1.11 for detail).
Of course, if all the $t_i$'s are non-zero, then the fiber is
just $\mathbb C^*$.
The case $\mathbf t=\mathbf 0 = (0,\dots, 0)$ is the `most degenerate' case;
$X[n]_{\mathbf 0}$ consists of $(n+1)$
curves $\Delta ^0,\dots, \Delta ^{n+1}$ that form a straight tree.
\begin{center}
\scalebox{.8}{%WinTpicVersion4.32a
{\unitlength 0.1in%
\begin{picture}(52.0000,14.0000)(2.0000,-18.0000)%
% ELLIPSE 2 0 3 0 Black White  
% 4 1000 1800 1700 1400 1700 1800 300 1800
% 
\special{pn 8}%
\special{ar 1000 1800 700 400 3.1415927 6.2831853}%
% ELLIPSE 2 0 3 0 Black White  
% 4 1000 1800 1700 1400 1700 1800 300 1800
% 
\special{pn 8}%
\special{ar 1000 1800 700 400 3.1415927 6.2831853}%
% ELLIPSE 2 0 3 0 Black White  
% 4 2200 1800 2900 1400 2900 1800 1500 1800
% 
\special{pn 8}%
\special{ar 2200 1800 700 400 3.1415927 6.2831853}%
% ELLIPSE 2 0 3 0 Black White  
% 4 3400 1800 4100 1400 4100 1800 3900 1200
% 
\special{pn 8}%
\special{ar 3400 1800 700 400 5.1568082 6.2831853}%
% ELLIPSE 2 0 3 0 Black White  
% 4 4600 1800 5300 1400 5300 1800 3900 1800
% 
\special{pn 8}%
\special{ar 4600 1800 700 400 3.1415927 6.2831853}%
% ELLIPSE 2 0 3 0 Black White  
% 4 3300 1800 4100 1400 2900 1400 2500 1800
% 
\special{pn 8}%
\special{ar 3300 1800 800 400 3.1415927 4.2487414}%
% LINE 1 2 3 0 Black White  
% 2 3200 1400 3400 1400
% 
\special{pn 13}%
\special{pa 3200 1400}%
\special{pa 3400 1400}%
\special{dt 0.045}%
% LINE 2 0 3 0 Black White  
% 2 600 1800 200 400
% 
\special{pn 8}%
\special{pa 600 1800}%
\special{pa 200 400}%
\special{fp}%
% LINE 2 0 3 0 Black White  
% 2 5000 1800 5400 400
% 
\special{pn 8}%
\special{pa 5000 1800}%
\special{pa 5400 400}%
\special{fp}%
% STR 2 0 3 0 Black White  
% 4 400 500 400 600 1 0 0 0
% $\Delta ^0$
\put(4.0000,-6.0000){\makebox(0,0)[lt]{$\Delta ^0$}}%
% STR 2 0 3 0 Black White  
% 4 1000 1170 1000 1270 5 0 0 0
% $\Delta ^1$
\put(10.0000,-12.7000){\makebox(0,0){$\Delta ^1$}}%
% STR 2 0 3 0 Black White  
% 4 2200 1170 2200 1270 5 0 0 0
% $\Delta ^2$
\put(22.0000,-12.7000){\makebox(0,0){$\Delta ^2$}}%
% STR 2 0 3 0 Black White  
% 4 4600 1170 4600 1270 5 0 0 0
% $\Delta ^n$
\put(46.0000,-12.7000){\makebox(0,0){$\Delta ^n$}}%
% STR 2 0 3 0 Black White  
% 4 5200 500 5200 600 4 0 0 0
% $\Delta ^{n+1}$
\put(52.0000,-6.0000){\makebox(0,0)[rt]{$\Delta ^{n+1}$}}%
\end{picture}}%
}
\end{center}
Here, $\Delta ^0$ and $\Delta ^{n+1}$ are $\mathbb A^1$ and
all the other $\Delta ^i$'s are $\mathbb P^1$. The intersection
$\Delta ^{i-1}\cap \Delta ^i$ is defined by $t_i=0$.
The intermediate case is a partial smoothing of the degeneration.
Let us define
\[
I_{\mathbf t} = \{i\; |\; t_i = 0\}.
\]
If $I_{\mathbf t}=\{i_1<i_2<\dots <i_r\}$, the fiber $X_{\mathbf t}$ consists of
$\Delta ^0,\Delta ^{i_1},\dots, \Delta ^{i_r}$ and
is a result of smoothing along the coordinate $t_i$ for $i\notin I_{\mathbf t}$
\begin{center}
  \scalebox{.8}{%WinTpicVersion4.32a
{\unitlength 0.1in%
\begin{picture}(52.0000,14.0000)(2.0000,-18.0000)%
% ELLIPSE 2 2 3 0 Black White  
% 4 1000 1800 1700 1400 1700 1800 300 1800
% 
\special{pn 8}%
\special{pn 8}%
\special{pa 300 1800}%
\special{pa 300 1792}%
\special{fp}%
\special{pa 304 1755}%
\special{pa 306 1748}%
\special{fp}%
\special{pa 317 1713}%
\special{pa 320 1706}%
\special{fp}%
\special{pa 336 1672}%
\special{pa 341 1666}%
\special{fp}%
\special{pa 362 1635}%
\special{pa 367 1629}%
\special{fp}%
\special{pa 391 1602}%
\special{pa 397 1597}%
\special{fp}%
\special{pa 425 1572}%
\special{pa 431 1567}%
\special{fp}%
\special{pa 461 1545}%
\special{pa 467 1541}%
\special{fp}%
\special{pa 499 1521}%
\special{pa 505 1517}%
\special{fp}%
\special{pa 537 1499}%
\special{pa 545 1496}%
\special{fp}%
\special{pa 578 1481}%
\special{pa 586 1478}%
\special{fp}%
\special{pa 620 1464}%
\special{pa 628 1461}%
\special{fp}%
\special{pa 663 1450}%
\special{pa 671 1447}%
\special{fp}%
\special{pa 707 1437}%
\special{pa 714 1435}%
\special{fp}%
\special{pa 750 1426}%
\special{pa 758 1425}%
\special{fp}%
\special{pa 795 1418}%
\special{pa 802 1416}%
\special{fp}%
\special{pa 839 1411}%
\special{pa 847 1410}%
\special{fp}%
\special{pa 884 1405}%
\special{pa 892 1405}%
\special{fp}%
\special{pa 928 1402}%
\special{pa 936 1402}%
\special{fp}%
\special{pa 973 1400}%
\special{pa 981 1400}%
\special{fp}%
\special{pa 1019 1400}%
\special{pa 1027 1400}%
\special{fp}%
\special{pa 1064 1402}%
\special{pa 1072 1402}%
\special{fp}%
\special{pa 1109 1405}%
\special{pa 1117 1406}%
\special{fp}%
\special{pa 1154 1410}%
\special{pa 1161 1411}%
\special{fp}%
\special{pa 1198 1416}%
\special{pa 1206 1417}%
\special{fp}%
\special{pa 1242 1424}%
\special{pa 1250 1427}%
\special{fp}%
\special{pa 1286 1435}%
\special{pa 1294 1437}%
\special{fp}%
\special{pa 1330 1447}%
\special{pa 1337 1449}%
\special{fp}%
\special{pa 1372 1462}%
\special{pa 1380 1464}%
\special{fp}%
\special{pa 1414 1478}%
\special{pa 1422 1481}%
\special{fp}%
\special{pa 1455 1496}%
\special{pa 1463 1500}%
\special{fp}%
\special{pa 1495 1517}%
\special{pa 1502 1521}%
\special{fp}%
\special{pa 1533 1541}%
\special{pa 1539 1545}%
\special{fp}%
\special{pa 1569 1567}%
\special{pa 1575 1572}%
\special{fp}%
\special{pa 1603 1597}%
\special{pa 1609 1602}%
\special{fp}%
\special{pa 1634 1630}%
\special{pa 1638 1636}%
\special{fp}%
\special{pa 1660 1666}%
\special{pa 1664 1673}%
\special{fp}%
\special{pa 1681 1706}%
\special{pa 1683 1713}%
\special{fp}%
\special{pa 1694 1748}%
\special{pa 1695 1756}%
\special{fp}%
\special{pa 1700 1792}%
\special{pa 1700 1800}%
\special{fp}%
% ELLIPSE 2 2 3 0 Black White  
% 4 1000 1800 1700 1400 1700 1800 300 1800
% 
\special{pn 8}%
\special{pn 8}%
\special{pa 300 1800}%
\special{pa 300 1792}%
\special{fp}%
\special{pa 304 1755}%
\special{pa 306 1748}%
\special{fp}%
\special{pa 317 1713}%
\special{pa 320 1706}%
\special{fp}%
\special{pa 336 1672}%
\special{pa 341 1666}%
\special{fp}%
\special{pa 362 1635}%
\special{pa 367 1629}%
\special{fp}%
\special{pa 391 1602}%
\special{pa 397 1597}%
\special{fp}%
\special{pa 425 1572}%
\special{pa 431 1567}%
\special{fp}%
\special{pa 461 1545}%
\special{pa 467 1541}%
\special{fp}%
\special{pa 499 1521}%
\special{pa 505 1517}%
\special{fp}%
\special{pa 537 1499}%
\special{pa 545 1496}%
\special{fp}%
\special{pa 578 1481}%
\special{pa 586 1478}%
\special{fp}%
\special{pa 620 1464}%
\special{pa 628 1461}%
\special{fp}%
\special{pa 663 1450}%
\special{pa 671 1447}%
\special{fp}%
\special{pa 707 1437}%
\special{pa 714 1435}%
\special{fp}%
\special{pa 750 1426}%
\special{pa 758 1425}%
\special{fp}%
\special{pa 795 1418}%
\special{pa 802 1416}%
\special{fp}%
\special{pa 839 1411}%
\special{pa 847 1410}%
\special{fp}%
\special{pa 884 1405}%
\special{pa 892 1405}%
\special{fp}%
\special{pa 928 1402}%
\special{pa 936 1402}%
\special{fp}%
\special{pa 973 1400}%
\special{pa 981 1400}%
\special{fp}%
\special{pa 1019 1400}%
\special{pa 1027 1400}%
\special{fp}%
\special{pa 1064 1402}%
\special{pa 1072 1402}%
\special{fp}%
\special{pa 1109 1405}%
\special{pa 1117 1406}%
\special{fp}%
\special{pa 1154 1410}%
\special{pa 1161 1411}%
\special{fp}%
\special{pa 1198 1416}%
\special{pa 1206 1417}%
\special{fp}%
\special{pa 1242 1424}%
\special{pa 1250 1427}%
\special{fp}%
\special{pa 1286 1435}%
\special{pa 1294 1437}%
\special{fp}%
\special{pa 1330 1447}%
\special{pa 1337 1449}%
\special{fp}%
\special{pa 1372 1462}%
\special{pa 1380 1464}%
\special{fp}%
\special{pa 1414 1478}%
\special{pa 1422 1481}%
\special{fp}%
\special{pa 1455 1496}%
\special{pa 1463 1500}%
\special{fp}%
\special{pa 1495 1517}%
\special{pa 1502 1521}%
\special{fp}%
\special{pa 1533 1541}%
\special{pa 1539 1545}%
\special{fp}%
\special{pa 1569 1567}%
\special{pa 1575 1572}%
\special{fp}%
\special{pa 1603 1597}%
\special{pa 1609 1602}%
\special{fp}%
\special{pa 1634 1630}%
\special{pa 1638 1636}%
\special{fp}%
\special{pa 1660 1666}%
\special{pa 1664 1673}%
\special{fp}%
\special{pa 1681 1706}%
\special{pa 1683 1713}%
\special{fp}%
\special{pa 1694 1748}%
\special{pa 1695 1756}%
\special{fp}%
\special{pa 1700 1792}%
\special{pa 1700 1800}%
\special{fp}%
% ELLIPSE 2 2 3 0 Black White  
% 4 2200 1800 2900 1400 2900 1800 1500 1800
% 
\special{pn 8}%
\special{pn 8}%
\special{pa 1500 1800}%
\special{pa 1500 1792}%
\special{fp}%
\special{pa 1504 1756}%
\special{pa 1506 1748}%
\special{fp}%
\special{pa 1517 1713}%
\special{pa 1519 1706}%
\special{fp}%
\special{pa 1536 1673}%
\special{pa 1540 1666}%
\special{fp}%
\special{pa 1562 1636}%
\special{pa 1566 1630}%
\special{fp}%
\special{pa 1591 1603}%
\special{pa 1596 1598}%
\special{fp}%
\special{pa 1624 1573}%
\special{pa 1630 1568}%
\special{fp}%
\special{pa 1660 1546}%
\special{pa 1666 1541}%
\special{fp}%
\special{pa 1698 1522}%
\special{pa 1704 1518}%
\special{fp}%
\special{pa 1737 1500}%
\special{pa 1744 1496}%
\special{fp}%
\special{pa 1778 1481}%
\special{pa 1785 1478}%
\special{fp}%
\special{pa 1820 1464}%
\special{pa 1827 1461}%
\special{fp}%
\special{pa 1863 1449}%
\special{pa 1870 1447}%
\special{fp}%
\special{pa 1906 1437}%
\special{pa 1914 1435}%
\special{fp}%
\special{pa 1950 1427}%
\special{pa 1958 1425}%
\special{fp}%
\special{pa 1994 1418}%
\special{pa 2002 1417}%
\special{fp}%
\special{pa 2038 1411}%
\special{pa 2046 1410}%
\special{fp}%
\special{pa 2083 1406}%
\special{pa 2091 1405}%
\special{fp}%
\special{pa 2128 1402}%
\special{pa 2136 1402}%
\special{fp}%
\special{pa 2173 1400}%
\special{pa 2181 1400}%
\special{fp}%
\special{pa 2219 1400}%
\special{pa 2227 1400}%
\special{fp}%
\special{pa 2264 1402}%
\special{pa 2272 1402}%
\special{fp}%
\special{pa 2309 1405}%
\special{pa 2317 1405}%
\special{fp}%
\special{pa 2354 1410}%
\special{pa 2362 1411}%
\special{fp}%
\special{pa 2398 1417}%
\special{pa 2406 1418}%
\special{fp}%
\special{pa 2442 1425}%
\special{pa 2450 1426}%
\special{fp}%
\special{pa 2486 1435}%
\special{pa 2494 1437}%
\special{fp}%
\special{pa 2529 1447}%
\special{pa 2537 1449}%
\special{fp}%
\special{pa 2573 1461}%
\special{pa 2580 1464}%
\special{fp}%
\special{pa 2615 1478}%
\special{pa 2622 1481}%
\special{fp}%
\special{pa 2656 1497}%
\special{pa 2663 1500}%
\special{fp}%
\special{pa 2695 1518}%
\special{pa 2702 1522}%
\special{fp}%
\special{pa 2733 1541}%
\special{pa 2740 1546}%
\special{fp}%
\special{pa 2770 1568}%
\special{pa 2776 1572}%
\special{fp}%
\special{pa 2803 1597}%
\special{pa 2809 1603}%
\special{fp}%
\special{pa 2834 1630}%
\special{pa 2838 1636}%
\special{fp}%
\special{pa 2860 1666}%
\special{pa 2864 1673}%
\special{fp}%
\special{pa 2880 1706}%
\special{pa 2883 1713}%
\special{fp}%
\special{pa 2894 1748}%
\special{pa 2896 1755}%
\special{fp}%
\special{pa 2900 1792}%
\special{pa 2900 1800}%
\special{fp}%
% ELLIPSE 2 2 3 0 Black White  
% 4 3400 1800 4100 1400 4100 1800 3900 1200
% 
\special{pn 8}%
\special{pn 8}%
\special{pa 3701 1439}%
\special{pa 3709 1441}%
\special{fp}%
\special{pa 3743 1451}%
\special{pa 3751 1454}%
\special{fp}%
\special{pa 3785 1466}%
\special{pa 3792 1469}%
\special{fp}%
\special{pa 3825 1482}%
\special{pa 3832 1485}%
\special{fp}%
\special{pa 3864 1500}%
\special{pa 3871 1504}%
\special{fp}%
\special{pa 3902 1521}%
\special{pa 3909 1526}%
\special{fp}%
\special{pa 3939 1545}%
\special{pa 3946 1550}%
\special{fp}%
\special{pa 3975 1572}%
\special{pa 3981 1577}%
\special{fp}%
\special{pa 4008 1602}%
\special{pa 4013 1607}%
\special{fp}%
\special{pa 4037 1635}%
\special{pa 4042 1641}%
\special{fp}%
\special{pa 4062 1670}%
\special{pa 4066 1677}%
\special{fp}%
\special{pa 4081 1708}%
\special{pa 4084 1715}%
\special{fp}%
\special{pa 4094 1749}%
\special{pa 4096 1756}%
\special{fp}%
\special{pa 4100 1792}%
\special{pa 4100 1800}%
\special{fp}%
% ELLIPSE 2 2 3 0 Black White  
% 4 4600 1800 5300 1400 5300 1800 3900 1800
% 
\special{pn 8}%
\special{pn 8}%
\special{pa 3900 1800}%
\special{pa 3900 1792}%
\special{fp}%
\special{pa 3904 1756}%
\special{pa 3906 1749}%
\special{fp}%
\special{pa 3916 1716}%
\special{pa 3918 1709}%
\special{fp}%
\special{pa 3933 1679}%
\special{pa 3936 1674}%
\special{fp}%
\special{pa 3955 1645}%
\special{pa 3959 1639}%
\special{fp}%
\special{pa 3981 1614}%
\special{pa 3986 1608}%
\special{fp}%
\special{pa 4010 1585}%
\special{pa 4015 1580}%
\special{fp}%
\special{pa 4042 1559}%
\special{pa 4047 1554}%
\special{fp}%
\special{pa 4076 1535}%
\special{pa 4083 1531}%
\special{fp}%
\special{pa 4113 1512}%
\special{pa 4120 1509}%
\special{fp}%
\special{pa 4151 1493}%
\special{pa 4158 1490}%
\special{fp}%
\special{pa 4191 1476}%
\special{pa 4198 1473}%
\special{fp}%
\special{pa 4232 1460}%
\special{pa 4239 1457}%
\special{fp}%
\special{pa 4272 1447}%
\special{pa 4279 1444}%
\special{fp}%
\special{pa 4314 1435}%
\special{pa 4322 1433}%
\special{fp}%
\special{pa 4357 1425}%
\special{pa 4364 1423}%
\special{fp}%
\special{pa 4399 1417}%
\special{pa 4406 1416}%
\special{fp}%
\special{pa 4441 1410}%
\special{pa 4449 1409}%
\special{fp}%
\special{pa 4485 1405}%
\special{pa 4493 1405}%
\special{fp}%
\special{pa 4529 1402}%
\special{pa 4537 1402}%
\special{fp}%
\special{pa 4573 1400}%
\special{pa 4581 1400}%
\special{fp}%
\special{pa 4618 1400}%
\special{pa 4626 1400}%
\special{fp}%
\special{pa 4662 1402}%
\special{pa 4670 1402}%
\special{fp}%
\special{pa 4706 1405}%
\special{pa 4714 1405}%
\special{fp}%
\special{pa 4750 1409}%
\special{pa 4758 1410}%
\special{fp}%
\special{pa 4793 1415}%
\special{pa 4801 1417}%
\special{fp}%
\special{pa 4836 1423}%
\special{pa 4843 1425}%
\special{fp}%
\special{pa 4878 1433}%
\special{pa 4886 1435}%
\special{fp}%
\special{pa 4920 1444}%
\special{pa 4927 1446}%
\special{fp}%
\special{pa 4961 1457}%
\special{pa 4968 1460}%
\special{fp}%
\special{pa 5001 1472}%
\special{pa 5008 1475}%
\special{fp}%
\special{pa 5041 1489}%
\special{pa 5048 1493}%
\special{fp}%
\special{pa 5079 1508}%
\special{pa 5086 1512}%
\special{fp}%
\special{pa 5117 1530}%
\special{pa 5123 1534}%
\special{fp}%
\special{pa 5152 1554}%
\special{pa 5158 1559}%
\special{fp}%
\special{pa 5184 1580}%
\special{pa 5190 1585}%
\special{fp}%
\special{pa 5215 1609}%
\special{pa 5220 1614}%
\special{fp}%
\special{pa 5241 1639}%
\special{pa 5245 1645}%
\special{fp}%
\special{pa 5264 1673}%
\special{pa 5268 1680}%
\special{fp}%
\special{pa 5282 1710}%
\special{pa 5285 1717}%
\special{fp}%
\special{pa 5294 1750}%
\special{pa 5296 1756}%
\special{fp}%
\special{pa 5300 1792}%
\special{pa 5300 1800}%
\special{fp}%
% ELLIPSE 2 2 3 0 Black White  
% 4 3300 1800 4100 1400 2900 1400 2500 1800
% 
\special{pn 8}%
\special{pn 8}%
\special{pa 2500 1800}%
\special{pa 2500 1792}%
\special{fp}%
\special{pa 2505 1754}%
\special{pa 2507 1746}%
\special{fp}%
\special{pa 2520 1711}%
\special{pa 2523 1704}%
\special{fp}%
\special{pa 2542 1673}%
\special{pa 2546 1666}%
\special{fp}%
\special{pa 2570 1636}%
\special{pa 2576 1630}%
\special{fp}%
\special{pa 2603 1604}%
\special{pa 2610 1598}%
\special{fp}%
\special{pa 2639 1574}%
\special{pa 2646 1570}%
\special{fp}%
\special{pa 2678 1548}%
\special{pa 2685 1544}%
\special{fp}%
\special{pa 2718 1525}%
\special{pa 2726 1522}%
\special{fp}%
\special{pa 2760 1505}%
\special{pa 2767 1501}%
\special{fp}%
\special{pa 2803 1487}%
\special{pa 2810 1484}%
\special{fp}%
\special{pa 2846 1471}%
\special{pa 2854 1468}%
\special{fp}%
\special{pa 2890 1456}%
\special{pa 2898 1455}%
\special{fp}%
\special{pa 2934 1444}%
\special{pa 2942 1442}%
\special{fp}%
% LINE 1 2 3 0 Black White  
% 2 3200 1400 3400 1400
% 
\special{pn 13}%
\special{pa 3200 1400}%
\special{pa 3400 1400}%
\special{dt 0.045}%
% LINE 2 2 3 0 Black White  
% 2 600 1800 200 400
% 
\special{pn 8}%
\special{pa 600 1800}%
\special{pa 200 400}%
\special{dt 0.045}%
% LINE 2 2 3 0 Black White  
% 2 5000 1800 5400 400
% 
\special{pn 8}%
\special{pa 5000 1800}%
\special{pa 5400 400}%
\special{dt 0.045}%
% ELLIPSE 2 0 3 0 Black White  
% 4 800 400 400 1000 400 400 800 1000
% 
\special{pn 8}%
\special{ar 800 400 400 600 1.5707963 3.1415927}%
% ELLIPSE 2 0 3 0 Black White  
% 4 2200 1410 2900 1010 2900 1410 2200 1010
% 
\special{pn 8}%
\special{ar 2200 1410 700 400 4.7123890 6.2831853}%
% ELLIPSE 2 0 3 0 Black White  
% 4 3300 1400 4100 1000 2900 1000 2500 1400
% 
\special{pn 8}%
\special{ar 3300 1400 800 400 3.1415927 4.2487414}%
% LINE 2 0 3 0 Black White  
% 2 2220 1010 800 1000
% 
\special{pn 8}%
\special{pa 2220 1010}%
\special{pa 800 1000}%
\special{fp}%
% LINE 1 2 3 0 Black White  
% 2 3200 1000 3400 1000
% 
\special{pn 13}%
\special{pa 3200 1000}%
\special{pa 3400 1000}%
\special{dt 0.045}%
% ELLIPSE 2 0 3 0 Black White  
% 4 4600 1210 5100 1010 5100 1210 4600 810
% 
\special{pn 8}%
\special{ar 4600 1210 500 200 4.7123890 6.2831853}%
% LINE 2 0 3 0 Black White  
% 2 4610 1010 3710 1000
% 
\special{pn 8}%
\special{pa 4610 1010}%
\special{pa 3710 1000}%
\special{fp}%
% LINE 2 0 3 0 Black White  
% 2 4910 1200 5150 400
% 
\special{pn 8}%
\special{pa 4910 1200}%
\special{pa 5150 400}%
\special{fp}%
% STR 2 0 3 0 Black White  
% 4 1390 710 1390 810 5 0 0 0
% $\Delta ^0$
\put(13.9000,-8.1000){\makebox(0,0){$\Delta ^0$}}%
% STR 2 0 3 0 Black White  
% 4 2754 846 2795 937 5 0 0 0
% $\Delta ^3$
\put(27.9500,-9.3700){\rotatebox{24.2539}{\makebox(0,0){$\Delta ^3$}}}%
% STR 2 0 3 0 Black White  
% 4 4236 744 4236 844 5 0 0 0
% $\Delta ^{i_{r-1}}$
\put(42.3600,-8.4400){\makebox(0,0){$\Delta ^{i_{r-1}}$}}%
% STR 2 0 3 0 Black White  
% 4 4950 400 4950 500 4 0 0 0
% $\Delta ^{n+1}$
\put(49.5000,-5.0000){\makebox(0,0)[rt]{$\Delta ^{n+1}$}}%
% VECTOR 2 0 3 0 Black White  
% 16 5170 900 5070 840 4610 1310 4610 1110 4210 1360 4070 1130 2110 1330 2110 1130 1810 1370 1680 1140 1400 1360 1500 1150 1010 1330 1010 1130 470 1060 550 960
% 
\special{pn 8}%
\special{pa 5170 900}%
\special{pa 5070 840}%
\special{fp}%
\special{sh 1}%
\special{pa 5070 840}%
\special{pa 5117 891}%
\special{pa 5116 867}%
\special{pa 5137 857}%
\special{pa 5070 840}%
\special{fp}%
\special{pa 4610 1310}%
\special{pa 4610 1110}%
\special{fp}%
\special{sh 1}%
\special{pa 4610 1110}%
\special{pa 4590 1177}%
\special{pa 4610 1163}%
\special{pa 4630 1177}%
\special{pa 4610 1110}%
\special{fp}%
\special{pa 4210 1360}%
\special{pa 4070 1130}%
\special{fp}%
\special{sh 1}%
\special{pa 4070 1130}%
\special{pa 4088 1197}%
\special{pa 4098 1176}%
\special{pa 4122 1177}%
\special{pa 4070 1130}%
\special{fp}%
\special{pa 2110 1330}%
\special{pa 2110 1130}%
\special{fp}%
\special{sh 1}%
\special{pa 2110 1130}%
\special{pa 2090 1197}%
\special{pa 2110 1183}%
\special{pa 2130 1197}%
\special{pa 2110 1130}%
\special{fp}%
\special{pa 1810 1370}%
\special{pa 1680 1140}%
\special{fp}%
\special{sh 1}%
\special{pa 1680 1140}%
\special{pa 1695 1208}%
\special{pa 1706 1186}%
\special{pa 1730 1188}%
\special{pa 1680 1140}%
\special{fp}%
\special{pa 1400 1360}%
\special{pa 1500 1150}%
\special{fp}%
\special{sh 1}%
\special{pa 1500 1150}%
\special{pa 1453 1202}%
\special{pa 1477 1198}%
\special{pa 1489 1219}%
\special{pa 1500 1150}%
\special{fp}%
\special{pa 1010 1330}%
\special{pa 1010 1130}%
\special{fp}%
\special{sh 1}%
\special{pa 1010 1130}%
\special{pa 990 1197}%
\special{pa 1010 1183}%
\special{pa 1030 1197}%
\special{pa 1010 1130}%
\special{fp}%
\special{pa 470 1060}%
\special{pa 550 960}%
\special{fp}%
\special{sh 1}%
\special{pa 550 960}%
\special{pa 493 1000}%
\special{pa 517 1002}%
\special{pa 524 1025}%
\special{pa 550 960}%
\special{fp}%
\end{picture}}%
}
\end{center}
Again $\Delta ^0$ and $\Delta ^{i_r}$ are $\mathbb A^1$ and all the other components
are $\mathbb P^1$. Let $\Delta ^{i_l, \circ}= \Delta ^{i_l}\backslash \Sing (X_{\mathbf t})$.
We note that $\Delta ^{i_l, \circ}\cong \mathbb C^*$.

Let us take a cycle $\gamma = \sum m_i (p_i, p_i') \in \Sym ^n(S[n]/\mathbb A^{n+1})^{ss}$,
where $(p_i,p_i')$ is a point of $S[n]=X[n]\times \mathbb A^1$. We denote
the image of $\gamma$ under the projection by
\[
\mathbf t(\gamma) = (t_1(\gamma),\dots, t_{n+1}(\gamma)) \in \mathbb A^{n+1}.
\]
Then, $\gamma$ is supported on the fiber $X[n]_{\mathbf t(\gamma)}\times \mathbb A^1$.
The numerial criterion of stability (\emph{op. cit.}, Theorem 2.9, see also
\S4, (19)) imposes strong constraints on the
distribution of points; all $p_i$'s are in the smooth locus of $X[n]_{\mathbf t(\gamma)}$
and the degree of $\gamma _{|\Delta ^{i_l}\times \mathbb A^1}$ is $i_{l+1}-i_{l}$ (here we set
$i_0=1$ and $i_{r+1}=n+1$).

Let us take a point
\[
\tilde \gamma_1 = (p_1,\dots ,p_n)\in W[n]
= X[n]\times _{\mathbb A^{n+1}} \dots \times _{\mathbb A^{n+1}} X[n],
\]
namely $n$-tuple of points $p_j\in X[n]$ such that
\[
\mathbf t(p_1)=\dots =\mathbf t(p_n)\in \mathbb A^{n+1},
\]
where $\mathbf t(p_j)=(t_1(p_j),\dots, t_{n+1}(p_j))$ stands for
the image of $p_j$ in $\mathbb A^{n+1}$, as before.
Combining with a point $\tilde \gamma _2=(p_1',\dots ,p_n')\in \mathbb A^n$,
we specify a point
\[
\tilde \gamma =(\tilde \gamma _1,\tilde \gamma _2)\in W[n]\times \mathbb A^n.
\]
We recall that we have toric charts
$W_k\subset X[n]\; (k=1,\dots, n+1)$ defined by
\[
u_i\neq 0 \mbox{\; for $i<k$\; and } v_i\neq 0 \mbox{\; for $i\geqslant k$},
\]
(see \cite{GHH}, Remark 1.6). $W_k$ is isomorphic to $\mathbb A^{n+2}$ with toric coordinates
\[
\begin{aligned}
\big(& x,\frac{u_1}{v_1},t_2,\dots, t_{n+1}\big),  &(&k=1)\\
\big(& t_1,\dots, t_{k-1},\frac{v_{k-1}}{u_{k-1}},\frac{u_k}{v_k},
t_{k+1},\dots, t_{n+1}\big),\quad &(&1<k<n+1) \\
\big(& t_1,\dots, t_n,\frac{v_n}{u_n},y). &(&k=n+1)
\end{aligned}
\]
We also note that we have the relations
\begin{equation}\label{relations in W_is}
x\cdot \frac{u_1}{v_1}=t_1,\quad
\frac{v_{k-1}}{u_{k-1}}\frac{u_k}{v_k}=t_k,\quad \mbox{and}\quad
\frac{v_n}{u_n}\cdot y=t_{n+1}.
\end{equation}
If the image
\[
\gamma = \sum (p_i,p_i')\in \Sym ^n(S[n]/\mathbb A^{n+1})
\]
of $\tilde \gamma$ is semistable, by the stability criterion (\emph{op. cit.}, Theorem 2.9),
we may assume $p_i\in W_i$ after renumbering, and hence we may assume
\begin{equation}\label{renumbering}
\tilde \gamma _1=(p_1,p_2, \dots, p_n)\in W_1\times _{\mathbb A^{n+1}}
W_2\times _{\mathbb A^{n+1}}\cdots \times _{\mathbb A^{n+1}} W_n.
\end{equation}
We write the coordinate of $p_k$ as
\[
\begin{aligned}
p_1 &= \left(x_1,\frac{u_{11}}{v_{11}},t_2,\dots, t_{n+1}\right), \\
p_k &= \left(t_1,\dots, t_{k,k-1},\frac{v_{k,k-1}}{u_{k,k-1}},\frac{u_{k,k}}{v_{k,k}},
t_{k+1},\dots, t_{n+1}\right).\quad (1<k\leqslant n)
\end{aligned}
\]
Then, $W_1\times _{\mathbb A^{n+1}}\cdots \times _{\mathbb A^{n+1}}W_n$ is
an affine space with coordinate
\[
(w_1,\dots, w_n; w_{n+1},\dots, w_{2n}; w_{2n+1}) =
\left(\frac{u_{11}}{v_{11}},\dots ,\frac{u_{nn}}{v_{nn}};
x_1,\frac{v_{21}}{u_{21}},\dots, \frac{v_{n,n-1}}{u_{n,n-1}};
t_{n+1}
\right).
\]
As the right hand side of \eqref{coordinte of W[n]} equals to
\begin{multline*}
(\,(\, (x_1,y_1;\dots ;x_n,y_n),(t_1,\dots, t_{n+1})\, )\\
([u_{11}:v_{11}],\dots, [u_{1n}:v_{1n}]), \dots ,
([u_{n1}:v_{n1}],\dots, [u_{nn}:v_{nn}])\, ),
\end{multline*}
in our notation, where the image of $p_k$ under the map $X[n]\to X=\mathbb A^2$ is $(x_k,y_k)$,
we have
\begin{multline*}
(w_1,\dots, w_n; w_{n+1},\dots, w_{2n}; w_{2n+1})\\
=
\left(
\frac{s_1}{t_2\cdots t_{n+1}}, \frac{s_2}{t_3\cdots t_{n+1}}, \dots,
\frac{s_n}{t_{n+1}};
\frac{t_1\cdots t_{n+1}}{s_1},\frac{t_2\cdots t_{n+1}}{s_2},\dots ,
\frac{t_nt_{n+1}}{s_n};
t_{n+1}
\right).
\end{multline*}
The cooresponding cone of monomials $\sigma _{1\dots n}^{\vee}$
on $M_W[n]_{\mathbb R}$ is generated by
the column vectors of
\[
\sigma _{1\dots n}^{\vee} =
\left(
\begin{array}{cccc;{2pt/2pt}cccc;{2pt/2pt}c}
1 & 0 & \cdots & 0 & -1 & 0 & \cdots & 0 & 0 \\
0 & 1 & \cdots & 0 & 0 & -1 & \cdots & 0 & 0 \\
 & &\ddots & & & & \ddots & & \vdots  \\
0 & 0 & \cdots & 1 & 0 & 0 & \cdots & -1 & 0 \\ \hdashline[2pt/2pt]
0 & 0 & \cdots & 0 & 1 & 0 & \cdots & 0 & 0 \\
-1 & 0 & \cdots & 0 & 1 & 1 & \cdots & 0 & 0 \\
-1 & -1 & \cdots & 0 & & & \ddots & & \vdots  \\
& & \ddots && 1 & 1 &\cdots & 1 & 0 \\
-1 & -1 & \cdots & -1 & 1 & 1 & \cdots & 1 & 1
\end{array}
\right).
\]
One sees that its dual cone $\sigma _{1\dots n}$ is generated by the columns of
\[
\sigma _{1\cdots n}=
\left(
\begin{array}{ccccccccc}
0 & 1 & 1 & 1 & 1 & \cdots & 1 & 1 & 1 \\
0 & 0 & 0 & 1 & 1 & \cdots & 1 & 1 & 1 \\
&&&&& \ddots && \\
0 & 0 & 0 & 0 & 0 & \cdots & 0 & 1 & 1 \\ \hdashline[2pt/2pt]
1 & 1 & 0 & 0 & 0 & \cdots & 0 & 0 & 0 \\
0 & 0 & 1 & 1 & 0 & \cdots & 0 & 0 & 0 \\
&&&&& \ddots &&& \\
0 & 0 & 0 & 0 & 0 & \cdots & 1 & 1 & 0 \\
0 & 0 & 0 & 0 & 0 & \cdots & 0 & 0 & 1
\end{array}
\right),
\]
and
$X(\sigma _{1\dots n})= W_1\times _{\mathbb A^{n+1}}\cdots \times _{\mathbb A^{n+1}} W_n
\subset W[n]$. The affine quotient $X(\sigma _{1\dots n})\git G[n]$ is
an affine toric variety $X(\pi \sigma _{1\dots n})$ for $\pi$ defined in
\eqref{matrix pi}. A direct calculation immediately shows that
\[
\pi \sigma _{1\dots n} = \delta ^{(n)}
\]
and its dual cone is generated by the column vectors of
\begin{equation}\label{delta-n-dual2}
(\pi \sigma _{1\dots n})^{\vee} = \delta ^{(n)\,\vee}=
\begin{pmatrix}
1 & 0 & 0 & & & 0 & 0 \\
-1 & 1 & 0 & & & 0 & 0 \\
0 & -1 & 1 & & & 0 & 0 \\
0 & 0 & -1 & \ddots & & 0 & 0 \\
&&& \ddots & \ddots && \\
0 & 0 & 0 & & \ddots & 1 & 0 \\
0 & 0 & 0 & & & -1 & 0 \\
0 & 0 & 0 & & & 0 & 1
\end{pmatrix}.
\end{equation}
The columns corresponds to the invariant monomial functions $f_0,\dots, f_n$
that generates the coordinate ring of the quotient $X(\sigma _{1\dots n})\git G[n]$.
As we have
\[
{}^t \pi\, (\pi \sigma _{1\dots n})^{\vee}
=
\left(
\begin{array}{ccccc}
-1 & 1 & \cdots & 0 & 0 \\
0 & -1 & \cdots & 0 & 0 \\
\vdots & \vdots & & \vdots & \vdots \\
0 & 0 & \cdots & 1 & 0 \\
0 & 0 & \cdots & -1 & 0 \\
0 & 0 & \cdots & 0 & 1 \\ \hdashline[2pt/2pt]
1 & 0 & \cdots & 0 & 0 \\
\vdots & \vdots & & \vdots & \vdots \\
1 & 0 & \cdots & 0 & 0
\end{array}
\right),
\]
we get the relations
\begin{equation}\label{local quotient map}
\begin{aligned}
f_0 &= w_{n+1} = x_1,\\
f_k &= w_{k}w_{n+k+1} = \frac{u_{k,k}}{v_{k,k}}\frac{v_{k+1,k}}{u_{k+1,k}},
\quad (0<k<n)\\
f_n&=w_nw_{n+1} = \frac{u_{nn}}{v_{nn}}\cdot t_{n+1}.
\end{aligned}
\end{equation}
Among the generators of $\sigma _{1\cdots n}$, the one that corresponds to
an irreducible component of the locus of unstable points is of the form
\[
\left(
\begin{array}{c}
e_1+\dots +e_{j+1} \\ \hdashline[2pt/2pt]
e_j
\end{array}
\right)\quad (j=0,1,\dots, n-1)
\]
by Proposition \ref{unstable locus}, and the corresponding divisor $D_{\{1,\dots ,j+1\},j}$
is defined by $w_{j+1}=0$. Therefore, the locus of semistable points $X(\sigma _{1\dots n})^{ss}$
is given by $w_1\dots w_n \neq 0$. Summarizing everything up, we get the following

\begin{proposition}
Notation as above.
Let $\tilde \gamma =(\tilde \gamma_1,\tilde \gamma _2)\in X(\sigma _{1\dots n})^{ss}\times \mathbb A^n$,
and denote the value of the function $w_j$ at $\tilde \gamma _1$ by $w_j(\tilde \gamma _1)$.
Then the affine subspace of $X(\sigma _{1\dots n})\times \mathbb A^n$ defined by
\[
w_j=w_j(\tilde \gamma _1)\quad (j=1,\dots, n)
\]
gives a $\Stab _{\mathfrak S_n}(\tilde \gamma)$-invariant
slice $\tilde V_{\tilde \gamma}$ at $\tilde \gamma$ to the quotient map
\[
\xymatrix @C=20pt @R=2pt {
W[n]^{ss}\times \mathbb A^n \ar[rrr] & & & \tilde Z^{(n)} \\
\cup &&&\cup \\
X(\sigma _{1\dots n})^{ss}\times \mathbb A^n
\ar[rr]^{(f_0,\dots, f_{n};\; \id)\hspace{20pt}} & & (X(\sigma _{1\dots n})^{ss}\git G[n])\times \mathbb A^n
\ar@{=}[r]^{\hspace{28pt}\sim} &X(\delta ^{(n)})\times \mathbb A^n
}\lower42pt\hbox{,}
\]
namely, the quotient map restricted to $\tilde V_{\tilde \gamma}$ gives an isomorphism
$\tilde V_{\tilde \gamma}\overset{\sim}{\to} X(\delta ^{(n)})\times \mathbb A^n$.
\end{proposition}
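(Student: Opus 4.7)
The plan is to prove the two assertions separately: that the slice $\tilde V_{\tilde\gamma}$ maps isomorphically to $X(\delta^{(n)})\times\mathbb A^n$ under the quotient map, and that $\tilde V_{\tilde\gamma}$ is preserved by $\Stab_{\mathfrak S_n}(\tilde\gamma)$. For the isomorphism I would proceed by direct substitution in \eqref{local quotient map}: the slice has free parameters $(w_{n+1},\dots,w_{2n+1})$ together with the $\mathbb A^n$-factor, while $w_j=c_j:=w_j(\tilde\gamma_1)$ for $j=1,\dots,n$, and by Proposition \ref{unstable locus} semistability ensures each $c_j$ is non-zero. The formulas in \eqref{local quotient map} restrict to
\[
f_0=w_{n+1},\qquad f_k=c_k\,w_{n+k+1}\ (0<k<n),\qquad f_n=c_n\,w_{2n+1}.
\]
Since \eqref{delta-n-dual2} shows $\delta^{(n)\vee}$ to be a smooth simplicial cone of rank $n+1$, one has $X(\delta^{(n)})\cong\mathbb A^{n+1}$ with coordinates $f_0,\dots,f_n$, so the restriction is a diagonal linear isomorphism with non-zero scalings $(1,c_1,\dots,c_n)$; tensoring with $\id_{\mathbb A^n}$ yields the desired isomorphism.

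For the $\Stab$-invariance, my approach would be to descend to $\tilde Z^{(n)}$. Any $s\in\Stab_{\mathfrak S_n}(\tilde\gamma)$ satisfies $p_k=p_{s^{-1}(k)}$ for every $k$, so each $p_k$ lies in $W_k\cap W_{s^{-1}(k)}\subset X[n]$. The $\mathfrak S_n$-action on $W[n]\times\mathbb A^n$ commutes with $G[n]$, hence descends to an action on $\tilde Z^{(n)}$ whose toric form is recorded by \eqref{action on N}, and this descended action fixes the image $\bar\gamma$ of $\tilde\gamma$. Therefore $s$ preserves some Zariski neighborhood of $\bar\gamma$ in $\tilde Z^{(n)}$; using the isomorphism of the previous paragraph this neighborhood is identified with $\tilde V_{\tilde\gamma}$ itself, yielding the invariance. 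To match the induced action on $\tilde V_{\tilde\gamma}$ with the naive factor-permutation action coming from $W[n]\times\mathbb A^n$, one then checks that the two differ by a pointwise $G[n]$-correction compatible with the GHH linearization.

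The hard part will be this last compatibility. The naive $\mathfrak S_n$-action on $W[n]$ does not preserve the chart $X(\sigma_{1\dots n})$ or the locus $\tilde V_{\tilde\gamma}$ as a subset: a direct calculation for $n=2$, $s=(1\,2)$ yields $s^{*}w_1=1/w_{n+2}$, which is non-constant on the slice. The $\Stab$-invariance emerges only after the correct $G[n]$-correction, and making this explicit requires a careful analysis of the transition formulas \eqref{relations in W_is} between the $W_k$- and $W_{s^{-1}(k)}$-charts along the intersection locus $\bigcap_k (W_k\cap W_{s^{-1}(k)})$, together with the verification that the corrected action agrees with the descended action \eqref{action on N} on $\tilde Z^{(n)}$. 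Once that identification is in place, the invariance of $\tilde V_{\tilde\gamma}$ follows by pulling back through the isomorphism of the first paragraph.
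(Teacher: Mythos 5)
Your first paragraph is essentially the paper's own argument: the proposition is stated without a separate proof precisely because, once \eqref{local quotient map} and Proposition \ref{unstable locus} are available, the restriction of $(f_0,\dots,f_n)$ to $\{w_j=c_j,\ j=1,\dots,n\}$ is the invertible diagonal map you write down (and your reading $f_n=c_nw_{2n+1}$, correcting the misprint $w_nw_{n+1}$ in \eqref{local quotient map}, is the intended one, since $f_n=\frac{u_{nn}}{v_{nn}}\cdot t_{n+1}=w_nw_{2n+1}$). That half of the proposal is complete and correct.

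On the invariance clause, your $n=2$ computation is right and I can confirm it: for $s=(1\,2)$ one finds $s^*w_1=1/w_4$ and $s^*w_2=w_1w_2w_4$, so $s(\tilde V_{\tilde\gamma})\cap\tilde V_{\tilde\gamma}$ is the proper subvariety $\{w_4=1/c_1\}$, on which $s$ acts as the identity; the affine subspace is genuinely not preserved by the factor-permutation action, and no shrinking repairs this. So you have correctly detected that the statement cannot be read as set-theoretic invariance; the paper offers no argument for this clause, and what the sequel actually uses (forming $V_\gamma=\tilde V_{\tilde\gamma}/\Stab_{\mathfrak S_n}(\tilde\gamma)$ and identifying it with $\tilde U_{\tilde q}/\Stab_{\mathfrak S_n}^0(\tilde q)$) is only the action transported through the isomorphism $\tilde V_{\tilde\gamma}\cong\tilde U_{\tilde q}$. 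Where your proposal falls short is that it leaves exactly this point as an unverified ``hard part,'' and the route you sketch --- chart transitions plus a pointwise $G[n]$-correction --- is more delicate than necessary (the correcting group element one solves for, e.g.\ $\lambda_{3}=c_1t_2/c_2$ in the $n=2$ case, leaves $G[n]$ as $t_2\to0$, so the correction only makes sense on the open torus). The efficient completion is: the $\mathfrak S_n$- and $G[n]$-actions commute, so each $s\in\Stab_{\mathfrak S_n}(\tilde\gamma)$ descends to an automorphism $\bar s$ of $\tilde Z^{(n)}$ with $\pi\circ s=\bar s\circ\pi$; since $\bar s(\tilde q)=\pi(s\tilde\gamma)=\tilde q$, one shrinks $\tilde U_{\tilde q}$ to a $\Stab_{\mathfrak S_n}(\tilde\gamma)$-invariant neighborhood (necessary anyway, because $\bar s$ does not preserve the chart $X(\delta^{(n)})\times\mathbb A^n$), replaces $\tilde V_{\tilde\gamma}$ by its preimage, and defines the action of $s$ on $\tilde V_{\tilde\gamma}$ as $(\pi|_{\tilde V_{\tilde\gamma}})^{-1}\circ\bar s\circ\pi|_{\tilde V_{\tilde\gamma}}$. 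With the invariance clause restated in this induced-action form your counterexample is no longer an obstruction and the proof closes in two lines; as written, however, the proposal does not yet prove the second assertion.
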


\subsection{Comparison of stabilizer subgroups}
To prove Theorem \ref{comparison theorem}, we compare the Deligne-Mumford stacks
$\mathscr I^n_{S/C}$ and $\mathscr Y^{(n)}$.

\begin{lemma}\label{comparison of stab of trivial angle type}
 Let $\tilde \gamma =\left((p_1,p_1'),\dots ,(p_n,p_n')\right)
 \in X(\sigma _{1\dots n})^{ss}\times \mathbb A^n$ and
 $\tilde q\in X(\delta ^{(n)})\times \mathbb A^n\subset \tilde Z^{(n)}$ be its image.
 Recall that we defined the sequence
 $I_{\mathbf t(\tilde \gamma)}$ as
 \[
 I_{\mathbf t(\tilde \gamma)}=\{i\, |\, t_i(\tilde \gamma) = 0\}=\{i_1<\dots <i_r\}.
 \]
 and $i_0=1, i_{r+1}=n+1$ by convention.
 Then,
 \begin{enumerate}[\rm (1)]
    \item If $s\in \Stab _{\mathfrak S_n}(\tilde q)$ and $i_l\leqslant j<i_{l+1}$,
    we have $i_l\leqslant s(j)<i_{l+1}$.
    \item $\Stab _{\mathfrak S_n}(\tilde \gamma)=
    \Stab _{\mathfrak S_n}^0(\tilde q)$\, (see \eqref{stab of trivial angle type}).
 \end{enumerate}

\end{lemma}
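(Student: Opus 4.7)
The plan is to combine two ingredients: the $G[n]$-invariance of the component decomposition of each fibre of $W[n]\to\mathbb A^{n+1}$, which handles~(1), and the explicit slice description of the quotient map established just above, which handles~(2).

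For part~(1), I first observe that the $G[n]$-action on $W[n]$ preserves the component structure of each factor: $G[n]$ sits inside the base torus of $\mathbb A^{n+1}$ and acts by rescaling the $t_i$'s, so every locus $\{t_i=0\}$ is preserved and each $G[n]$-orbit on $X[n]$ stays inside a single irreducible component of the fibre containing it. Consequently, the \emph{component-assignment} function on $W[n]$ sending $(p_1,\dots,p_n)$ to the tuple of components to which each $p_j$ belongs is $G[n]$-invariant. Under the renumbering that places $\tilde\gamma_1$ in $W_1\times_{\mathbb A^{n+1}}\cdots\times_{\mathbb A^{n+1}}W_n$, the numerical criterion of stability (as extended by Remark~\ref{remark on numerical criterion}) forces $p_j\in\Delta^{i_l}$ precisely when $i_l\leqslant j<i_{l+1}$. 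Since $s\in\Stab_{\mathfrak S_n}(\tilde q)$ translates to $s\cdot\tilde\gamma=g\cdot\tilde\gamma$ for some $g\in G[n]$, applying the invariant component function forces $s^{-1}(j)$ to lie in the same interval as $j$; because $s$ is a bijection this is equivalent to the conclusion~(1).

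For part~(2), the inclusion $\Stab_{\mathfrak S_n}(\tilde\gamma)\subset\Stab_{\mathfrak S_n}^0(\tilde q)$ is straightforward: $\mathfrak S_n$-equivariance of the quotient map propagates a fix of $\tilde\gamma$ to a fix of $\tilde q$; moreover the equalities $p_i=p_{s(i)}$ force the projective-coordinate ratios $\xi_i/\xi_{s(i)}$ at $\tilde q$ to equal~$1$, and $\tilde q_2=\tilde\gamma_2$ is visibly $s$-fixed under the standard permutation action on the $\mathbb A^n$-factor. The reverse inclusion is the heart of the argument: given $s\in\Stab_{\mathfrak S_n}^0(\tilde q)$, one writes $s\cdot\tilde\gamma=g\cdot\tilde\gamma$ for some $g\in G[n]$ and must show that $g$ can be taken to be the identity. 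The local formulas $f_0=w_{n+1}$, $f_k=w_kw_{n+k+1}$ for $0<k<n$, and $f_n=w_nw_{2n+1}$ translate the trivial-angle conditions $\xi_i/\xi_{s(i)}=1$ into equalities among the $w$-coordinates of $\tilde\gamma$; combining these with the semistability condition $w_1,\dots,w_n\neq0$ and with part~(1) should pin down $p_{s(i)}=p_i$ factor-by-factor, whence $s\cdot\tilde\gamma=\tilde\gamma$.

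The hardest step is the reverse inclusion in~(2): one must bridge the "permutahedron" description of $\tilde Z^{(n)\prime}$ from Proposition~\ref{polytope of Znp}, in which the angle type is defined via the $\xi$-coordinates on the $X(A_{n-1})$-factor, with the slice description coming from the $G[n]$-quotient of $W[n]$. The two descriptions give isomorphic toric charts of $\tilde Z^{(n)\prime}$ but through different generators, and the $\mathfrak S_n$-action interacts non-trivially with the renumbering that places $p_j$ in $W_j$. Carefully tracking the cycle decomposition of $s$ through this chart change, and ruling out a genuinely non-trivial $g\in G[n]$ when the angle is trivial, is where the main bookkeeping concentrates.
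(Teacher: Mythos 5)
Your argument for part~(1) is a genuinely different route from the paper's, and it works. The paper computes $\xi_j/\xi_{s(j)}=f_jf_{j+1}\cdots f_{s(j)-1}$ explicitly and observes that if $j$ and $s(j)$ sat in different intervals the product would contain the vanishing factor $t_{i_{l+1}}=\frac{v_{i_{l+1},i_{l+1}-1}}{u_{i_{l+1},i_{l+1}-1}}\frac{u_{i_{l+1},i_{l+1}}}{v_{i_{l+1},i_{l+1}}}$ coming from \eqref{relations in W_is}, contradicting the fact that the ratio is a root of unity. You instead use that every semistable point of $W[n]\times\mathbb A^n$ is stable (Remark \ref{remark on numerical criterion}), so the fibre of the quotient map through $\tilde\gamma$ is a single $G[n]$-orbit, write $s\cdot\tilde\gamma=g\cdot\tilde\gamma$, and invoke $G[n]$-invariance of the component assignment. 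This is cleaner and coordinate-free, but it leans on one assertion you state without proof: that the renumbering $p_j\in W_j$ together with the degree distribution from the stability criterion forces $p_j\in\Delta^{i_l}$ exactly for $i_l\leqslant j<i_{l+1}$. Since $W_j\cap X[n]_{\mathbf t}$ meets \emph{two} components when $j\in I_{\mathbf t}$, this needs a short counting induction (the interior indices $i_l<j<i_{l+1}$ already contribute $i_{l+1}-i_l-1$ points to component $l$, so $p_{i_l}$ is forced onto component $l$ once component $l-1$ is full). Spell that out and part~(1) is complete.

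Part~(2) has a genuine gap: the reverse inclusion $\Stab_{\mathfrak S_n}^0(\tilde q)\subset\Stab_{\mathfrak S_n}(\tilde\gamma)$ is only announced, not proved. Everything reduces to the equivalence, for $i<j$ in the same interval, between $\xi_i/\xi_j=1$ and $p_i=p_j$, and your write-up says the local formulas \eqref{local quotient map} ``should pin down $p_{s(i)}=p_i$ factor-by-factor'' while your closing paragraph concedes that ``the main bookkeeping'' remains. That bookkeeping \emph{is} the proof. Concretely, one first reduces to transpositions $s=(i\ j)$ using that both $\Stab_{\mathfrak S_n}(\tilde\gamma)$ and $\Stab_{\mathfrak S_n}^0(\tilde q)$ are Young subgroups (a reduction absent from your sketch), and then establishes the telescoping identity
\[
f_if_{i+1}\cdots f_{j-1}
=\frac{u_{i,i}}{v_{i,i}}\,t_{i+1}\cdots t_{j-1}\,\frac{v_{j,j-1}}{u_{j,j-1}}
=\frac{u_{i,j-1}}{v_{i,j-1}}\,\frac{v_{j,j-1}}{u_{j,j-1}}
\]
by substituting $t_k=\frac{v_{i,k-1}}{u_{i,k-1}}\frac{u_{i,k}}{v_{i,k}}$ from \eqref{relations in W_is} evaluated at the single point $p_i$. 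This identifies $\xi_i/\xi_j$ with the ratio of the $(j-1)$-st $\mathbb P^1$-coordinates of $p_i$ and $p_j$, which equals $1$ precisely when $p_i=p_j$. Note that this computation is needed for your forward inclusion as well: $\xi_i/\xi_{s(i)}$ is a product of invariant functions each involving a \emph{pair of consecutive points}, so $p_i=p_{s(i)}$ does not make it equal to $1$ ``visibly''. Your reformulation via ``$s\cdot\tilde\gamma=g\cdot\tilde\gamma$ and show $g=\id$'' is a legitimate framing but adds nothing until this identity is in place.
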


\begin{proof}
(1) We may assume $j<s(j)$. $s\in \Stab _{\mathfrak S_n}(\tilde q)$ implies that
there exists a root of unity $\alpha$ such that
\[
\frac{\xi _j}{\xi _{s(j)}} = \alpha.
\]
As we have
\[
\left(\frac{\xi _1}{\xi _2},\frac{\xi _2}{\xi _3},\dots, \frac{\xi _{n-1}}{\xi _n}\right)
= (f_1,\dots, f_{n-1})
\]
in the coordinate ring of $X(\delta ^{(n)})$ by \eqref{delta-n-dual} and \eqref{delta-n-dual2}, we know
\begin{equation}\label{bridge product}
\begin{aligned}
  \alpha &=\frac{\xi _j}{\xi _{j+1}}\frac {\xi _{j+1}}{\xi _{j+2}}\dots \frac{\xi _{s(j)-1}}{\xi _{s(j)}}\\
  &=f_{j}f_{j+1}\dots f_{s(j)-1}\\
  &=\frac{u_{j,j}}{v_{j,j}}\frac{v_{j+1,j}}{u_{j+1,j}}\cdot
  \frac{u_{j+1,j+1}}{v_{j+1,j+1}}\frac{v_{j+2,j+1}}{u_{j+2,j+1}}\cdots
  \frac{u_{s(j)-1,s(j)-1}}{v_{s(j)-1,s(j)-1}}\frac{v_{s(j),s(j)-1}}{u_{s(j),s(j)-1}}.
\end{aligned}
\end{equation}
As we have
$0=t_{i_{l+1}}=\frac{v_{i_{l+1},i_{l+1}-1}}{u_{i_{l+1},i_{l+1}-1}}
\frac{u_{i_{l+1},i_{l+1}}}{v_{i_{l+1},i_{l+1}}}$ as in \eqref{relations in W_is},
if $s(j)\geqslant i_{l+1}$, the product \eqref{bridge product} must be zero,
which is a contradiction. \\
(2) It is sufficient to prove that $s\in \Stab_{\mathfrak S_n}^0(\tilde q)$ if and only if
$(p_{s(i)},p_{s(i)}')=(p_i,p_i')$ for every $i$. As $\mathfrak S_n$ acts on $W[n]\times \mathbb A^n$
by simultaneous permutations, $\Stab _{\mathfrak S_n}(\tilde \gamma)$ is a Young subgroup.
As we know that $\Stab _{\mathfrak S_n}^0(\tilde q)$ is also a Young subgroup,
we may assume that $s$ is a transposition $(i\; j)$ for $i<j$.
Then, $s\in \Stab _{\mathfrak S_n}^0(\tilde q)$
is equivalent to $\displaystyle \frac{x_i}{x_j}=1$ and $\tilde \gamma _2=\tilde q_2$ is $s$-invariant.
The first condition can be rewritten as
\[
f_i \cdot f_{i+1} \cdots f_{j-1} =
\frac{u_{i,i}}{v_{i,i}}\frac{v_{i+1,i}}{u_{i+1,i}}\cdot
\frac{u_{i+1,i+1}}{v_{i+1,i+1}}\frac{v_{i+2,i+1}}{u_{i+2,i+1}} \cdots
\frac{u_{j-1,j-1}}{v_{j-1,j-1}}\frac{v_{j,j-1}}{u_{j,j-1}} =1.
\]
Using the relations \eqref{relations in W_is}, one can further rephrase the condition as
\[
\begin{aligned}
1 &= \frac{u_{i,i}}{v_{i,i}}\cdot t_{i+1}\cdots t_{j-1}\cdot \frac{v_{j,j-1}}{u_{j,j-1}} \\
  &= \frac{u_{i,i}}{v_{i,i}}\left(\frac{v_{i,i}}{u_{i,i}}\frac{u_{i,i+1}}{v_{i,i+1}}\right)
  \cdots \left(\frac{v_{i,j-2}}{u_{i,j-2}}\frac{u_{i,j-1}}{v_{i,j-1}}\right) \frac{v_{j,j-1}}{u_{j,j-1}}\\
  & = \frac{u_{i,j-1}}{v_{i,j-1}}\frac{v_{j,j-1}}{u_{j,j-1}},
\end{aligned}
\]
which is clearly equivalent to $p_i=p_j$.
\end{proof}

Let us write $\Stab _{\mathfrak S_n}(\tilde \gamma) =\mathfrak S_{M(\tilde \gamma)}$,
where $\mathfrak S_{M(\tilde \gamma)}$ is a Young subgroup associated with a partition
$M(\tilde \gamma)=\{M(\tilde \gamma)_k\}$;
\[
\{1,\dots, n\}=\coprod _k M(\tilde \gamma)_k.
\]
Lemma \ref{comparison of stab of trivial angle type}, (1)
implies that by further renumbering of $\tilde \gamma$ staying inside
$X(\sigma _{1\dots n})= W_1\times _{\mathbb A^{n+1}}\cdots \times _{\mathbb A^{n+1}} W_n$,
we may assume that
\begin{equation}\label{convenient partition}
M(\tilde \gamma)_k = \{m_k,m_k+1,\dots, m_{k+1}-1\}
\end{equation}
for a sequence $1=m_1<m_2<\dots <m_{\nu}<m_{\nu+1}=n$ and
the partition $M(\tilde \gamma)$ is a sub-partition
of the partition determined by
$I_{\mathbf t(\tilde \gamma)}$. More precisely, for each $l$, we have a partition
\[
i_l = m_{k_l}<m_{k_l+1}<\dots <m_{k_l+\beta _l-1}<m_{k_l+\beta _l}=m_{k_{l+1}}=i_{l+1}.
\]
We prepare the following notation:
\begin{equation}\label{bunch of notations}
  \renewcommand{\arraystretch}{1.5}
  \begin{array}{lll}
  \multicolumn{2}{l}{\mu _k = m_{k+1} - m_k = \#(M(\tilde \gamma)_k),} \\
  \multicolumn{2}{l}{K_l = \{ k_l, k_l+1,\dots, k_l+\beta _l-1\},} \\
  K^d = \{k\, |\, \mu _k = d\}, & K^d_l = K_l\cap K^d, \\
  \multicolumn{2}{l}{K_l^* = \{ K^d_l \}_d\quad \mbox{a partition of $K_l$},} \\
  \mathbf M_l =\{ M(\tilde \gamma)_k\; | \; k\in K_l\}, \qquad &
  \mathbf M^d =\{ M(\tilde \gamma)_k\; |\; k\in K^d\}, \\
  \mathbf M^d_l = \mathbf M_l\cap \mathbf M_d. &
\end{array}
  \renewcommand{\arraystretch}{1}
\end{equation}

\begin{lemma}
  Notation as above.
  \begin{enumerate}[\rm (1)]
   \item $s\in \Stab _{\mathfrak S_n}(\tilde q)$
   induces a permutation of the set $\mathbf M_l^d$ for each $d$.
   \item There is an injective homomorphism
   \[
   \rho: G(\tilde q)=\Stab _{\mathfrak S_n}(\tilde q)/\Stab _{\mathfrak S_n}^0(\tilde q)
   \to \prod _l \mathfrak S_{K^*_l}
   \]
   where $\mathfrak S_{K^*_l}\subset \mathfrak SK_l$ is the Young subgroup
   associated with the partition $K^*_l$ of $K_l$.
\end{enumerate}
\end{lemma}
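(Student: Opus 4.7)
The plan is to reduce both parts to two facts already in hand: the block-preservation result of part (1) of the previous lemma, and the normality of $\Stab_{\mathfrak{S}_n}^0(\tilde q)=\mathfrak{S}_{M(\tilde\gamma)}$ inside $\Stab_{\mathfrak{S}_n}(\tilde q)$ stated in \cite{N}, Lemma 4.5. The key observation is that a Young subgroup $\mathfrak{S}_{M(\tilde\gamma)}$ is normal in a subgroup $H \subset \mathfrak{S}_n$ if and only if every $s \in H$ setwise permutes the parts $M(\tilde\gamma)_k$ (and hence must preserve their cardinalities).

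For part (1), I would first apply part (1) of Lemma \ref{comparison of stab of trivial angle type} to any $s \in \Stab_{\mathfrak{S}_n}(\tilde q)$: this forces $s$ to preserve each interval $\{i_l,i_l+1,\dots,i_{l+1}-1\}$, and therefore $s$ preserves the sub-collection $\mathbf{M}_l$ of parts of $M(\tilde\gamma)$ that lie in this interval. Next I would invoke normality: the conjugation identity $s\,\mathfrak{S}_{M(\tilde\gamma)_k}\,s^{-1}=\mathfrak{S}_{s(M(\tilde\gamma)_k)}$ together with $s\,\mathfrak{S}_{M(\tilde\gamma)}s^{-1}=\mathfrak{S}_{M(\tilde\gamma)}$ implies that $s(M(\tilde\gamma)_k)$ is again one of the parts $M(\tilde\gamma)_{k'}$, and since permutations preserve cardinality we must have $\mu_{k'}=\mu_k$. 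Combined with the interval-preservation, this says precisely that $s$ induces a permutation of $\mathbf{M}_l^d$ for every pair $(l,d)$.

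For part (2), I would define
\[
\rho : \Stab_{\mathfrak{S}_n}(\tilde q) \longrightarrow \prod_l \mathfrak{S}_{K_l^*}, \qquad s \longmapsto \bigl( (s|_{\mathbf{M}_l})_l \bigr),
\]
where each component records the induced permutation of the index set $K_l$ of parts in $\mathbf{M}_l$; by part (1) this permutation respects the refinement $K_l^* = \{K_l^d\}_d$ and so lies in the Young subgroup $\mathfrak{S}_{K_l^*} = \prod_d \mathfrak{S}_{K_l^d}$. The map $\rho$ is evidently a group homomorphism. To compute its kernel, note that $s \in \ker\rho$ means $s(M(\tilde\gamma)_k)=M(\tilde\gamma)_k$ for every $k$, hence $s$ lies in the product $\prod_k \mathrm{Sym}(M(\tilde\gamma)_k) = \mathfrak{S}_{M(\tilde\gamma)} = \Stab_{\mathfrak{S}_n}^0(\tilde q)$; conversely every element of $\Stab_{\mathfrak{S}_n}^0(\tilde q)$ visibly preserves each part, so $\ker\rho = \Stab_{\mathfrak{S}_n}^0(\tilde q)$. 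Consequently $\rho$ descends to an injection $G(\tilde q) \hookrightarrow \prod_l \mathfrak{S}_{K_l^*}$, as claimed.

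The whole argument is essentially formal once the normality statement is in hand; the only mildly delicate point, and the main obstacle to organising the proof cleanly, is keeping track of the two independent refinements of $\{1,\dots,n\}$ at play — the coarser one by the intervals $\{[i_l,i_{l+1}-1]\}$ coming from $I_{\mathbf{t}(\tilde\gamma)}$, and the finer one by the parts of $M(\tilde\gamma)$ — and verifying that the induced action on indices $K_l$ respects the decomposition by size $K_l^* = \{K_l^d\}_d$, which is exactly what part (1) supplies.
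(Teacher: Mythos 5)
Your proposal is correct and follows essentially the same route as the paper: both arguments derive the block permutation from the fact that $s$ normalizes the Young subgroup $\Stab_{\mathfrak S_n}^0(\tilde q)=\mathfrak S_{M(\tilde\gamma)}$ (the paper conjugates the full cycles $c_k$ on each block, you conjugate the whole block subgroups, which is the same computation), combine this with part (1) of Lemma \ref{comparison of stab of trivial angle type} to confine the permutation to each $\mathbf M_l^d$, and then identify the kernel of the induced map to $\prod_l\mathfrak S_{K_l^*}$ with $\mathfrak S_{M(\tilde\gamma)}$ to obtain the injection on $G(\tilde q)$.
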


\begin{proof}
(1) Take a cyclic permutation $c_k = (m_k\; \cdots \; m_{k+1}-1)\in \mathfrak SM(\tilde \gamma)_k$
in accordance with \eqref{convenient partition} for each $k$.
Since an element $s\in \Stab _{\mathfrak S_n}(\tilde q)/\Stab _{\mathfrak S_n}^0(\tilde q)$
normalizes $\Stab _{\mathfrak S_n}^0(\tilde q)= \mathfrak S_{M(\tilde \gamma)}$,
$sc_ks^{-1}=(s(m_k)\; \cdots \; s(m_{k+1}-1))\in \mathfrak S_{M(\tilde \gamma)}$.
Therefore, there is $k'$ such that $\mu _{k'}\geqslant \mu _k$
and $s(m_k),\dots, s(m_{k+1}-1)\in M(\tilde \gamma)_{k'}$. By a similar argument for
$s^{-1}c_{k'}s$, we conclude that $\mu _k=\mu _{k'}$. This implies that $s$ induces a
permutation of the set $\mathbf M^d$ for each $d$.
Moreover, Lemma \ref{comparison of stab of trivial angle type}, (1) asserts that
this permutation leave $\mathbf M_l^d$ invariant. \\
(2) (1) implies that an element of $\Stab _{\mathfrak S_n}(\tilde q)$ induces
a permutation of the set $K^d_l$. Therefore, we have a natural map
$\tilde \rho: \Stab _{\mathfrak S_n}(\tilde q)\to \prod \mathfrak S_{K^*_l}$.
Taking (1) into account, it is straightforward to check that for $s,s'\in \Stab _{\mathfrak S_n}(\tilde q)$,
$\tilde \rho (s)=\tilde \rho (s')$ if and only if $s^{-1}s'\in \mathfrak S_{M(\tilde \gamma)}
=\Stab _{\mathfrak S_n}^0(\tilde q)$, and hence $\tilde q$ decends to
an injective homomorphism $\rho: G(\tilde q)\to \prod \mathfrak S_{K^*_l}$.
\end{proof}

\begin{lemma}\label{torus stabilizer group}
 Let $\gamma\in \Sym ^n(S[n]/\mathbb A^{n+1})^{ss}$ and assume
 $I_{\mathbf t(\gamma)}=\{i_1<\dots <i_r\}$. Let us
 denote the restriction of $\gamma$ to $\Delta ^{i_l,\circ}\times \mathbb A^1$ by $\gamma _l\; (l=0,\dots, r)$.
 Then, the stabilizer subgroup of $\gamma$ under the action of $G[n]$ is given by
 \[
 \Stab _{G[n]}(\gamma) = \prod _{1\leqslant l \leqslant r-1} \Stab (\gamma _l),
 \]
 where
 \[
\Stab (\gamma _l) = \{\tau \in \mathbb C^*\; |\; \tau \cdot \gamma _l=\gamma _l\}
 \]
 is the stabilizer subgroup where
 $\mathbb C^*$ acts on $\Delta ^{i_l, \circ}\times \mathbb A^1$ by multiplication on the first
 factor and trivially on the second factor.
\end{lemma}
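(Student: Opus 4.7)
The plan is to compute $\Stab_{G[n]}(\gamma)$ by first reducing to the stabilizer of the image $\mathbf t(\gamma)\in\mathbb A^{n+1}$, and then diagonalizing the residual torus action across the irreducible components of the fiber $X[n]_{\mathbf t(\gamma)}$. Any $g=(g_1,\dots,g_{n+1})\in G[n]$ fixing $\gamma$ must in particular fix $\mathbf t(\gamma)$, which forces $g_i=1$ for every $i\notin I_{\mathbf t(\gamma)}$; combined with the defining relation $g_1\cdots g_{n+1}=1$, this singles out the $(r-1)$-dimensional subtorus
\[
\Stab_{G[n]}(\mathbf t(\gamma))=\{(g_{i_1},\dots,g_{i_r})\in(\mathbb C^*)^r \; | \; g_{i_1}g_{i_2}\cdots g_{i_r}=1\}.
\]

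The next step is to understand how this subtorus acts on each component of $X[n]_{\mathbf t(\gamma)}$. The two affine endpoints $\Delta^0$ and $\Delta^{i_r}$ are the strict transforms of the coordinate axes $\{y=0\}$ and $\{x=0\}$ of the original surface $X=\mathbb A^2$; since $x=t_1\cdots t_{n+1}/s$ and $y=s$ are manifestly invariant under $G[n]$ (which only rescales the $t_j$'s subject to $\prod t_j=1$), the torus acts trivially on $\Delta^{0,\circ}$ and on $\Delta^{i_r,\circ}$. For a middle component $\Delta^{i_l}$ with $1\leqslant l\leqslant r-1$, the affine coordinate $u_{i_l}/v_{i_l}$ on $\Delta^{i_l,\circ}\cong\mathbb C^*$ is identified via the monomial map $\Phi'[n]$ with $t_{i_l+1}\cdots t_{n+1}/s$; using the parametrization $g_j=\tau_j/\tau_{j-1}$ (with $\tau_0=\tau_{n+1}=1$) one computes $g_{i_l+1}\cdots g_{n+1}=\tau_{i_l}^{-1}$, so the subtorus acts on this coordinate by multiplication through the character $\tau_{i_l}^{-1}=(g_{i_1}g_{i_2}\cdots g_{i_l})^{-1}$.

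Since the relation $\tau_{i_r}=g_{i_1}\cdots g_{i_r}=1$ defining $G[n]$ already constrains the last character, the assignment $g\mapsto(\tau_{i_1},\dots,\tau_{i_{r-1}})$ is an isomorphism $\Stab_{G[n]}(\mathbf t(\gamma))\cong(\mathbb C^*)^{r-1}$ under which the $l$-th factor acts by multiplication on $\Delta^{i_l,\circ}$ and trivially on all other components. Consequently $g\cdot\gamma=\gamma$ if and only if $\tau_{i_l}\cdot\gamma_l=\gamma_l$ for each $l=1,\dots,r-1$, the pieces $\gamma_0$ and $\gamma_r$ being automatically fixed; this yields the asserted product decomposition. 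The main technical point is the character computation on the $\mathbb P^1$-components, which is essentially a bookkeeping exercise in the toric charts $W_k$ developed in the preceding subsection; once this is set up, the diagonalization of the stabilizer and the factorization over the middle components follow formally.
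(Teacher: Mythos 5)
Your argument is correct and follows essentially the same route as the paper's proof: reduce to the subtorus fixing $\mathbf t(\gamma)$, pass to the $\tau$-coordinates $\tau_{i_l}=g_{i_1}\cdots g_{i_l}$, and observe that each $\tau_{i_l}$ acts by scaling only on $\Delta^{i_l,\circ}$ and trivially elsewhere, yielding the product decomposition. (The only quibble is that $t_{i_l+1}\cdots t_{n+1}/s$ is $v_{i_l}/u_{i_l}$ rather than $u_{i_l}/v_{i_l}$, so your character is the inverse of the paper's, which is immaterial for the stabilizer.)
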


\begin{proof}
Let us take $(\lambda _1,\dots, \lambda _{n+1})\in G[n]$ with $\lambda _1\dots \lambda _{n+1}=1$.
If $t_i(\gamma)\neq 0$, then $\lambda _i$ acts freely in the orbit $G[n]\cdot \gamma$.
Therefore, the stabilizer subgroup $\Stab _{G[n]}(\gamma)$ is actually a subgroup of
$(\mathbb C^*)^{r-1}$ with coordinate $(\lambda _{i_1},\dots, \lambda _{i_r})$ up to
the relation $\lambda _{i_1}\cdots \lambda _{i_r}=1$. Since $u_{i_l}/v_{i_l}$ gives a coordinate of
$\Delta ^{i_l,\circ}$,
if we introduce $\tau$-coordinate
\[
\tau _{i_l} = \lambda _{i_1}\cdots \lambda _{i_l}
\]
as in \S\ref{GHH linearization}, $\tau _{i_l}$ acts on $\Delta ^{i_l, \circ}$ by multiplication
and trivially on the other components $\Delta ^{i_j,\circ}$, from which the lemma immediately follows.
\end{proof}

Let us take a sufficiently small neighborhood $\tilde U_{\tilde q}$ of $\tilde q\in \tilde Z^{(n)}$ and
replace $\tilde V_{\tilde \gamma}$ by its inverse image so that the quotient map
$W[n]^{ss}\times \mathbb A^n\to \tilde Z^{(n)}$ restricts to an isomorphism
$\tilde V_{\tilde \gamma}\overset{\sim}{\to}\tilde U_{\tilde q}$.
Lemma \ref{comparison of stab of trivial angle type}, (2) asserts that
we have an induced isomorphism
\[
V_{\gamma} = \tilde V_{\tilde \gamma}/\Stab _{\mathfrak S_n}(\tilde \gamma)
\overset{\sim}{\lto} \tilde U_{\tilde q}/\Stab _{\mathfrak S_n}^0(\tilde q)=U_{\tilde q}.
\]
Here we note that $V_{\gamma}$ is identified with a slice at the image
$\gamma\in \Sym  ^n(S[n]/\mathbb A^{n+1})^{ss}$ of $\tilde \gamma$ with respect to
the quotient map $\Sym  ^n(S[n]/\mathbb A^{n+1})^{ss}\to Z^{(n)}$.

\begin{theorem}\label{thm: comparison of stabilizers}
The quotient map $\Sym^n(S[n]/\mathbb A^{n+1})^{ss}\to Z^{(n)}$ induces
an isomorphism
\[
\Stab _{G[n]}(\gamma)
\overset{\sim}{\lto}
G(\tilde q)
= \Stab _{\mathfrak S_n}(\tilde q)/ \Stab _{\mathfrak S_n}^0(\tilde q)
\]
and the isomorphism $V_{\gamma}\overset{\sim}{\lto} U_{\tilde q}$ is equivariant.
\end{theorem}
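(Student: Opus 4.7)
The plan is to realize both $\Stab_{G[n]}(\gamma)$ and $G(\tilde q)$ as canonical quotients of the joint stabilizer
\[
H = \Stab_{G[n]\times \mathfrak S_n}(\tilde\gamma)
\]
of the lift $\tilde\gamma \in (W[n]\times \mathbb A^n)^{ss}$, exploiting the fact that the $G[n]$-action on the semistable locus is free. Once freeness is established, the isomorphism is a short diagram chase.

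First I would verify freeness of $G[n]$ on $(W[n]\times \mathbb A^n)^{ss}$. In the chart $X(\sigma_{1\ldots n})^{ss}\times \mathbb A^n$ the preceding Proposition identifies the slice $\tilde V_{\tilde\gamma}$ with $X(\delta^{(n)})\times \mathbb A^n$ along the quotient map; inspecting $\alpha_W[n]$ from \S\ref{GHH linearization}, one reads off that the $G[n]$-weight of the toric coordinate $w_j$ is $\tau_j$ for $j=1,\ldots,n$. Since $w_1\cdots w_n\neq 0$ on the semistable locus, only the identity of $G[n]$ can stabilize a point. By $\mathfrak S_n$-translation the same holds on all of $(W[n]\times \mathbb A^n)^{ss}$, and consequently the $G[n]$-orbits coincide with the fibres of the quotient map to $\tilde Z^{(n)}$.

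Since the two actions commute, $(\lambda,s)\in H$ iff $s\tilde\gamma = \lambda^{-1}\tilde\gamma$; such an $s$ must send $\tilde\gamma$ into its own $G[n]$-orbit, hence must fix $\tilde q$, and then freeness determines $\lambda = \lambda(s)$ uniquely. This produces a canonical group isomorphism $\sigma:\Stab_{\mathfrak S_n}(\tilde q)\overset{\sim}{\to} H$, $s\mapsto(\lambda(s),s)$. Composing $\sigma^{-1}$ with the projection $H\to G[n]$—whose image is contained in $\Stab_{G[n]}(\gamma)$ by construction, which is surjective onto it (any $\lambda\in\Stab_{G[n]}(\gamma)$ satisfies $\lambda\tilde\gamma = s^{-1}\tilde\gamma$ for some $s\in \mathfrak S_n$, since both points lie over $\gamma$), and whose kernel is $\{1\}\times \Stab_{\mathfrak S_n}(\tilde\gamma) = \{1\}\times \Stab^0_{\mathfrak S_n}(\tilde q)$ by Lemma \ref{comparison of stab of trivial angle type}\,(2)—yields the desired isomorphism
\[
\Stab_{G[n]}(\gamma) \;\cong\; \Stab_{\mathfrak S_n}(\tilde q)/\Stab^0_{\mathfrak S_n}(\tilde q) \;=\; G(\tilde q).
\]

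For equivariance of $V_\gamma\overset{\sim}{\to} U_{\tilde q}$, the slice $\tilde V_{\tilde\gamma}$ is identified with $\tilde U_{\tilde q}$ via the $G[n]$-quotient map. Given $\lambda\in \Stab_{G[n]}(\gamma)$ with partner $s = \sigma^{-1}(\lambda,s)$ and $\tilde\gamma'\in \tilde V_{\tilde\gamma}$ close to $\tilde\gamma$, the point $\lambda\tilde\gamma'$ generally leaves the slice, but $s\lambda\tilde\gamma'$ lies in $\tilde V_{\tilde\gamma}$ again by continuity (since $s\lambda\tilde\gamma=\tilde\gamma$); this is the representative of $\lambda\cdot[\tilde\gamma']$ in the slice. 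Passing to $\tilde Z^{(n)}$, $\lambda$ acts trivially while $s$ acts as an element of $\Stab_{\mathfrak S_n}(\tilde q)$, whose induced action on $U_{\tilde q}=\tilde U_{\tilde q}/\Stab^0_{\mathfrak S_n}(\tilde q)$ is precisely that of $[s]\in G(\tilde q)$. The main obstacle is the freeness verification (which is what unlocks the identification of fibres with $G[n]$-orbits); a lesser but necessary subtlety is shrinking the slice so that $s\lambda\tilde\gamma'$ remains in $\tilde V_{\tilde\gamma}$ for all points in the chosen neighbourhood.
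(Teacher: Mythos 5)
Your argument for the group isomorphism is correct and takes a genuinely different route from the paper's. The paper proceeds by explicit combinatorics: it decomposes $\Stab_{G[n]}(\gamma)$ into the cyclic factors $\Stab(\gamma_l)$, exhibits for each generator $\tau_l$ an explicit product of cyclic permutations $c_{m_{k_l},i,\kappa}$ fixing the block of invariant coordinates $f_k$, and checks surjectivity through the embedding $\rho: G(\tilde q)\to \prod_l \mathfrak S_{K^*_l}$. You instead observe that $G[n]$ acts freely on the \emph{ordered} configuration space $(W[n]\times\mathbb A^n)^{ss}$ --- which is correct: $w_j$ has $G[n]$-weight $\tau_j$, the $\tau_j$ form a basis of $M_G[n]$, $w_1\cdots w_n\neq 0$ cuts out the semistable locus of the chart $X(\sigma_{1\dots n})$, and the $\mathfrak S_n$-translates of that chart cover $W[n]^{ss}$ --- and then run a diagram chase on the joint stabilizer $H=\Stab_{G[n]\times\mathfrak S_n}(\tilde\gamma)$. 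Combined with ``semistable $=$ stable'' (Remark \ref{remark on numerical criterion}), so that fibres of the quotient map are single orbits, this gives $\Stab_{\mathfrak S_n}(\tilde q)\cong H$ and then $\Stab_{G[n]}(\gamma)\cong H/(\{1\}\times\Stab_{\mathfrak S_n}(\tilde\gamma))\cong G(\tilde q)$, with Lemma \ref{comparison of stab of trivial angle type}\,(2) identifying the kernel. This is shorter and more structural than the paper's computation; what it does not deliver is the explicit description of $G(\tilde q)$ by cyclic generators that the paper's construction produces along the way.

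The equivariance paragraph, however, contains a genuine error: $s\lambda\tilde\gamma'$ does \emph{not} lie in $\tilde V_{\tilde\gamma}$ for $\tilde\gamma'\in\tilde V_{\tilde\gamma}$ near $\tilde\gamma$, and no shrinking of the slice repairs this. The slice is cut out by the equations $w_j=c_j$, and the toric automorphism $s\lambda$ pulls $w_j$ back to a constant times a monomial that is in general non-constant on $\tilde V_{\tilde\gamma}$; already for $n=2$ and $s=(1\;2)$ one has $s^*w_1 = u_{21}/v_{21}=1/w_4$, and $w_4$ is a free coordinate on the slice, so $s\lambda\tilde V_{\tilde\gamma}$ meets $\tilde V_{\tilde\gamma}$ only at $\tilde\gamma$. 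Continuity only places $s\lambda\tilde\gamma'$ near $\tilde\gamma$, not on the slice. What is true --- and what rescues your argument --- is that $s\lambda\tilde\gamma'$ lies in the $G[n]$-orbit of a unique point of $\tilde V_{\tilde\gamma}$, namely the point over $s\cdot\pi(\tilde\gamma')$, where $\pi:\tilde V_{\tilde\gamma}\overset{\sim}{\to}\tilde U_{\tilde q}$ is the quotient map; the action of $\Stab_{G[n]}(\gamma)$ on the slice must therefore be \emph{defined} by this retraction along $G[n]$-orbits (equivalently, by transporting the residual action through $\pi$), and with that definition your closing observation --- that in $\tilde Z^{(n)}$ the element $\lambda$ acts trivially while $s$ acts through $[s]\in G(\tilde q)$ --- does establish equivariance. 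As written, the intermediate claim is false and the notion of ``the action on $V_\gamma$'' being compared is left ambiguous, so this part needs to be reworked.
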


Before giving a proof of this theorem, let us look at a handy case.

\begin{example}
（1)\; Let $n=9$ and consider $\gamma =\sum _{i=1}^9 (p_i,p'_i)\in
\Sym ^9(S[9]/\mathbb A ^{10})^{ss}$.
Let us assume $I_{\mathbf t(\gamma)}=\{1<7<10\}$.
Then, the fiber $X[9]_{\mathbf t(\gamma)}$ consists of
4 components. By stablilty, six points are on $\Delta ^{1,\circ}\times \mathbb A^1$
and three points are on $\Delta ^{7,\circ}\times \mathbb A^1$.
\begin{center}
\scalebox{.8}{%WinTpicVersion4.32a
{\unitlength 0.1in%
\begin{picture}(56.0000,14.0000)(2.0000,-18.0000)%
% LINE 2 0 3 0 Black White  
% 2 600 1800 200 400
% 
\special{pn 8}%
\special{pa 600 1800}%
\special{pa 200 400}%
\special{fp}%
% LINE 2 0 3 0 Black White  
% 2 5400 1800 5800 400
% 
\special{pn 8}%
\special{pa 5400 1800}%
\special{pa 5800 400}%
\special{fp}%
% STR 2 0 3 0 Black White  
% 4 400 500 400 600 1 0 0 0
% $\Delta ^0$
\put(4.0000,-6.0000){\makebox(0,0)[lt]{$\Delta ^0$}}%
% STR 2 0 3 0 Black White  
% 4 1020 960 1020 1060 5 0 0 0
% $\Delta ^1$
\put(10.2000,-10.6000){\makebox(0,0){$\Delta ^1$}}%
% STR 2 0 3 0 Black White  
% 4 5600 500 5600 600 4 0 0 0
% $\Delta ^{10}$
\put(56.0000,-6.0000){\makebox(0,0)[rt]{$\Delta ^{10}$}}%
% ELLIPSE 2 0 3 0 Black White  
% 4 2000 1600 400 1200 3600 1600 400 1600
% 
\special{pn 8}%
\special{ar 2000 1600 1600 400 3.1415927 6.2831853}%
% ELLIPSE 2 0 3 0 Black White  
% 4 4400 1600 3200 2000 5600 1600 3000 1600
% 
\special{pn 8}%
\special{ar 4400 1600 1200 400 3.1415927 6.2831853}%
% STR 2 0 3 0 Black White  
% 4 3820 960 3820 1060 5 0 0 0
% $\Delta ^7$
\put(38.2000,-10.6000){\makebox(0,0){$\Delta ^7$}}%
% DOT 0 0 0 0 Black White  
% 10 800 1340 1200 1260 1600 1220 2000 1210 2400 1220 2800 1270 3800 1260 4400 1200 5000 1250 5000 1250
% 
\special{pn 4}%
\special{sh 1}%
\special{ar 800 1340 16 16 0 6.2831853}%
\special{sh 1}%
\special{ar 1200 1260 16 16 0 6.2831853}%
\special{sh 1}%
\special{ar 1600 1220 16 16 0 6.2831853}%
\special{sh 1}%
\special{ar 2000 1210 16 16 0 6.2831853}%
\special{sh 1}%
\special{ar 2400 1220 16 16 0 6.2831853}%
\special{sh 1}%
\special{ar 2800 1270 16 16 0 6.2831853}%
\special{sh 1}%
\special{ar 3800 1260 16 16 0 6.2831853}%
\special{sh 1}%
\special{ar 4400 1200 16 16 0 6.2831853}%
\special{sh 1}%
\special{ar 5000 1250 16 16 0 6.2831853}%
\special{sh 1}%
\special{ar 5000 1250 16 16 0 6.2831853}%
% STR 2 0 3 0 Black White  
% 4 800 1350 800 1450 1 0 0 0
% $p_1$
\put(8.0000,-14.5000){\makebox(0,0)[lt]{$p_1$}}%
% STR 2 0 3 0 Black White  
% 4 1200 1290 1200 1390 1 0 0 0
% $p_2$
\put(12.0000,-13.9000){\makebox(0,0)[lt]{$p_2$}}%
% STR 2 0 3 0 Black White  
% 4 1600 1250 1600 1350 1 0 0 0
% $p_3$
\put(16.0000,-13.5000){\makebox(0,0)[lt]{$p_3$}}%
% STR 2 0 3 0 Black White  
% 4 2000 1250 2000 1350 1 0 0 0
% $p_4$
\put(20.0000,-13.5000){\makebox(0,0)[lt]{$p_4$}}%
% STR 2 0 3 0 Black White  
% 4 2400 1280 2400 1380 1 0 0 0
% $p_5$
\put(24.0000,-13.8000){\makebox(0,0)[lt]{$p_5$}}%
% STR 2 0 3 0 Black White  
% 4 2800 1320 2800 1420 1 0 0 0
% $p_6$
\put(28.0000,-14.2000){\makebox(0,0)[lt]{$p_6$}}%
% STR 2 0 3 0 Black White  
% 4 3800 1290 3800 1390 1 0 0 0
% $p_7$
\put(38.0000,-13.9000){\makebox(0,0)[lt]{$p_7$}}%
% STR 2 0 3 0 Black White  
% 4 4400 1220 4400 1320 1 0 0 0
% $p_8$
\put(44.0000,-13.2000){\makebox(0,0)[lt]{$p_8$}}%
% STR 2 0 3 0 Black White  
% 4 5000 1290 5000 1390 1 0 0 0
% $p_9$
\put(50.0000,-13.9000){\makebox(0,0)[lt]{$p_9$}}%
\end{picture}}%
}
\end{center}
Let $\zeta _1=\frac{u_{11}}{v_{11}}$
and $\zeta _4=\frac{u_{41}}{v_{41}}$, the coordinate of $p_1$ and $p_4$,
respectively, and $\zeta _7=\frac{u_{77}}{v_{77}}$, the coordinate of $p_7$.
Let $\omega $ be a primitive third root of unity, and assume
\[
\begin{aligned}
  p_2&= \omega p_1,\quad p_3 = \omega^2 p_1, &p'_1&=p'_2=p'_3\\
  p_5&= \omega p_4,\quad p_6 = \omega^2 p_4, &p'_4&=p'_5=p'_6\\
  p_8&= \omega p_7,\quad p_9 = \omega^2 p_7, &p'_7&=p'_8=p'_9
\end{aligned}
\]
Furthermore, we assume that $(p_1,p_1')$ and $(p_4, p_4')$
are in general position,
namely $p_1\neq p_4$ or
the ratio $\zeta _1/\zeta _4$ is not a sixth root of unity.
Then by Lemma \ref{torus stabilizer group},
$\Stab _{G[n]}(\gamma)\cong (\mathbb Z/3\mathbb Z)^2$,
which is generated by multiplications of $\omega$ on
$\Delta ^{1,\circ}$ and $\Delta ^{7,\circ}$.
Now we calculate the value of the invariant functions $f_k$.
Since $t_1=t_{10}=0$, we have $f_0=f_9=0$, while
$\displaystyle f_k=\frac{u_{kk}}{v_{kk}}\frac{v_{k+1,k}}{u_{k+1,k}}$,
$\displaystyle \frac{v_{k,j-1}}{u_{k,j-1}}\frac{u_{k,j}}{v_{k,j}}=t_j(\gamma)$,
and $t_7=0$
imply that
\[
f_1=f_2=f_4=f_5=f_7=f_8=\omega^{-1} \mbox{\quad and\quad } f_6=0.
\]
In terms of the toric coordinate of $X(\delta ^{(n)})$, the image point $\tilde q_1$ has
coordinate
\[
\left(f_0,\frac{\xi _1}{\xi _2},\frac{\xi _2}{\xi _3},\frac{\xi _3}{\xi _4},\frac{\xi _4}{\xi _5},\frac{\xi _5}{\xi _6},
\frac{\xi _6}{\xi _7},\frac{\xi _7}{\xi _8},\frac{\xi _8}{\xi _9},f_9\right) =
\left(0,\omega^{-1},\omega^{-1}, f_3, \omega^{-1},\omega^{-1}, 0,
\omega^{-1},\omega^{-1},0\right).
\]
A permutation of $\{\xi _1,\xi _2,\xi _3\}$ that preserves the ratio $\xi _i/\xi _{i+1}$ is
either of cyclic permutations $(1\; 2\; 3)$ or $(1\; 3\; 2)$.
We have the same thing for $\{\xi _4,\xi _5,\xi _6\}$ and $\{\xi _7,\xi _8,\xi _9\}$.
The permutations $(1\; 2\; 3)$ alone does not fix the point $\tilde q$ because
\[
(1\; 2\; 3)\cdot \frac{\xi _3}{\xi _4}
=\frac{\xi _1}{\xi _4} = \frac{\xi _1}{\xi _2}\frac{\xi _2}{\xi _3}\frac{\xi _3}{\xi _4}
=\omega ^{-2} \frac{\xi _3}{\xi _4}= \omega f_3
\]
and $f_3$ is non-zero as $t_4\neq 0$. More generalliy, as we have
\begin{equation}\label{transformation at joint}
(1\; 2\; 3)^a(4\; 5\; 6)^b \cdot \frac{\xi _3}{\xi _4}
= \omega ^{a-b} \frac{\xi _3}{\xi _4}
\end{equation}
the permutation is in $\Stab _{\mathfrak S_9}(\tilde q)$ only if $a-b=0$.
On the other hand, $(7\; 8\; 9)$ fixes the point $\tilde q$
as $\displaystyle \frac{\xi _6}{\xi _7}=0$. Therefore, we know that
\[
G(\tilde q) = \Stab _{\mathfrak S_9}(\tilde q)=
\langle (1\; 2\; 3)(4\; 5\; 6),\, (7\; 8\; 9)\rangle
\cong (\mathbb Z/3\mathbb Z)^2 \cong \Stab _{G[n]}(\gamma)
\]
as stated in the theorem (note that $\Stab _{\mathfrak S_9}^0(\tilde q)=\{\id \}$ in
this case).

\noindent
(2) Next we consider the case $n=6$ and
$\gamma =\sum _{i=1}^6 (p_i,p'_i)\in
\Sym ^6(S[6]/\mathbb A ^{7})^{ss}$ with $I_{\mathbf t(\gamma)}=\{1<7\}$.
We assume further
\[
(p_1,p'_1)=(p_2,p'_2),\; (p_3,p'_3)=(p_4,p'_4),\;
(p_5,p'_5)=(p_6,p'_6)
\]
so that $\gamma = 2(p_1,p'_1)+2(p_3,p'_3)+2(p_5,p'_5)$,
and for a primitive third root of unity $\omega$
\[
(p_3,p'_3)=\omega\cdot (p_1,p'_1),\quad
(p_5,p'_5)=\omega^2\cdot (p_1,p'_1).
\]
In this case, the coordinate of $\tilde q_1$ is given by
\[
\left(f_0,\frac{\xi _1}{\xi _2},\frac{\xi _2}{\xi _3},\frac{\xi _3}{\xi _4},
\frac{\xi _4}{\xi _5},\frac{\xi _5}{\xi _6},f_6\right)
=(0,1,\omega ^{-1},1,\omega ^{-1},1,0).
\]
and
\[
\Stab _{\mathfrak S_6}^0(\tilde q)
=\mathfrak S\{1,2\}\times\mathfrak S\{3,4\}\times\mathfrak S\{5,6\}.
\]
Therefore a cyclic permutation $(1\; 3\; 5\; 2\; 4\; 6)$
gives an element of $\Stab _{\mathfrak S_6}(\tilde q)$ and
its residue class generates $G(\tilde q)$. Thus we know
$\Stab _{G[n]}(\gamma)\cong G(\tilde q)\cong \mathbb Z/3\mathbb Z$.
\end{example}

\begin{proof}[Proof of Theorem \ref{thm: comparison of stabilizers}]
We keep all the assumptions and the notation above.
The equality $\Stab _{\mathfrak S_n}(\tilde \gamma)=\mathfrak S_{M(\tilde \gamma)}$ for the
partition $\{M(\tilde \gamma)_k\}$ in Lemma \ref{comparison of stab of trivial angle type}
implies that
\[
(p_{m_k},p'_{m_k})=(p_{m_k+1},p'_{m_k+1})=\dots
=(p_{m_{k+1}-1},p'_{m_{k+1}-1})\neq (p_{m_{k+1}},p'_{m_k+1})
\]
and the corresponding cycle $\gamma \in \Sym ^n(S[n]/\mathbb A^{n+1})^{ss}$ is of the form
\[
\gamma =\sum _k \mu_k (p_{m_k},p'_{m_k}),
\]
where $\mu _k = m _{k+1} - m_k$. The restriction $\gamma _l$ of $\gamma$ to
$\Delta ^{i_l,\circ}\times \mathbb A^1$ is
\[
\gamma _l = \sum _{j=0}^{\beta _l-1} \mu _{k_l+j}(p_{m_{k_l+j}},p'_{m_{k_l+j}}).
\]
Now we assume
\[
\mu _{k_l}\geqslant \mu _{k_l+1}\geqslant \dots \geqslant \mu _{k_l+\beta_l-1}
\]
by a further renumbering. Then there is a partition of $\beta _l$
\[
0=\beta _{l,0}<\beta _{l,1}<\cdots <\beta _{l,c_l-1}<\beta _l
\]
and
\[
d_{l,0}>d_{l,1}>\dots >d_{l,c_l-1}>0
\]
such that
\[
d_{l,i}=\mu _{k_l+\beta _{l,i}} = \dots = \mu _{k_l+\beta _{l,i+1}-1}.
\]
Being a finite subgroup of $\mathbb C^*$, $\Stab (\gamma _l)$ is a cyclic group of finite order
consisting of roots of unity. Let $\tau _l\in \mathbb C^*$ be a generator and $r_l$ the order
of $\tau _l$. The action of $\tau _l$ induces a cyclic permutation among the set of points
\[
\{(p_{m_{k_l+j}},p'_{m_{k_l+j}})\, | \, \beta _{l,i}\leqslant j<\beta _{l,i+1}\}
\]
and decomposes the set into a disjoint union of orbits each of which consists of $r_l$ points.
In particular, we have $\beta _{l,i+1}-\beta _{l,i}=r_l\cdot \beta'_{l,i}$ for
some positive integer $\beta'_{l,i}$. We may assume for $0\leqslant \kappa <\beta _{l_i}$
and $0\leqslant j< r_l$,
\[
(p_{m_{k_l+\beta _{l,i}+\kappa r_l+j}},p'_{m_{k_l+\beta _{l,i}+\kappa r_l+j}})
= \tau _l^j (p_{m_{k_l+\beta _{l,i}+\kappa r_l}},p'_{m_{k_l+\beta _{l,i}+\kappa r_l}}).
\]
Summerizing, we configured the index set $K_l^{d_{l,i}}$ as the following:
\[
\xymatrix @R=3pt @C=0pt {
m_{k_l+\beta _{l,i}+\kappa r_l} \ar@/_50pt/[d]_{\tau _l}
  \ar@{<->}@/^20pt/[rrr]^{\mathfrak S_n\mbox{\scriptsize -action}}
  & m_{k_l+\beta _{l,i}+\kappa r_l}+1 & \cdots & m_{k_l+\beta _{l,i}+\kappa r_l}+d_{l,i}-1 \\
m_{k_l+\beta _{l,i}+\kappa r_l+1} \ar@/_50pt/[d]_{\tau _l}
  & m_{k_l+\beta _{l,i}+\kappa r_l+1}+1 & \cdots & m_{k_l+\beta _{l,i}+\kappa r_l+1} +d_{l,i}-1 \\
\qquad \vdots \qquad \ar@/_50pt/[d]_{\tau _l} &\qquad \vdots \qquad & &\qquad \vdots \qquad\\
m_{k_l+\beta _{l,i}+(\kappa +1)r_l-1} & m_{k_l+\beta _{l,i}+(\kappa +1)r_l-1}+1
  & \cdots & m_{k_l+\beta _{l,i}+(\kappa +1)r_l-1} +d_{l,i}-1 \\
}
\]
As the invariant function $\displaystyle \frac{\xi _k}{\xi _{k+1}}=f_k$ is the ratio of the
$u_k/v_k$-coordinates of $p_k$ and $p_{k+1}$ by \eqref{local quotient map},
we get
\[
\begin{pmatrix}
f_{m_{k_l+\beta _{l,i}+\kappa r_l}} & \cdots
& f_{m_{k_l+\beta _{l,i}+\kappa r_l}+d_{l,i}-1} \\
f_{m_{k_l+\beta _{l,i}+\kappa r_l+1}} & \cdots & f_{m_{k_l+\beta _{l,i}+\kappa r_l+1} +d_{l,i}-1}\\
&\vdots \\
f_{m_{k_l+\beta _{l,i}+(\kappa +1)r_l-1}} & \cdots & f_{m_{k_l+\beta _{l,i}+(\kappa +1)r_l-1} +d_{l,i}-1}
\end{pmatrix}
=
\begin{pmatrix}
  1 & 1 & \cdots & 1 & \tau _l^{-1} \\
  1 & 1 & \cdots & 1 & \tau _l^{-1} \\
  && \vdots \\
  1 & 1 & \cdots & 1 & \tau _l^{-1}
\end{pmatrix}
\]
Therefore, a cyclic permutation $c_{m_{k_l},i,\kappa}$ of length $r_l\cdot d_{l,i}$ ``along the column''
\small
\[
\xymatrix @R=3pt @C=5pt {
m_{k_l+\beta _{l,i}+\kappa r_l} \ar@/_40pt/[d]
  &
  & m_{k_l+\beta _{l,i}+\kappa r_l}+1 \ar@/_50pt/[d]
  &
  & \cdots
  &
  & m_{k_l+\beta _{l,i}+\kappa r_l}+d_{l,i}-1 \ar@/_60pt/[d] \\
m_{k_l+\beta _{l,i}+\kappa r_l+1} \ar@/_40pt/[d]
  &
  & m_{k_l+\beta _{l,i}+\kappa r_l+1}+1 \ar@/_50pt/[d]
  &
  & \cdots
  &
  & m_{k_l+\beta _{l,i}+\kappa r_l+1} +d_{l,i}-1 \ar@/_60pt/[d]\\
\qquad\vdots\qquad \ar@/_40pt/[d]
  &
  & \qquad\vdots\qquad \ar@/_50pt/[d]
  &
  &
  &
  & \qquad\vdots\qquad \ar@/_60pt/[d] \\
m_{k_l+\beta _{l,i}+(\kappa +1)r_l-1} \ar `r[ru] `[uuur] [uuurr]
  &
  & m_{k_l+\beta _{l,i}+(\kappa +1)r_l-1}+1 \ar `r[ru] [uuur]
  &
  & \cdots \ar `r[ru] `[uuur] [uuurr]
  &
  & m_{k_l+\beta _{l,i}+(\kappa +1)r_l-1} +d_{l,i}-1
}
\]
\normalsize \baselineskip 17pt \parskip 5pt
fixes this block of coordinates. Therefore, the correspondence
\[
\prod _l \tau _l \mapsto
\left[\prod _l \prod _{i,\kappa} c_{m_{k_l},i,\kappa}\right]
\]
gives a group homomorphism $\Stab _{G[n]}(\gamma)\to
\prod _l \mathfrak S_{K^*_l}$ whose image is contained in
$\rho (G(\tilde q))$. On the other hand,
as the invariant functions $f_k$ determines the relative position of
all $p_k$'s on a component $\Delta ^{i_l,\circ}$,
a permutation in $\mathfrak S_{K^*_l}$ that fixes the point $\tilde q$
is necessarily a power of $[\prod _{i,\kappa} c_{m_{k_l},i,\kappa}]$,
as we saw in the previous example, therefore we get an isomorphims
$\Stab _{G[n]}(\gamma)\cong G(\tilde q)$.
\end{proof}

\begin{proof}[Conclusion of the proof of Theorem \ref{comparison theorem}]
Let us keep our assumptions on
\[
\gamma =\sum (p_i,p'_i) =\sum _k \mu _k (p_{m_k},p'_{m_k})\in \Sym ^n(S[n]/\mathbb A^{n+1})^{ss},
\]
and its lifting $\tilde \gamma\in W[n]^{ss}\times \mathbb A^n$.
Let us recall that the slice $\tilde V_{\tilde \gamma}$ is cut out by the equations
$w_j = w_j(\tilde \gamma _1)$.
More precisely, if we define a non-zero constant $c_j$ by
$c_j = w_j(\tilde \gamma _1)$, we have
\[
 t_j = \frac{v_{j,{j-1}}}{u_{j,{j-1}}}\frac{u_{jj}}{v_{jj}}= w_{n+j} \cdot c_j,
\]
This means that on $\tilde V_{\tilde \gamma}$, a variation of a point $(p_j,p'_j)$
is parametrized by the coordinate $t_j$ and the $j$-th coordinate of the factor
$\mathbb A^n$ of $W[n]^{ss}\times \mathbb A^n$.
Therefore, we know that $V_{\gamma}=\tilde V_{\tilde \gamma}/\Stab _{\mathfrak S_n}(\tilde \gamma)$
is locally isomoprhic to
\[
\left(\prod _k \Sym ^{\mu _k}(\mathbb A^2)\right) \times \mathbb A^1
\subset \Sym ^n(\mathbb A^2)\times \mathbb A^1.
\]
where the last factor $\mathbb A^1$ is the line with coordinate $t_{n+1}$.
Moreover the fiber product
\[
\hat V_{\gamma} =
\Hilb ^n(S[n]/\mathbb A^{n+1})\times_{\Sym ^n(S[n]/\mathbb A^{n+1})} V_{\gamma}\to V_{\gamma}
\]
is isomorphic to a restriction of the Hilbert-Chow morphism
$\Hilb ^n(\mathbb A^2)\times \mathbb A^1\to
\Sym ^n(\mathbb A^2)\times \mathbb A^1$ to $V_\gamma$.
By the universality of Hilbert scheme, the $G(\tilde q)$-equivariant
isomorphism $V_{\gamma}\to U_{\tilde q}$ lifts to an equivariant isomorphism
\[
\hat V_{\gamma} \overset{\sim}{\lto} \hat U_{\tilde q}.
\]
As $\hat V_{\gamma}$ gives a slice to a quotient $\Hilb ^n(S[n]/\mathbb A^{n+1})^{ss}/G[n]$,
this implies that we have an isomorphism of smooth Deligne-Mumford stacks
\[
\mathscr I^n_{S/C}\overset{\sim}{\lto} \mathscr Y^{(n)},
\]
over $Z^{(n)}\cong \Sym ^n(S[n]/\mathbb A^{n+1})^{ss}\git G[n]$,
which completes the proof of Theorem \ref{comparison theorem}.
\end{proof}

\paragraph{Acknowledgement}
The author would like to thank Dr. Ren\'e Birkner, the author of
a Macaulay2 package ``Polyhedra'' \cite{B}.
It would have been impossible for him to find many key observations
in this work without experimentats on the computer program.

%%%%%%%%%%%%%%%%%%%%%%%%%%%

\begin{bibdiv}
\begin{biblist}

\bib{B}{webpage}{
  author={Birkner, Ren\'e},
  title={Polyhedra -- for computations with convex polyhedra, cones, and fans},
  note={a package for computer algebra system Macaulay2},
%  url={http://www.math.uiuc.edu/Macaulay2/doc/Macaulay2-1.10/share/doc/Macaulay2/Polyhedra/html/},
}

\bib{CLS}{book}{
   author={Cox, David A.},
   author={Little, John B.},
   author={Schenck, Henry K.},
   title={Toric varieties},
   series={Graduate Studies in Mathematics},
   volume={124},
   publisher={American Mathematical Society, Providence, RI},
   date={2011},
   pages={xxiv+841},
   isbn={978-0-8218-4819-7},
%   review={\MR{2810322 (2012g:14094)}},
}

\bib{GHH}{article}{
   author={Gulbrandsen, Martin G.},
   author={Halle, Lars H.},
   author={Hulek, Klaus},
   title={A GIT construction of degenerations of Hilbert schemes of points},
   journal={preprint arXiv:1604.00215},
   date={2016},
}

\bib{H}{article}{
   author={Hu, Yi},
   title={Combinatorics and quotients of toric varieties},
   journal={Discrete Comput. Geom.},
   volume={28},
   date={2002},
   number={2},
   pages={151--174},
   issn={0179-5376},
   review={\MR{1920137}},
   doi={10.1007/s00454-002-2757-9},
}

\bib{KSZ}{article}{
   author={Kapranov, M. M.},
   author={Sturmfels, B.},
   author={Zelevinsky, A. V.},
   title={Quotients of toric varieties},
   journal={Math. Ann.},
   volume={290},
   date={1991},
   number={4},
   pages={643--655},
   issn={0025-5831},
%   review={\MR{1119943}},
   doi={10.1007/BF01459264},
}

\bib{Muk}{book}{
   author={Mukai, Shigeru},
   title={An introduction to invariants and moduli},
   series={Cambridge Studies in Advanced Mathematics},
   volume={81},
   note={Translated from the 1998 and 2000 Japanese editions by W. M.
   Oxbury},
   publisher={Cambridge University Press, Cambridge},
   date={2003},
   pages={xx+503},
   isbn={0-521-80906-1},
%   review={\MR{2004218}},
}

\bib{N08}{article}{
   author={Nagai, Yasunari},
   title={On monodromies of a degeneration of irreducible symplectic K\"ahler
   manifolds},
   journal={Math. Z.},
   volume={258},
   date={2008},
   number={2},
   pages={407--426},
   issn={0025-5874},
%   review={\MR{2357645}},
   doi={10.1007/s00209-007-0179-3},
}

\bib{N}{article}{
   author={Nagai, Yasuanri},
   title={Symmetric products of a semistable degeneration of surfaces},
   journal={preprint arXiv:1609.02306},
   date={2016},
}

\end{biblist}
\end{bibdiv}

\end{document}